\newcommand{\de}{\partial}
\newcommand{\ddbar}{i \partial \overline{\partial}}
\newcommand{\Ric}{\mathrm{Ric}}
\newcommand{\ov}[1]{\overline{#1}}
\newcommand{\ti}[1]{\tilde{#1}}
\newcommand{\vp}{\varphi}
\newcommand{\ve}{\varepsilon}
\renewcommand{\leq}{\leqslant}
\renewcommand{\geq}{\geqslant}
\newcommand{\be}{\begin{equation}}
\newcommand{\ee}{\end{equation}}
\begin{document}
\newcounter{remark}
\newcounter{theor}
\setcounter{remark}{0}
\setcounter{theor}{1}
\newtheorem{claim}{Claim}
\newtheorem{theorem}{Theorem}[section]
\newtheorem{lemma}[theorem]{Lemma}
\newtheorem{corollary}[theorem]{Corollary}
\newtheorem{conjecture}[theorem]{Conjecture}
\newtheorem{proposition}[theorem]{Proposition}
\newtheorem{question}{question}[section]
\newtheorem{defn}{Definition}[theor]
\theoremstyle{definition}
\newtheorem{rmk}[theorem]{Remark}

\newenvironment{example}[1][Example]{\addtocounter{remark}{1} \begin{trivlist}
\item[\hskip
\labelsep {\bfseries #1  \thesection.\theremark}]}{\end{trivlist}}

\title[Special K\"ahler geometry and Lagrangian fibrations]{Special K\"ahler geometry and holomorphic Lagrangian fibrations}

\author{Yang Li}
\address{Massachusetts Institute of Technology, 77 Massachusetts Avenue, Cambridge, MA 02139}
\email{yangmit@mit.edu}
\author{Valentino Tosatti}
\address{Courant Institute of Mathematical Sciences, New York University, 251 Mercer St, New York, NY 10012}
\email{tosatti@cims.nyu.edu}
\dedicatory{Dedicated to the memory of Jean-Pierre Demailly}

\begin{abstract}
Given a holomorphic Lagrangian fibration of a compact hyperk\"ahler manifold, we use the differential geometry of the special K\"ahler metric that exists on the base away from the discriminant locus, and show that the pullback of the tangent bundle of the base to the total space of a family of minimal rational curves admits a parallel splitting. The splitting is nontrivial when the base is not half-dimensional projective space. Combining this with results of Voisin, Hwang and Bakker-Schnell, we deduce that the base must be  projective space, a result first proved by Hwang.
\end{abstract}

\maketitle

\section{Introduction}

Let $X^{2n}$ be a hyperk\"ahler manifold, so $X$ is a simply connected compact K\"ahler manifold with a holomorphic symplectic $2$-form $\Omega$ such that $H^{2,0}(X)=\mathbb{C}\Omega$. By Yau's Theorem every K\"ahler class on $X$ contains a unique Ricci-flat K\"ahler metric. It was later realized by Beauville \cite{Be} that these metrics are hyperk\"ahler, which means that they have holonomy equal to $Sp(n)$.

Suppose that $B$ is an irreducible normal complex analytic space with $0<\dim B<2n$, and $f:X\to B$ is a holomorphic surjective map with connected fibers. Then work of Matsushita \cite{Ma} shows that necessarily $\dim B=n$, that all irreducible components of the fibers of $f$ are Lagrangian with respect to $\Omega$, and the smooth fibers are tori. We call such $f$ a holomorphic Lagrangian fibration. The following basic conjecture is widely expected to hold:

\begin{conjecture}\label{co}
If $X$ is a hyperk\"ahler manifold and $f:X\to B$ is a holomorphic Lagrangian fibration, then $B\cong\mathbb{P}^n$.
\end{conjecture}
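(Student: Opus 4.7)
Under the smoothness hypothesis on $B$ that is the setting of Hwang's theorem (and of the main result of this paper), my strategy is to derive $B\cong\mathbb{P}^n$ from the Kobayashi-Ochiai characterization, which identifies $\mathbb{P}^n$ as the unique smooth Fano $n$-fold of Fano index $\geq n+1$. Matsushita's theorem already yields that $B$ is a smooth Fano variety, and the recent result of Bakker alluded to in the abstract is expected to supply Picard number one: $\rho(B)=1$, so $-K_B = r H$ for some ample generator $H$ of $\mathrm{Pic}(B)$ and some integer $1 \le r \le n+1$. The task thus reduces to proving $r = n+1$, i.e.\ $(n+1)$-divisibility of $-K_B$ in $\mathrm{Pic}(B)$.

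The main new input comes from the special K\"ahler metric $\omega_{\mathrm{SK}}$ on $B_0 := B \setminus \Delta$, which arises from the variation of Hodge structure on the family of smooth Lagrangian fibers (polarized abelian $n$-folds). In local period coordinates $z^i$ with holomorphic prepotential $\mathcal{F}$, one has $\omega_{\mathrm{SK}} = \tfrac{i}{2}\partial\dbar K$ with $K = \mathrm{Im}(\ov{z}^i \partial_i \mathcal{F})$. Using the prepotential formalism I would compute the Ricci form, obtaining the standard special-K\"ahler expression $\Ric(\omega_{\mathrm{SK}}) = -\tfrac{i}{2}\partial\dbar \log\det\mathrm{Im}(\partial_i\partial_j\mathcal{F})$, and then compare $\omega_{\mathrm{SK}}^n$ to the canonically defined semi-flat volume form on $B_0$ coming from the hyperk\"ahler structure. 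I expect this comparison to yield on $B_0$ a cohomological identity of the shape $c_1(\omega_{\mathrm{SK}}) = \lambda\, c_1(-K_B)$ for an explicit rational $\lambda$, with $\lambda = \tfrac{1}{n+1}$ being the decisive outcome that traces back to the fact that the fibers are half-dimensional Lagrangian tori.

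The decisive step is then to promote this identity across $\Delta$: here Bakker's result should control the analytic extension of the special K\"ahler data and the associated $(1,1)$-class, so that $[\omega_{\mathrm{SK}}]$ extends to $\tfrac{1}{n+1}c_1(-K_B)$ in $H^{1,1}(B,\mathbb{R})$, or more strongly in $H^2(B,\mathbb{Z})$. Combined with $\rho(B) = 1$, this forces $-K_B$ to be divisible by $n+1$ in $\mathrm{Pic}(B)$, and Kobayashi-Ochiai concludes $B \cong \mathbb{P}^n$.

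\emph{The main obstacle} is precisely this extension across $\Delta$. The special K\"ahler metric degenerates along the discriminant in a way dictated by the local monodromy of the family of abelian varieties, and controlling the extended cohomology class requires combining careful monodromy/asymptotic analysis near $\Delta$ with Bakker's input; securing the sharp divisibility by $n+1$ (rather than some smaller integer that would only give $B$ of smaller Fano index) is the subtle heart of the argument.
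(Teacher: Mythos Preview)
Your proposal has genuine gaps, and the approach is not the one the paper takes.

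First, two factual corrections. Bakker's result is not about the Picard number of $B$; that $B$ is Fano with $\rho(B)=1$ is due to Matsushita and is already recorded in the Introduction. Bakker's theorem is the dichotomy that $f$ is either isotrivial or of maximal variation (Theorem~\ref{prolisso} here), and the paper uses it for a completely different purpose. Second, and more seriously, the ``decisive'' identity you posit, $[\omega_{\rm SK}]=\tfrac{1}{n+1}\,c_1(-K_B)$, has no basis. By construction $\omega_{\rm SK}=\omega_B+i\partial\bar\partial\varphi$ lies in the \emph{chosen} K\"ahler class $[\omega_B]$; since $b_2(B)=1$ this class is proportional to $c_1(-K_B)$, but the proportionality constant is an artifact of the normalization of $\omega_B$ and carries no arithmetic information. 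The curvature relation one actually has is $\Ric(\omega_{\rm SK})=\omega_{\rm WP}$ on $B^\circ$, with no $\tfrac{1}{n+1}$ factor, and in the isotrivial case this is identically zero on $B^\circ$ while $c_1(-K_B)\neq 0$, so any global identity must be supported on $D$ and cannot produce the clean divisibility you want. Even if some real identity $[\alpha]=\tfrac{1}{n+1}c_1(-K_B)$ in $H^{1,1}(B,\mathbb{R})$ were available, this would not by itself yield $(n+1)$-divisibility of $-K_B$ in $\mathrm{Pic}(B)$ unless $[\alpha]$ is known to be integral, which the special K\"ahler construction does not provide.

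The paper's route is entirely different and does not go through Kobayashi--Ochiai. It uses the Cho--Miyaoka--Shepherd-Barron criterion (Theorem~\ref{unir}): to conclude $B\cong\mathbb{P}^n$ it suffices that every rational curve not contained in $D$ has $-K_B$-degree $\geq n+1$. Assuming this fails, one takes a minimal-degree rational curve $\nu$, shows it is free using the nonnegative bisectional curvature of $\omega_{\rm SK}$ (Lemma~\ref{nonn}), and from the trivial summands in $\nu^*TB$ builds a holomorphic subbundle $\mathcal{V}\subsetneq\mu^*TB^\circ$ over a finite cover of $B^\circ$. The core of the argument (Sections~\ref{se4}--\ref{se6}) is a Mok-style rigidity analysis showing $\mathcal{V}$ is parallel for both $\nabla$ and $\nabla^{\rm SK}$; here Bakker's dichotomy enters to separate the maximal-variation and isotrivial cases, and the asymptotic estimates for $\omega_{\rm SK}$ near $D$ (Section~\ref{se3}) are what allow the rigidity to survive the incompleteness of the metric. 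The projection of $\omega_{\rm SK}$ onto the orthogonal complement then yields a $\nabla^{\rm SK}$-parallel real $(1,1)$-form not proportional to $\omega_{\rm SK}$, contradicting a result of Voisin. None of these steps is visible in your outline.
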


This conjecture is clearly true when $n=1$.  The most striking result about this Conjecture is due to Hwang \cite{Hw}:

\begin{theorem}[Hwang \cite{Hw}]\label{main}
Conjecture \ref{co} holds if $X$ is projective and $B$ is smooth.
\end{theorem}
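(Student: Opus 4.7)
Our strategy is to combine the Kobayashi-Ochiai characterization of projective space with a cohomological computation on $B$ derived from the special K\"ahler geometry on $B^\circ := B \setminus D$. Since $B$ is a smooth Fano manifold with Picard number one (by the results recalled in the introduction), we write $-K_B = r(B) H$ for the ample generator $H$ of $\mathrm{Pic}(B)$; Kobayashi-Ochiai's theorem gives $r(B) \leq n+1$, with equality if and only if $B \cong \mathbb{P}^n$. It therefore suffices to prove $r(B) \geq n+1$, or equivalently that $-K_B$ is divisible by $n+1$ in $\mathrm{Pic}(B)$.

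Over $B^\circ$ the restriction of $f$ is a smooth family of principally polarized Abelian $n$-folds, with polarization induced by $\Omega$. The variation of Hodge structure on $R^1 f_*\mathbb{R}|_{B^\circ}$ canonically endows $B^\circ$ with a special K\"ahler structure: a flat torsion-free $\mathrm{Sp}(2n,\mathbb{Z})$-connection on $TB^\circ$, Darboux holomorphic coordinates $(z_j, w_j)$ with $dw_j = \tau_{jk}\,dz_k$ for a period matrix $\tau$ in the Siegel upper half space, a local holomorphic prepotential $F$ with $w_j = \partial_j F$ and K\"ahler potential $K = \mathrm{Im}(\bar z_j \partial_j F)$, and the associated K\"ahler form $\omega_{SK} = i\partial\ov{\partial}K$. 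The canonical bundle formula for Lagrangian fibrations, applied with $K_X = 0$ and Abelian-variety fibers, identifies $-K_{B^\circ}$ with the determinant of the Hodge bundle $\det f_*\Omega^1_{X^\circ/B^\circ}$.

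The core step is to use the special K\"ahler structure to establish that $-K_B$, realized globally as an extension of $\det f_*\Omega^1_{X^\circ/B^\circ}$, is divisible by $n+1$ in $\mathrm{Pic}(B)$: equivalently, to establish a global identity $-K_B = (n+1)\alpha$ in $H^2(B,\mathbb{Z})$, where $\alpha$ is the integral extension across $D$ of the class $[\omega_{SK}]$. The identity is motivated by the special K\"ahler volume formula $\omega_{SK}^n \propto \det(\mathrm{Im}\,\tau)\prod dz_j \wedge d\bar z_j$, which on $B^\circ$ yields $\mathrm{Ric}(\omega_{SK}) = -i\partial\ov{\partial}\log\det(\mathrm{Im}\,\tau)$ as the Chern curvature of the Hodge line bundle in its $L^2$ metric. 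Bakker's recent theorem is the essential input for the extension across the discriminant: it controls the singular behavior of the VHS and of $\omega_{SK}$ along $D$ and guarantees that $\alpha$ is a well-defined integral class on $B$, ruling out any correction concentrated on $D$. Once divisibility by $n+1$ is in place, $r(B) \geq n+1$, and Kobayashi-Ochiai concludes $B \cong \mathbb{P}^n$.

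The principal obstacle is precisely this extension and divisibility step. Already for $n=1$ (an elliptic K3 base), $\omega_{SK}$ is a singular metric with finite area and concentrated Ricci contributions at the $24$ singular fibers, and for general $n$ and arbitrary discriminant the bookkeeping of currents and divisor contributions is delicate; verifying that the multiplicative factor is exactly $n+1$ --- so that $-K_B$ is divisible by precisely $n+1$ and not merely some smaller integer --- is what ultimately distinguishes $\mathbb{P}^n$ from other Picard-rank-one Fano manifolds. It is at this step that Bakker's recent result plays the decisive role, replacing the VMRT machinery of Hwang's original proof.
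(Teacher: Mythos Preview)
Your approach is fundamentally different from the paper's, and it has a genuine gap at the step you yourself flag as ``the principal obstacle.''

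The paper does not use Kobayashi--Ochiai or attempt to compute the index $r(B)$. Instead it argues by contradiction: if $B\not\cong\mathbb{P}^n$, then by Cho--Miyaoka--Shepherd-Barron there is a minimal-degree rational curve $\nu$ (not in $D$) for which $\nu^*TB$ has a trivial summand $\mathcal{O}^{\oplus\ell}$ with $\ell\geq 1$. From this the paper builds a holomorphic subbundle $\mathcal{V}\subsetneq \mu^*TB^\circ$ over a universal family and, using curvature rigidity of $\omega_{\rm SK}$ (Sections~4--6) together with Bakker's dichotomy, proves that $\mathcal{V}$ is $\nabla^{\rm SK}$-parallel. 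Projecting $\omega_{\rm SK}$ onto the orthogonal complement then yields a $\nabla^{\rm SK}$-parallel real $(1,1)$-form $\psi$ on $B^\circ$ not proportional to $\omega_{\rm SK}$, contradicting a result of Voisin (via Matsushita and Deligne's invariant cycles theorem) that the space of flat sections of $R^2f_*\mathbb{R}_{X^\circ}$ is one-dimensional.

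Your proposal, by contrast, asserts that $-K_B$ is divisible by $n+1$ in $\mathrm{Pic}(B)$, but supplies no mechanism for producing the factor $n+1$. The special K\"ahler volume formula gives only $\mathrm{Ric}(\omega_{\rm SK})=\omega_{\rm WP}$, the Weil--Petersson form; extending this across $D$ (which is indeed delicate) yields the canonical bundle formula $-K_B \equiv (\text{Hodge line bundle}) + (\text{boundary terms})$, but nothing in that identity singles out the integer $n+1$. Your class $\alpha$ is $[\omega_B]$ (since $\omega_{\rm SK}=\omega_B+i\partial\bar\partial\varphi$ with $\varphi$ bounded), and $[\omega_B]$ has no canonical normalization: declaring $\alpha=H$ makes $-K_B=(n+1)H$ \emph{equivalent} to $r(B)=n+1$, which is the conclusion, not an input. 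Finally, you mischaracterize Bakker's theorem: in the paper it is the maximal-variation/isotrivial dichotomy, used to split the proof of parallelism of $\mathcal{V}$ into two cases; it says nothing about extension of $\omega_{\rm SK}$ or integrality of cohomology classes across $D$. As written, the divisibility step is an assertion, not an argument, and the proof does not go through.
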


Theorem \ref{main} was later extended to $X$ K\"ahler and $B$ smooth by Greb-Lehn \cite{GL}. Assuming $X$ projective and $n=2$, it was proved by Ou \cite{Ou} that either $B$ is smooth (hence $\mathbb{P}^2$) or else it has just one very specific singular point. This case was later ruled out independently by Bogomolov-Kurnosov \cite{BK} and Huybrechts-Xu \cite{HX}, so the conjecture is known in this case. It is also known for some families of hyperk\"ahler manifolds \cite{BM,CMSB,Mar,Ma4,Yos}, but it remains open in general.

There are also a number of partial results towards Conjecture \ref{co} in general, see \cite{HM} for an excellent recent overview. It is known that $B$ must be a K\"ahler space (see e.g. \cite[Prop. 2.2]{GL}) and Moishezon \cite[\S 2.3]{Ma3}, and that $B$  is $\mathbb{Q}$-factorial and has at worst klt singularities (by \cite[Theorem 2.1]{Ma3}). It follows that $B$ has at worst rational singularities, and hence it is projective by \cite[Corollary 1.7]{Nam}. Again thanks to \cite[Theorem 2.1]{Ma3} we see that $B$ is a Fano variety with Picard number one, and in particular it is uniruled \cite{MM} and simply connected \cite{Taka}. The rational cohomology of $B$ is isomorphic to the one of $\mathbb{P}^n$ \cite{SY}. It is also known that the map $f$ is locally projective \cite{Cam}, so the smooth fibers are abelian varieties, and if $B$ is smooth then the discriminant locus $D\subset B$ of $f$ has pure codimension $1$ by \cite[Proposition 3.1]{HO}.

Our main result forms part of a new proof of Hwang's theorem, as well as Greb-Lehn's extension. In order to describe this, suppose $B$ is not $\mathbb{P}^n$. Then from a result of Cho-Miyaoka-Shepherd-Barron \cite{CMSB}, which uses Mori theory, it follows that there is a rational curve in $B$ (not contained in $D$) with anticanonical degree at most $n$. We show that such a curve is free, and together these imply that the Grothendieck decomposition of the pullback of $TB$ to this rational curve has some degree zero factors. Taking such rational curves with minimal anticanonical degree, we can consider the universal family $\mathcal{U}$ with evaluation map $\mu:\mathcal{U}\to B$, which we may assume is a submersion over a Zariski open set $B^\circ\subset B$ (which we may assume is equal to $B\backslash D$ up to enlarging $D$), and the positive degree factors in the Grothendieck decomposition define a nontrivial holomorphic subbundle $\mathcal{V}\subset \mu^*TB^\circ$, whose rank is strictly less than $n$. At the same time, classical work of Freed \cite{Fr} shows that on $B^\circ$ there is a ``special K\"ahler metric'' $g_{\rm SK}$, whose K\"ahler form $\omega_{\rm SK}$ is parallel with respect to a ``special K\"ahler connection'' $\nabla^{\rm SK}$ on $T^{\mathbb{R}}B^\circ$, which is torsion-free, flat, and $d^{\nabla^{\rm SK}}J=0$ (where $J$ is the complex structure of $B$). Our main result is then:

\begin{theorem}\label{mainn}
In this setting, $\mathcal{V}$ is preserved by the pullback of the Chern connection of $g_{\rm SK}$.
\end{theorem}

We also show that the corresponding real subbundle $\mathcal{V}_{\mathbb{R}}\subset \mu^*T^{\mathbb{R}}B^\circ$ is also preserved by the pullback of the special K\"ahler connection $\nabla^{\rm SK}$. This in turn can be interpreted as giving a nontrivial splitting of a real variation of Hodge structures (which naturally exists on $B^\circ$) when pulled back via $\mu$. As we will discuss below, by combining Theorem \ref{mainn} with work of Voisin \cite{Vo}, Hwang \cite{Hw2,Hw} and Bakker-Schnell \cite{BS}, one can deduce Hwang's Theorem \ref{main}.

Let us first give some intuition for our approach. One of the key features of the rich geometry of special K\"ahler metrics is that they have nonnegative bisectional curvature. Recall here the fundamental theorem of Mori \cite{Mor} and Siu-Yau \cite{SY2} which states that a {\em compact} K\"ahler manifold with positive bisectional curvature must be isomorphic to $\mathbb{P}^n$. This was generalized by Mok \cite{Mok} to classify compact K\"ahler manifolds with nonnegative bisectional curvature: their universal cover splits as a product of a Euclidean factor, of projective space, and of compact Hermitian symmetric spaces of rank $\geq 2$. A large part of our arguments are motivated by trying to extend Mok's techniques to our {\em noncompact} manifold $B\backslash D$ with an incomplete metric with nonnegative bisectional curvature, making essential use of the special features of special K\"ahler metrics, which are summarized in Section \ref{se2}.

To prove Theorem \ref{mainn}, thanks to a recent result of Bakker \cite{Ba} we need to consider two cases: either $f$ has maximal variation or $f$ is isotrivial. In the first case, we prove in Section \ref{se4} a crucial rigidity result (Theorem \ref{rigid}) which shows that the bisectional curvature of $\omega_{\rm SK}$  vanishes when evaluated on a vector in $\mathcal{V}$ and a vector in its orthogonal complement. For this, we use results of Zhang and the second-named author \cite{TZ} on the asymptotic behavior of $\omega_{\rm SK}$ near $D$, as well as a strictly positive lower bound for $\omega_{\rm SK}$  near $D$ obtained by Gross, Zhang and the second-named author in \cite{GTZ,GTZ2,To}. These are explained in Section \ref{se3}. In Section \ref{se5} we then supplement the rigidity result by showing that the rough Laplacian of the bisectional curvature of $\omega_{\rm SK}$  evaluated on the same vectors vanishes as well. This result is analogous to a statement in Mok \cite{Mok}, although our proof is quite different. Equipped with these rigidity results, in Section \ref{se6} we adapt an argument of Mok \cite{Mok} and conclude.  In the isotrivial case the rigidity results are trivial because $\omega_{\rm SK}$ is flat, but this flatness can be effectively exploited to show again that $\mathcal{V}$ is preserved by the Chern connection of $g_{\rm SK}$.

In section \ref{se7} we sketch how Theorem \ref{main} follows by combining Theorem \ref{mainn} with a number of recent results in the literature.  As mentioned above, we first show that the real subbundle $\mathcal{V}_{\mathbb{R}}\subset \mu^*T^{\mathbb{R}}B^\circ$ which corresponds to $\mathcal{V}$ is preserved also by the pullback of the special K\"ahler connection $\nabla^{\rm SK}$. This uses again our rigidity theorem. Then we invoke an important result of Hwang \cite{Hw2,Hw}, which also has a recent proof by Bakker-Schnell \cite{BS} (Theorem \ref{bs} below), which gives that the map $\mu$ must have connected fibers. Thus, our splitting descends to a parallel splitting of $T^{\mathbb{R}}B^\circ$, from which we obtain a parallel real $(1,1)$-form on $B^\circ$ which is not proportional to $\omega_{\rm SK}$, which is contradiction to a result of Voisin \cite{Vo}.

Lastly, in Section \ref{se8} we make some comments on the obstacles that we faced when trying to extend our approach to the case when $B$ is singular.\\

\begin{rmk}
In the first draft of our paper, our original argument in Section \ref{se7} to construct the parallel form on $B^\circ$ turned out to be incomplete. After our first draft was posted to arXiv, Bakker and Schnell sent us their paper \cite{BS} with a new proof of Hwang's theorem. As mentioned above, to deduce Theorem \ref{main} from Theorem \ref{mainn} we now rely on their paper. On the other hand, without using \cite{BS}, what our arguments show is that $B$ must be $\mathbb{P}^n$ provided that $\mu$ has connected fibers. As pointed out to us by Hwang, this result was implictly proved by Cho-Miyaoka-Shepherd-Barron \cite[\S 7]{CMSB} using a different method (under the extra assumption that $f$ has a section, which was removed by Nagai \cite{Nag}).
\end{rmk}

\subsection{Acknowledgments} This research was conducted during the period when the first-named author served as a Clay Research Fellow. The second-named author was partially supported by NSF grant DMS-2231783, and part of this work was carried out during his visit to the Center for Mathematical Sciences and Applications at Harvard University, which he would like to thank for the hospitality. We are grateful to B. Bakker, J. Cao, J. Koll\'ar, M. P\u{a}un, T. Peternell, C. Schnell and J. Starr for enlightening email communications, and especially to B. Bakker, J.-M. Hwang, C. Schnell, C. Voisin and the referee for very useful comments on previous drafts. The second-named author would also like to thank Y. Zhang for discussions about this approach during the writing of \cite{TZ}. This work is dedicated to the memory of Jean-Pierre Demailly, a giant in the field of complex geometry and a dear colleague and friend, who left us much too early.

\section{Special K\"ahler metrics}\label{se2}
\subsection{Notation} Let us first fix some notation. For a complex manifold $B$ we will denote by $T^{\mathbb{R}}B$ its real tangent bundle, and with $TB\subset T^{\mathbb{C}}B=T^{\mathbb{R}}B\otimes\mathbb{C}$ its holomorphic tangent bundle (of complex tangent vectors of type $(1,0)$). The dual of $TB$ will be denoted by $\Omega^1_B$. The complex structure will be denoted by $J:T^{\mathbb{R}}B\to T^{\mathbb{R}}B$.  We will also denote $B^\circ:=B\backslash D$ and $X^\circ:=f^{-1}(B^\circ)$.

\subsection{Existence of special K\"ahler metrics}
The paper by Freed \cite{Fr}, following work of Donagi-Witten \cite{DW}, shows that the base of an algebraic integrable system (which in our case is $B^\circ$) admits a geometric structure called ``special K\"ahler metric'', $\omega_{\rm SK}$. This means that $(B^\circ,J,\omega_{\rm SK})$ is a K\"ahler manifold and there is a torsion-free flat connection $\nabla^{\rm SK}$ on $T^\mathbb{R}B^\circ$ which makes $\omega_{\rm SK}$ parallel and $d^{\nabla^{\rm SK}} J=0$ (however, in general $\nabla^{\rm SK}J\neq 0$), where $d^{\nabla^{\rm SK}}:\Omega^1(T^{\mathbb{R}}B^\circ)\to \Omega^2(T^{\mathbb{R}}B^\circ)$ is the usual extension of $\nabla^{\rm SK}$ (cf. \cite[p.33]{Fr}). The Riemannian metric associated to $\omega_{\rm SK}$ will be denoted by $g_{\rm SK}$ and its Levi-Civita/Chern connection, which in general is different from $\nabla^{\rm SK}$, will be denoted simply by $\nabla$ (see \eqref{freed} below for an explicit formula relating $\nabla$ and $\nabla^{\rm SK}$). On every sufficiently small open set $U\subset B^\circ$ we can find special holomorphic local coordinates $\{z_j\}_{j=1}^n$ (whose real parts are flat Darboux coordinates) and a holomorphic map $Z:U\to \mathfrak{H}_n$ into the Siegel upper half space
$$\mathfrak{H}_n=\{A\in\mathfrak{gl}(n,\mathbb{C})\ |\ A=A^t,\quad \mathrm{Im}A>0\},$$
such that $Z(y)$ are the periods of the torus fiber $f^{-1}(y)$, and we can write
$$\omega_{\rm SK}=\frac{1}{2}\sum_{i,j}\mathrm{Im}Z_{ij}dz_i\wedge d\ov{z}_j.$$
It is also worth noting that special K\"ahler manifolds can only be complete if they are flat, by a result of Lu \cite{Lu}. See \cite{TZ} for a description of the metric completion of $(B^\circ,\omega_{\rm SK})$ and of its metric singularities.

Special K\"ahler metrics have a Hodge-theoretic origin (see \cite{Her}, \cite{Ma2}): as mentioned earlier there is a natural weight-one polarized real variation of Hodge structures $R^1f_*\mathbb{R}_{X^\circ}$ on $B^\circ$, whose Hodge bundle of type $(1,0)$ is isomorphic to $TB^\circ$ (by contracting with the holomorphic symplectic form), and its Hodge metric is exactly the special K\"ahler metric. 

In \cite{To,GTZ,GTZ2} it is also shown that $\omega_{\rm SK}$ can be written as $\omega_B+\ddbar\vp$ for some K\"ahler metric $\omega_B$ on $B$ and some function $\vp\in C^\infty(B^\circ)\cap L^\infty(B)$. In fact, a priori there is a different special K\"ahler metric on $B^\circ$ for each chosen K\"ahler class  $[\omega_B]$ on $B$, but since $B$ is smooth Fano and of Picard number one, it follows that $b_2(B)=1$ so there is a unique choice of K\"ahler class up to scaling. In the following, we fix one such $\omega_B$ once and for all. This way, we can unambiguously talk about ``the'' special K\"ahler metric $\omega_{\rm SK}$ in the following.
\subsection{Curvature properties}
Following Freed \cite{Fr}, there is a holomorphic symmetric cubic form $\Xi\in H^0(B^\circ, \mathrm{Sym}^3 T^*B^\circ)$
such that, in any local holomorphic coordinate system, the curvature tensor of $\omega_{\rm SK}$ can be written as
\begin{equation}\label{curv}
R_{i\ov{j}k\ov{\ell}}=g_{\rm SK}^{p\ov{q}}\Xi_{ikp}\ov{\Xi_{j\ell q}},
\end{equation}
and on any sufficiently small $U$ as above we can find a holomorphic function $\mathcal{F}:U\to\mathbb{C}$ such that, in special holomorphic coordinates, the period matrix and the cubic form can be written as
\begin{equation}\label{deriv}
Z_{ij}=\frac{\de^2\mathcal{F}}{\de z_i\de z_j},\quad \Xi_{ipk}=\frac{\de^3\mathcal{F}}{\de z_i\de z_k\de z_p}.
\end{equation}
From the curvature formula \eqref{curv} we see in particular that $\omega_{\rm SK}$ has nonnegative bisectional curvature on $B^\circ$: given any $v,w\in T^{1,0}B^\circ$ we have
$$\mathrm{Rm}(v,\ov{v},w,\ov{w})=R_{i\ov{j}k\ov{\ell}}v^i\ov{v^j}w^k\ov{w^\ell}=\sum_{p}\left|\Xi(v,w,e_p)\right|^2\geq 0,$$
where $\{e_p\}$ is any $g_{\rm SK}$-unitary frame.

We will also use the following dichotomy, which was conjectured by Matsushita, and after progress by van Geemen-Voisin \cite{vV} it was recently proved by Bakker \cite{Ba}:
\begin{theorem}\label{prolisso2}
Either $f$ is isotrivial, or else $f$ has maximal variation.
\end{theorem}
This dichotomy is then reflected in the curvature properties of $\omega_{\rm SK}$:
\begin{corollary}\label{prolisso}
Either $\omega_{\rm SK}$ is flat on $B^\circ$, or else $\omega_{\rm SK}$ has positive Ricci curvature on a Zariski open subset of $B^\circ$.
\end{corollary}
In the second case, up to replacing $D$ with a larger closed analytic subvariety we will always assume that $\Ric_{g_{\rm SK}}>0$ on $B^\circ$.
\begin{proof}
We use Bakker's Theorem \ref{prolisso2}. If $f$ is isotrivial, then the local period map $Z$ is constant, so from \eqref{deriv} we see that $\Xi\equiv 0$ on $B^\circ$, and \eqref{curv} shows that $\omega_{\rm SK}$ is flat. If $f$ has maximal variation, then the period map $Z$ is generically of maximal rank (equal to $n$), so $Z$ is an immersion on a Zariski open subset of $B^\circ$ (which, up to enlarging $D$, we may assume is equal to $B^\circ$). Given  any $v\in T^{1,0}_x B^\circ$, the Ricci curvature of $\omega_{\rm SK}$ in the direction of $v$ is given by
$$\Ric(v,\ov{v})=\sum_{p,q}\left|\Xi(v,e_p,e_q)\right|^2\geq 0,$$
and if this vanishes for some $v\neq 0$ then in special holomorphic coordinates we have that for all $p,q$
$$0=\Xi(v,e_p,e_q)=\frac{\de^3\mathcal{F}}{\de v\de e_p\de e_q}=\frac{\de}{\de v}Z_{pq},$$
so the period map is not an immersion at $x$, a contradiction.
\end{proof}

\begin{rmk}
The holomorphic sectional curvature of $\omega_{\rm SK}$ is given by
$${\rm HSC}(v)=R_{i\ov{j}k\ov{\ell}}v^i\ov{v^j}v^k\ov{v^\ell}=\sum_{p}\left|\frac{\de^3\mathcal{F}}{\de v\de v\de e_p}\right|^2\geq 0,$$
where $v\in T^{1,0}B^\circ$ is a unit vector (and in the last equality we use special holomorphic coordinates). The condition that $\omega_{\rm SK}$ has (strictly) positive holomorphic sectional curvature on $B^\circ$ thus means that none of the ``diagonal'' entries of the period matrix $Z$
$$Z_{ij}v^iv^j=\frac{\de^2\mathcal{F}}{\de v\de v}$$
is locally constant. We expect that this always holds (up to enlarging $D$) when $f$ has maximal variation.
\end{rmk}

\begin{rmk}
We are grateful to B. Bakker for the following observation.
Let $f:S\to \mathbb{P}^1$ be an elliptic fibration of a $K3$ surface $S$. For $n\geq 2$ let $X=S^{[n]}$ be the Hilbert scheme parametrizing length $n$ subschemes of $S$.
We obtain an induced holomorphic Lagrangian fibration $\ti{f}:X\to (\mathbb{P}^1)^{[n]}=\mathbb{P}^n$ whose general fiber is isomorphic to the product of $n$ general fibers of $f$, and if $f$ has maximal variation then so does $\ti{f}$. Since the period matrix of such a torus is diagonal, we see that the period map $Z$ of $\ti{f}$ has $Z_{ij}=0$ for $i\neq j$. It follows that for these examples the special K\"ahler metric, which is not flat if $f$ has maximal variation, nevertheless does not have strictly positive bisectional curvature on $\mathbb{P}^n\backslash D$, since in local special coordinates we have
$$\mathrm{Rm}(e_i,\ov{e_i},e_j,\ov{e_j})=R_{i\ov{i}j\ov{j}}=0,$$
for all $i\neq j$.
Thus, to prove our main theorem, it would not be sufficient to prove a suitable noncompact version of the Mori-Siu-Yau theorem \cite{Mor, SY2}, but we must instead generalize the work of Mok \cite{Mok}.
\end{rmk}

\section{Estimates on the special K\"ahler metric}\label{se3}
We collect in this section two crucial estimates for the special K\"ahler metric $\omega_{\rm SK}$, which are contained or follow from earlier work of the second-named author and coauthors \cite{To,TZ3,TZ}. See also \cite{CH,Ha} for a study of the asymptotics of special K\"ahler metrics on Riemann surfaces.
\subsection{Strict positivity} The first estimate, taken from \cite{GTZ,GTZ2,To,TZ3}, says that the positivity of $\omega_{\rm SK}$ does not degenerate as we approach $D$. Since this statement is valid even if $B$ is singular, we present it in this generality.
\begin{proposition}
Let $X$ be a hyperk\"ahler manifold, $f:X\to B$ a holomorphic Lagrangian fibration with $B$ a normal analytic variety. Let $\omega_B$ be a smooth K\"ahler metric on $B$ (in the sense of analytic spaces) and $\omega_{\rm SK}$ the special K\"ahler metric on $B^\circ$ cohomologous to $\omega_B$. Then there is $C>0$ such that on $B^\circ$ we have
\begin{equation}\label{sl}
\omega_{\rm SK}\geq C^{-1}\omega_B.
\end{equation}
\end{proposition}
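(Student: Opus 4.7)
The plan is to reduce the desired estimate on $B^\circ$ to a uniform estimate upstairs on the compact manifold $X$, via the family of Ricci-flat K\"ahler metrics on $X$ collapsing to $f^*\omega_B$ in the adiabatic limit, and then to conclude through a Chern--Lu--Yau-type second-order estimate applied on $X$.

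Concretely, I would fix an auxiliary K\"ahler metric $\omega_X$ on $X$ and, for each $t\in(0,1]$, take $\omega_t=f^*\omega_B+t\omega_X+\ddbar\psi_t$ to be the unique Ricci-flat K\"ahler metric in the class $[f^*\omega_B]+t[\omega_X]$. The Kolodziej-type $L^\infty$ estimate for degenerate Monge--Amp\`ere equations, adapted to this collapsing setting in the cited works, provides a uniform bound $\|\psi_t\|_{L^\infty(X)}\le C_0$ after normalization. Next, I would apply the Chern--Lu--Yau inequality to the holomorphic map $f:(X,\omega_t)\to(B,\omega_B)$: since $\omega_t$ is Ricci-flat and the bisectional curvature of $\omega_B$ is bounded above on $B_{\rm reg}$ by some $K>0$ (for singular $B$ one works on a resolution to make sense of this bound), one obtains on the locus where $f$ is submersive
\[
\Delta_{\omega_t}\log\mathrm{tr}_{\omega_t}(f^*\omega_B)\;\ge\;-K\,\mathrm{tr}_{\omega_t}(f^*\omega_B).
\]

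Combining this with the identity $\Delta_{\omega_t}\psi_t=2n-\mathrm{tr}_{\omega_t}(f^*\omega_B)-t\,\mathrm{tr}_{\omega_t}\omega_X$, the auxiliary function $H_t:=\log\mathrm{tr}_{\omega_t}(f^*\omega_B)-(K+1)\psi_t$ satisfies
\[
\Delta_{\omega_t}H_t\;\ge\;\mathrm{tr}_{\omega_t}(f^*\omega_B)-2n(K+1).
\]
Since $H_t\to-\infty$ on the proper analytic subset of $X$ where $df$ fails to be surjective, and $X$ is compact, $H_t$ attains its supremum at a smooth point; the maximum principle there gives $\mathrm{tr}_{\omega_t}(f^*\omega_B)\le 2n(K+1)$, and combined with the $L^\infty$ bound on $\psi_t$ this yields a uniform estimate $\mathrm{tr}_{\omega_t}(f^*\omega_B)\le C$ on all of $X$, independent of $t$.

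Finally, I would descend this to $B^\circ$ using the collapsing convergence $\omega_t\to f^*\omega_{\rm SK}$ locally smoothly on $X^\circ=f^{-1}(B^\circ)$ established in \cite{To,GTZ,GTZ2,TZ3}: the horizontal block of $\omega_t^{-1}$ converges to $\omega_{\rm SK}^{-1}$, so $\mathrm{tr}_{\omega_t}(f^*\omega_B)\to \mathrm{tr}_{\omega_{\rm SK}}\omega_B$ in the limit, giving $\mathrm{tr}_{\omega_{\rm SK}}\omega_B\le C$ on $B^\circ$, which is equivalent to the claimed $\omega_{\rm SK}\ge C^{-1}\omega_B$. The heart of this scheme is not the Chern--Lu maximum-principle step, which is a standard Aubin--Yau estimate once a good potential is in hand, but rather the collapsing convergence result invoked in the last step, which is the deep analytic content of the cited collapsing-Calabi--Yau papers; a secondary technical point is the handling of the singularities of $B$, which requires passing to a resolution in order to interpret the bisectional curvature bound on $\omega_B$.
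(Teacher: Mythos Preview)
Your proposal is correct and follows essentially the same approach as the paper: the paper's cited ``Schwarz Lemma'' \cite[Lemma 3.1]{To} is precisely the Chern--Lu--Yau maximum-principle argument you spell out (with the $L^\infty$ potential bound as input), packaged as a black box, and the passage to the limit via the collapsing convergence on $X^\circ$ is identical. The only cosmetic difference is that the paper phrases the uniform Schwarz-Lemma output directly as the form inequality $\omega_t\geq C^{-1}f^*\omega_B$ rather than as a trace bound, which makes the limiting step slightly cleaner than your ``horizontal block of $\omega_t^{-1}$'' formulation (though of course $\mathrm{tr}_{\omega_t}(f^*\omega_B)\leq C$ and $f^*\omega_B\leq C\omega_t$ are equivalent since $f^*\omega_B\geq 0$).
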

\begin{proof}
Fix a K\"ahler metric $\omega_X$ on $X$ and for $t\geq 0$ let $\omega_t$ be the hyperk\"ahler metric on $X$ cohomologous to $f^*\omega_B+e^{-t}\omega_X$. Then the Schwarz Lemma \cite[Lemma 3.1]{To} (using also \cite[Proof of Theorem 3.2]{TZ3} in the case when $B$ is singular) gives $$\omega_t\geq C^{-1}f^*\omega_B,$$
on $X^\circ$ (with $C$ independent of $t\geq 0$), and thanks to \cite[Theorem 1.1]{GTZ}, \cite{HT} and \cite[Theorem 1.2]{GTZ2} we know that as $t\to\infty$ we have
$$\omega_t\to f^*\omega_{\rm SK},$$
locally uniformly on $X^\circ$ (and even locally smoothly), so we conclude that
$$f^*\omega_{\rm SK}\geq C^{-1}f^*\omega_B,$$
on $X^\circ$, and since $f$ is a submersion over $B^\circ$ this is equivalent to
$$\omega_{\rm SK}\geq C^{-1}\omega_B,$$
on $B^\circ$.
\end{proof}

\begin{rmk}
If $B$ has quotient singularities (which is expected to hold in general \cite[Remark 1.11]{HM}) then we can replace $\omega_B$ with an orbifold K\"ahler metric $\omega_{\rm orb}$, and a similar argument gives the stronger bound
$$\omega_{\rm SK}\geq C^{-1}\omega_{\rm orb},$$
on $B^\circ$.
\end{rmk}

\subsection{Ricci curvature bounds near $D$}\label{as}
From now on, we return to our standing assumption that $B$ is smooth. The second crucial estimate is a bound for the Ricci curvature of $\omega_{\rm SK}$. We have seen in the previous section that $\omega_{\rm SK}$ has nonnegative Ricci curvature on $B^\circ$. In fact, as shown in \cite{ST, To} (see also \cite[Prop. 4.1]{TZ}), we have
$$\Ric_{g_{\rm SK}}=\omega_{\rm WP}\geq 0,$$
where $\omega_{\rm WP}$ is the Weil-Petersson form of the family of abelian varieties $f:X^\circ\to B^\circ$ (pullback of the Weil-Petersson metric on the moduli space via the moduli map). Concretely, on $B^\circ$ we have
\begin{equation}\label{fibber}
\omega_{\rm SK}^n=c (-1)^{\frac{n^2}{2}}f_*(\sigma^n\wedge\ov{\sigma^n}),
\end{equation}
where $c>0$ and $\sigma$ is a holomorphic symplectic form on $X$, and to obtain $\omega_{\rm WP}$ it suffices to take $-\ddbar\log$ of the fiber integral in \eqref{fibber} divided by the local Euclidean volume form.

Recall that the discriminant locus $D\subset B$ is a closed analytic subvariety of pure codimension $1$, see  \cite[Proposition 3.1]{HO}. Let $x\in D$ be any smooth point of $D$, and choose an open neighborhood $U$ of $x$ with local  holomorphic coordinates centered at $x$ such that $D\cap U=\{z_1=0\}$. Thus, at points of $D\cap U$, the vectors $\frac{\de}{\de z_2},\dots,\frac{\de}{\de z_n}$ are tangent to $D$, while $\frac{\de}{\de z_1}$ is transversal. The main claim is the following:
\begin{proposition}\label{riccicurb} On $\{z_1\neq 0\}$ the Ricci curvature tensor $R_{i\ov{j}}=\Ric_{g_{\rm SK}}\left(\frac{\de}{\de z_i},\frac{\de}{\de \ov{z_j}}\right)$ of $\omega_{\rm SK}$ satisfies
\begin{equation}\label{ricci1}
0\leq R_{i\ov{i}}\leq C,\quad 2\leq i\leq n,
\end{equation}
\begin{equation}\label{ricci2}
0\leq R_{1\ov{1}}\leq \frac{C}{|z_1|^2},
\end{equation}
for some constant $C>0$.
\end{proposition}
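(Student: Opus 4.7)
The lower bounds $R_{i\ov{i}}\geq 0$ are immediate since $\Ric_{g_{\rm SK}}=\owp\geq 0$ implies the Hermitian matrix $(R_{i\ov{j}})$ is positive semidefinite. For the upper bounds I would work in special holomorphic coordinates $(z_1,\dots,z_n)$ on $U$, adapted so that $D\cap U=\{z_1=0\}$. Since the real parts of the $z_j$ are flat Darboux coordinates for $\nabla^{\rm SK}$, the Euclidean volume form $\prod_j i\,dz_j\wedge d\ov{z_j}$ is $\nabla^{\rm SK}$-parallel, and $\omega_{\rm SK}^n$ is proportional to $\det A$ times this Euclidean form, where $A:=\mathrm{Im}\,Z$. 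The problem thus reduces to estimating
\[
R_{i\ov{j}}=-\de_{z_i}\de_{\ov{z_j}}\log\det A.
\]

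To control $A$ near $\{z_1=0\}$ I would apply Schmid's nilpotent orbit theorem to the polarized VHS of weight one $R^1f_*\mathbb{Z}$ on $B^\circ$. The monodromy around a small loop encircling $\{z_1=0\}$ is unipotent (after a finite base change $z_1\mapsto z_1^{1/k}$ if necessary; this only rescales the final bound by a constant, as the chain rule shows), with $N=\log T$ symmetric and positive semidefinite via the polarization. By Picard-Lefschetz and the nilpotent orbit theorem, $Z_0(z):=Z(z)-\frac{N\log z_1}{2\pi i}$ is single-valued and extends holomorphically across $\{z_1=0\}$, so
\[
A=-\tfrac{1}{2\pi}N\log|z_1|+\mathrm{Im}\,Z_0,
\]
with $\mathrm{Im}\,Z_0$ smooth on $U$. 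The strict positivity \eqref{sl} translated into these special coordinates (using $(g_{\rm SK})_{i\ov{j}}=\tfrac{1}{2}A_{ij}$) gives $A\geq cI$ for some $c>0$ on $U\cap B^\circ$, so $A^{-1}$ is uniformly bounded. Decomposing $N=N_1\oplus 0$ with $N_1>0$ of rank $k$ and writing $A$ in the corresponding block form, one further finds $\|A^{-1}N\|=O\bigl(1/(-\log|z_1|)\bigr)$ as $z_1\to 0$.

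The conclusion then follows by expanding
\[
-\de_{z_i}\de_{\ov{z_j}}\log\det A=\mathrm{tr}\bigl(A^{-1}\de_{z_i}A\cdot A^{-1}\de_{\ov{z_j}}A\bigr)-\mathrm{tr}\bigl(A^{-1}\de_{z_i}\de_{\ov{z_j}}A\bigr),
\]
and using $\de_{z_1}A=-\tfrac{N}{4\pi z_1}+O(1)$, $\de_{z_i}A=O(1)$ for $i\geq 2$, and $\de_{z_i}\de_{\ov{z_j}}A=O(1)$ for all $i,j$ (since $\de\db\log|z_1|=0$ on $\{z_1\neq 0\}$). For $i\geq 2$ every factor is $O(1)$, yielding $R_{i\ov{i}}=O(1)$. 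For $R_{1\ov{1}}$ the leading contribution is $\tfrac{1}{16\pi^2|z_1|^2}\mathrm{tr}(A^{-1}NA^{-1}N)=O\bigl(|z_1|^{-2}(\log|z_1|)^{-2}\bigr)$, and all remaining cross terms are of strictly lower order in $|z_1|^{-1}$, giving $R_{1\ov{1}}\leq C/|z_1|^2$. The main technical hurdle in this plan is the uniform control of $A^{-1}$ and of $A^{-1}N$: the boundedness of $A^{-1}$ relies crucially on the strict positivity \eqref{sl}, while the decay of $\|A^{-1}N\|$ uses the block structure coming from the nilpotent orbit theorem.
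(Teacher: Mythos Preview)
Your overall strategy is the same as the paper's: reduce to estimating $-\de_i\db_{\ov{j}}\log\det(\mathrm{Im}\,Z)$, use the nilpotent orbit theorem to write $\mathrm{Im}\,Z$ as a bounded holomorphic piece plus a $(-\log|z_1|)N$ term, and differentiate. The calculus in your last paragraph is correct and matches the paper (your sharper $|z_1|^{-2}(\log|z_1|)^{-2}$ bound is even what the paper conjectures in the remark following the proposition).

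There is, however, a genuine gap in your setup. You assert that one may take the local coordinates $(z_1,\dots,z_n)$ on $U$ to be \emph{simultaneously} special holomorphic coordinates and adapted so that $D\cap U=\{z_1=0\}$. This is not available: special coordinates are only defined on $B^\circ$, and in general they do \emph{not} extend as a coordinate system across $D$. What is true (and what the paper invokes from \cite{TZ}) is that after the cyclic base change $q:(t_1,\dots,t_n)\mapsto(t_1^m,t_2,\dots,t_n)$ that makes the monodromy unipotent, one can find holomorphic \emph{functions} $w_1,\dots,w_n$ on $\ti{U}$ which are special coordinates on $\{t_1\neq 0\}$ but whose Jacobian $\det(\de w_j/\de t_p)$ may vanish along $\{t_1=0\}$. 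The paper then writes
\[
\log\frac{q^*\omega_{\rm SK}^n}{dV_E(t)}=\log\det\mathrm{Im}\,Z+\log\Bigl|\det\Bigl(\tfrac{\de w_j}{\de t_p}\Bigr)\Bigr|^2,
\]
and observes that the second term is pluriharmonic on $\{t_1\neq 0\}$, so that the Ricci components \emph{in the adapted $t$-coordinates} are still given by $-\de_{t_i}\db_{\ov{t}_j}\log\det\mathrm{Im}\,Z$. Without this step your identity $R_{i\ov{j}}=-\de_{z_i}\db_{\ov{z}_j}\log\det A$ in adapted coordinates is unjustified. Relatedly, your derivation of $A\geq cI$ from \eqref{sl} implicitly uses that the Jacobian $\de w/\de t$ is bounded (equivalently, that the $w_j$ extend holomorphically to $\ti{U}$), which is again the input from \cite{TZ}; the paper instead cites \cite[Lemma~4.3]{TZ} directly for this bound. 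Once you insert the Jacobian observation, your argument becomes essentially identical to the paper's.
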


\begin{proof}
We will use freely the arguments in \cite[\S 4.3]{TZ} (these are stated for $X$ projective hyperk\"ahler, but all arguments there go through for general $X$ hyperk\"ahler using that Lagrangian fibrations are locally projective \cite{Cam}).
By the Monodromy Theorem, there is $m\in\mathbb{N}_{>0}$ such that the eigenvalues of the monodromy operator $T$ (acting on $H^1(f^{-1}(y),\mathbb{Z})$ for some fixed basepoint $y\in U\backslash D$) are $m$th roots of unity. We may assume without loss that in our coordinates $U$ is the unit polydisc, and letting $\ti{U}$ be the unit polydisc with coordinates $(t_1,\dots,t_n)$, we define the branched covering
$$q:\ti{U}\to U, \quad q(t_1,\dots, t_n)=(t_1^m,t_2,\dots,t_n).$$
Then after pulling back to $\ti{U}$, the monodromy operator $T$ becomes unipotent, with
$$(T-\mathrm{Id})^2=0.$$
Thanks to the argument in \cite[P. 774]{TZ}, we can find holomorphic functions $w_1,\dots, w_n$ on $\ti{U}$, which are special holomorphic coordinates on $\ti{U}\cap\{t_1\neq 0\}$ (but need not form a coordinate system at points on $\{t_1=0\}$, and they may even vanish there), such that, on $\ti{U}\cap\{t_1\neq 0\}$, we can write
$$q^*\omega_{\rm SK}=\frac{i}{2}\sum_{j,k}\mathrm{Im}Z_{jk}(t) dw_j \wedge d\ov{w_k}=\frac{i}{2}\sum_{j,k,p,q}\mathrm{Im}Z_{jk}(t)\frac{\de w_j}{\de t_p}\ov{\frac{\de w_k}{\de t_q}} dt_p \wedge d\ov{t_q},$$
where $Z_{jk}(t)$ is the local period map pulled back to $\ti{U}$. Thus, if we denote by $dV_E$ the Euclidean volume form on $\ti{U}$ given by the coordinates $t_1,\dots t_n$, we have
$$\log\frac{q^*\omega^n_{\rm SK}}{dV_E}=\log\det \mathrm{Im}Z + \log \left|\det\left(\frac{\de w_j}{\de t_p}\right)\right|^2,$$
and since $\det\left(\frac{\de w_j}{\de t_p}\right)$ is holomorphic and nonzero on $\ti{U}\cap\{t_1\neq 0\}$, we get
\begin{equation}\label{sav4}
\Ric_{q^*g_{\rm SK}}\left(\frac{\de}{\de t_j},\frac{\de}{\de \ov{t_k}}\right)=-\frac{\de}{\de t_j}\frac{\de}{\de \ov{t_k}}\log\frac{q^*\omega^n_{\rm SK}}{dV_E}=-\frac{\de}{\de t_j}\frac{\de}{\de \ov{t_k}}\log\det \mathrm{Im}Z.
\end{equation}
To estimate this, following  \cite[Lemma 4.3]{TZ} we use Schmid's Nilpotent Orbit Theorem \cite{Sch} and see there are $b_{jk}\in\mathbb{Q}$ and a holomorphic map
$Q$ from $\ti{U}$ to the space of symmetric $n\times n$ complex matrices, such that on $\ti{U}\cap\{t_1\neq 0\}$ we have
$$Z_{jk}(t)=Q_{jk}(t)+\frac{\log t_1}{2\pi i}b_{jk},\quad 1\leq j,k\leq n,$$
for some branch of $\log$. Thus,
\begin{equation}\label{sav1}
\mathrm{Im} Z_{jk}(t)=\mathrm{Im} Q_{jk}(t)-\frac{b_{jk}}{2\pi}\log|t_1|,
\end{equation}
and furthermore (see \cite[Lemma 4.3]{TZ}) there is $C>0$ such that on $\ti{U}\cap\{t_1\neq 0\}$ we have
\begin{equation}\label{sav2}
\mathrm{Im}Z(t) \geq C^{-1}\mathrm{Id},
\end{equation}
and so the inverse matrix of $\mathrm{Im}Z(t)$, whose entries will be denoted by $(\mathrm{Im}Z(t))^{pq}$, satisfies
\begin{equation}\label{sav3}
0<(\mathrm{Im}Z(t))^{-1} \leq C\mathrm{Id}.
\end{equation}
Differentiating the determinant gives
\[\begin{split}
&-\frac{\de}{\de t_j}\frac{\de}{\de \ov{t_k}}\log\det \mathrm{Im}Z(t)=-(\mathrm{Im}Z(t))^{pq}\frac{\de}{\de t_j}\frac{\de}{\de \ov{t_k}}\mathrm{Im}Z_{pq}(t)\\
&+(\mathrm{Im}Z(t))^{pq}(\mathrm{Im}Z(t))^{rs}\frac{\de}{\de t_j}\mathrm{Im}Z_{pr}(t)\frac{\de}{\de \ov{t_k}}\mathrm{Im}Z_{qs}(t).
\end{split}\]
First we take $j\geq 2$, and differentiating \eqref{sav1} gives
\[\begin{split}
&-\frac{\de}{\de t_j}\frac{\de}{\de \ov{t_j}}\log\det \mathrm{Im}Z(t)=(\mathrm{Im}Z(t))^{pq}(\mathrm{Im}Z(t))^{rs}\frac{\de}{\de t_j}\mathrm{Im}Q_{pr}(t)\frac{\de}{\de \ov{t_j}}\mathrm{Im}Q_{qs}(t)
\leq C,
\end{split}\]
using \eqref{sav3} and the fact that $Q$ is holomorphic on all of $\ti{U}$.
As for the $t_1$ direction, differentiating \eqref{sav1} we have
\[\begin{split}
&-\frac{\de}{\de t_1}\frac{\de}{\de \ov{t_1}}\log\det \mathrm{Im}Z(t)\\
&=(\mathrm{Im}Z(t))^{pq}(\mathrm{Im}Z(t))^{rs}\frac{\de}{\de t_1}\left(\mathrm{Im}Q_{pr}(t)-\frac{b_{pr}}{2\pi}\log|t_1|\right)
\frac{\de}{\de \ov{t_1}}\left(\mathrm{Im}Q_{qs}(t)-\frac{b_{qs}}{2\pi}\log|t_1|\right)\\
&\leq C\sum_{p,r}\left|\frac{\de}{\de t_1}\left(\mathrm{Im}Q_{pr}(t)-\frac{b_{pr}}{2\pi}\log|t_1|\right)\right|^2\\
&\leq C+C\left|\frac{\de}{\de t_1}\log|t_1|\right|^2\\
&\leq \frac{C}{|t_1|^2}.
\end{split}\]
Going back to \eqref{sav4}, this shows that on $\ti{U}\cap\{t_1\neq 0\}$) we have
$$0\leq \Ric_{q^*g_{\rm SK}}\left(\frac{\de}{\de t_j},\frac{\de}{\de \ov{t_j}}\right)\leq C,\quad j\geq 2,$$
$$0\leq\Ric_{q^*g_{\rm SK}}\left(\frac{\de}{\de t_1},\frac{\de}{\de \ov{t_1}}\right)\leq  \frac{C}{|t_1|^2},$$
and so on $U\cap\{z_1\neq 0\}$ we have for $j\geq 2$,
$$0\leq R_{j\ov{j}}=\Ric_{g_{\rm SK}}\left(\frac{\de}{\de z_j},\frac{\de}{\de \ov{z_j}}\right)=\Ric_{q^*g_{\rm SK}}\left(\frac{\de}{\de t_j},\frac{\de}{\de \ov{t_j}}\right)\leq C,$$
and
\[\begin{split}0\leq R_{1\ov{1}}&=\Ric_{g_{\rm SK}}\left(\frac{\de}{\de z_1},\frac{\de}{\de \ov{z_1}}\right)=\frac{1}{m^2|t_1|^{2m-2}}\Ric_{q^*g_{\rm SK}}\left(\frac{\de}{\de t_1},\frac{\de}{\de \ov{t_1}}\right)\\
&\leq \frac{C}{m^2|t_1|^{2m}}\leq\frac{C}{|z_1|^2},
\end{split}\]
as desired.
\end{proof}

\begin{rmk}
We expect that the sharp bound in \eqref{ricci2} in general is of the form $\frac{C}{|z_1|^2\log^2|z_1|}$, cf. \cite{Yosh} when $\dim B=1$. One may be able to show this by proving an asymptotic expansion for the fiber integral in \eqref{fibber} which can be differentiated term-by-term, as in \cite{Bar,Taka2}.
\end{rmk}

\section{Rational curves and rigidity}\label{se4}
Recall that $B$ is a Fano manifold, hence uniruled. Let $\nu:\mathbb{P}^1\to B$ be a rational curve (i.e. a nonconstant holomorphic map) whose image is not contained in $D$. Our first result of this section shows that $\nu$ is a {\em free} rational curve, in the terminology of Mori Theory, cf. \cite{Kol2}.

\subsection{Freeness of the rational curve}
By Grothendieck's Theorem, the vector bundle $\nu^* TB$ splits and so we can write
\begin{equation}\label{h1}
\nu^*TB\cong\bigoplus_{i=1}^n\mathcal{O}(a_i),
\end{equation}
for some integers $a_i$, which we order by $a_1\geq\cdots\geq a_n.$
Dualizing, we have
\begin{equation}\label{h6}
\nu^*\Omega^1_{B}\cong\bigoplus_{i=1}^n\mathcal{O}(-a_i),
\end{equation}
and
\begin{equation}\label{deg}
q:=-K_B\cdot\nu(\mathbb{P}^1)=\sum_{i=1}^n a_i>0,
\end{equation}
since $B$ is Fano.

On $B^\circ$ we equip $\Omega^{1}_B$ with the Hermitian metric $h_{\rm SK}$ induced by the special K\"ahler metric $\omega_{\rm SK}$.

\begin{lemma}\label{nonn} We have $a_n\geq 0$.
\end{lemma}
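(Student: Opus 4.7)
The plan is to argue by contradiction: assume $a_n<0$, and combine the nonnegative bisectional curvature of $\omega_{\rm SK}$ with the strict positivity estimate \eqref{sl} to produce a contradiction.

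First I would isolate the direct summand $\mathcal{O}(-a_n)\hookrightarrow \nu^*\Omega^1_B$ coming from \eqref{h6}, which by hypothesis is a line bundle of positive degree $-a_n>0$. The nonnegativity of the bisectional curvature recorded right after \eqref{curv} is precisely the statement that $(TB,g_{\rm SK})$ is Griffiths semipositive on $B^\circ$, and dualizing gives that $(\Omega^1_B,h_{\rm SK})$ is Griffiths seminegative there. Both Griffiths properties pass to holomorphic pullbacks, and a subbundle of a Griffiths seminegative Hermitian bundle inherits seminegativity under the restricted metric, since the second-fundamental-form correction has the favorable sign. Thus the restricted Hermitian metric $\tilde h$ on $\mathcal{O}(-a_n)$ has nonpositive Chern curvature on $\mathbb{P}^1\setminus\nu^{-1}(D)$, which for a local holomorphic frame $s$ means that $\log \tilde h(s,s)$ is subharmonic there.

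Next I would use \eqref{sl} to control $\tilde h$ near the finite set $\nu^{-1}(D)$. The bound $\omega_{\rm SK}\geq C^{-1}\omega_B$ dualizes to $h_{\rm SK}\leq C h_B$ on $\Omega^1_B$, where $h_B$ is the smooth Hermitian metric induced by $\omega_B$. Restricting to the subbundle and pulling back gives $\tilde h(s,s)\leq C(\nu^* h_B)(s,s)$, with the right-hand side smooth on all of $\mathbb{P}^1$, so $\log \tilde h(s,s)$ is locally bounded above near each point of $\nu^{-1}(D)$. The classical removable singularity theorem for subharmonic functions on a punctured disk then supplies a subharmonic extension of $\log \tilde h(s,s)$ across each such point, equivalently a (possibly singular) Hermitian metric on $\mathcal{O}(-a_n)$ over all of $\mathbb{P}^1$ whose Chern curvature current is $\leq 0$. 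Integrating via Chern--Weil,
$$-a_n=\deg\mathcal{O}(-a_n)=\int_{\mathbb{P}^1} c_1(\mathcal{O}(-a_n),\tilde h)\leq 0,$$
contradicting $-a_n>0$.

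The main subtlety is the extension step. Since $g_{\rm SK}$ is incomplete near $D$ and, as \eqref{ricci2} shows, its Ricci curvature may even blow up there, it is a priori not obvious that the induced metric on the line bundle $\mathcal{O}(-a_n)$ behaves tamely across the finite set $\nu^{-1}(D)$. It is precisely the strict positivity \eqref{sl} that resolves this: it prevents $\tilde h$ from blowing up in the one direction that matters for the removable singularity theorem, which, combined with the favorable sign of the curvature, is all we need.
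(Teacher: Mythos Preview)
Your proof is correct and follows essentially the same route as the paper: both exploit that $(\Omega^1_B,h_{\rm SK})$ is Griffiths seminegative on $B^\circ$, use \eqref{sl} to bound the relevant weight from above near $\nu^{-1}(D)$, extend across the punctures, and finish by integration. The only cosmetic differences are that the paper works directly with the section $v\in H^0(\mathbb{P}^1,\nu^*\Omega^1_B\otimes\mathcal{O}(a_n))$ corresponding to the quotient $\nu^*TB\to\mathcal{O}(a_n)$ (so that $\log|v|^2_h$ is a global quasi-psh function and Stokes applies immediately), and that the paper proves $a_n\geq 0$ directly rather than by contradiction---your hypothesis $a_n<0$ is never used until the final line, so the contradiction framing is unnecessary.
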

\begin{proof}
This argument was suggested to us by M. P\u{a}un. Consider the nontrivial section $v\in H^0(\mathbb{P}^1,\nu^*\Omega^1_B\otimes \mathcal{O}(a_n))$ which corresponds to the quotient morphism $\nu^*TB\to \mathcal{O}(a_n)$.
Equip $L:=\mathcal{O}(a_n)$ with a smooth metric $h_L$ on $\mathbb{P}^1$, and equip $\nu^*\Omega^1_B$ with the smooth metric $\nu^*h_{\rm SK}$ on $\mathbb{P}^1\backslash \nu^{-1}(D)$ which is the pullback of the metric induced by $\omega_{\rm SK}$. Thus, the curvature of $\nu^*h_{\rm SK}$ is Griffiths nonpositive on $\mathbb{P}^1\backslash \nu^{-1}(D)$, since $\omega_{\rm SK}$ has nonnegative bisectional curvature on $B^\circ$ and dualization reverses the sign of Griffiths positivity (see e.g. \cite[\S VII.6]{De}). Equip then $\nu^*\Omega^1_B\otimes \mathcal{O}(a_n)$ with the metric $h=\nu^*h_{\rm SK}\otimes h_L$ on $\mathbb{P}^1\backslash \nu^{-1}(D)$.

Differentiating $\log|v|^2_h$ on $\mathbb{P}^1\backslash \nu^{-1}(D)$ we have the well-known identity of $(1,1)$-forms on $\mathbb{P}^1\backslash \nu^{-1}(D)$
$$\ddbar\log|v|^2_h=\frac{|\nabla v|^2_h}{|v|^2_h}-\frac{|\langle\nabla v,v\rangle_h|^2}{|v|^4_h}-R_{h_L}-\frac{\langle R_{\nu^*h_{\rm SK}}(v),v\rangle_h}{|v|^2_h},$$
where $\nabla v$ is an $\nu^*\Omega^1_B\otimes \mathcal{O}(a_n)$-valued $(1,0)$-form, so $|\nabla v|^2_h$ is a $(1,1)$-form, and similarly for the other terms. Using Cauchy-Schwarz we have
$$\frac{|\langle\nabla v,v\rangle_h|^2}{|v|^4_h}\leq\frac{|\nabla v|^2_h}{|v|^2_h},$$
and since on $\mathbb{P}^1\backslash \nu^{-1}(D)$ the curvature of $\nu^*h_{\rm SK}$ is Griffiths nonpositive, we can estimate
\begin{equation}\label{xyp}\begin{split}
\ddbar\log|v|^2_h&\geq -R_{h_L}-\frac{\langle R_{\nu^*h_{\rm SK}}(v),v\rangle_h}{|v|^2_h}\\
&\geq -R_{h_L},
\end{split}\end{equation}
 Since $R_{h_L}$ is a smooth form on $\mathbb{P}^1$, we see that $\log|v|^2_h$ is quasi-psh on $\mathbb{P}^1\backslash \nu^{-1}(D)$, and using \eqref{sl} we see that
$$\sup_{\mathbb{P}^1\backslash \nu^{-1}(D)}\log|v|^2_h\leq C+\sup_{\mathbb{P}^1\backslash \nu^{-1}(D)}\log|v|^2_{\nu^*h_B\otimes h_L}<\infty,$$
where $h_B$ is the smooth metric on $\Omega^1_B$ induced by $\omega_B$. Thus $\log|v|^2_h$ is bounded above, hence by the Grauert-Remmert extension theorem \cite{GR} the inequality $R_{h_L}+\ddbar\log|v|^2_h\geq 0$ extends over the singularities to all of $\mathbb{P}^1$ (in the weak sense). Integrating this over $\mathbb{P}^1$ and using Stokes thus gives
$$a_n=\int_{\mathbb{P}^1}R_{h_L}=\int_{\mathbb{P}^1}(R_{h_L}+\ddbar\log|v|^2_h)\geq 0,$$
as desired.
\end{proof}
Lemma \ref{nonn} says that every rational curve in $B$ which is not contained in $D$ is free, and by Mori Theory it deforms to cover a Zariski dense subset of $B$ (see e.g. \cite{Kol2}).

The pullback morphism $\nu^*\Omega^1_{B}\to \Omega^1_{\mathbb{P}^1}$ dualizes to a nontrivial morphism $\mathcal{O}(2)\to \nu^*TB$, and hence $a_1\geq 2$. Using this observation and Lemma \ref{nonn} we can write the splittings in \eqref{h1} and \eqref{h6} as
\begin{equation}\label{h5}
\nu^*TB\cong\mathcal{O}(a_1)\oplus\cdots\oplus \mathcal{O}(a_{n-\ell})\oplus \mathcal{O}^{\oplus\ell},
\end{equation}
\begin{equation}\label{h7}
\nu^*\Omega^1_{B}\cong\mathcal{O}(-a_1)\oplus\cdots\oplus \mathcal{O}(-a_{n-\ell})\oplus \mathcal{O}^{\oplus\ell},
\end{equation}
for some $0\leq \ell\leq n-1$, where $a_1\geq a_2\geq\cdots\geq a_{n-\ell}\geq 1, a_1\geq 2,$ and $$q=\sum_{i=1}^{n-\ell}a_i.$$

Recall now a result by Cho-Miyaoka-Shepherd-Barron \cite[Cor. 0.4(11)]{CMSB}, which uses Mori theory:
\begin{theorem}\label{unir}
Let $B$ be a uniruled projective manifold, $D$ an effective divisor, and suppose that, for any rational curve $\nu:\mathbb{P}^1\to B$ which is not contained in $D$, we have the inequality
\begin{equation}\label{agognata}
-K_{B}\cdot \nu(\mathbb{P}^1)\geq n+1.
\end{equation}
Then $B\cong\mathbb{P}^n$.
\end{theorem}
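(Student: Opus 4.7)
The plan is to prove this along the lines of the original Cho--Miyaoka--Shepherd-Barron argument, which couples Mori's bend-and-break with the theory of the variety of minimal rational tangents (VMRT). The hypothesis on $-K_B$ will force the VMRT at a general point to fill the entire projectivized tangent space, which will in turn characterize $\mathbb{P}^n$.

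First I would use uniruledness to pick a dominating family $\mathcal{K}$ of rational curves on $B$ whose $(-K_B)$-degree $d$ is minimal among all dominating families. The members of $\mathcal{K}$ are then automatically \emph{unsplit}: any degeneration to a reducible curve would contain a component of strictly smaller anticanonical degree that still dominates $B$, contradicting the minimality of $d$. Next, for a very general point $x \in B \setminus D$, every member $[C]$ of the subscheme $\mathcal{K}_x \subset \mathcal{K}$ of curves through $x$ is not contained in $D$ (since $x \notin D$), so the hypothesis forces $d \geq n+1$. A standard deformation count then gives $\dim_{[C]} \mathcal{K}_x \geq -K_B \cdot C - 2 = d - 2 \geq n-1$.

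The second step is to invoke Kebekus's tangent-map theorem for unsplit dominating families: for a general $x$, the tangent map
\[
\tau_x : \mathcal{K}_x \to \mathbb{P}(T_x B), \qquad [C] \mapsto [T_x C],
\]
is a finite morphism onto its image $\mathcal{C}_x \subset \mathbb{P}(T_x B)$ (the VMRT at $x$). Combined with the dimension bound, this forces $\mathcal{C}_x = \mathbb{P}(T_x B)$. To conclude $B \cong \mathbb{P}^n$, I would then appeal to the Cartan--Fubini extension theorem of Hwang--Mok: a uniruled projective manifold whose VMRT at a general point is all of the projectivized tangent space must be biholomorphic to $\mathbb{P}^n$. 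Alternatively, the original CMSB approach produces this isomorphism directly by showing that through two general points of $B$ there passes a unique minimal rational curve, and that these curves form the family of lines of a projective-space structure on $B$.

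The main obstacle is Kebekus's finiteness of $\tau_x$, equivalently the assertion that two distinct minimal rational curves through a general point cannot be tangent at that point. This is the technical core of the Kebekus--CMSB work and is proved by a delicate bend-and-break argument: if such a tangency occurred, one could smooth the reducible union $C_1 \cup_x C_2$ into an irreducible curve and then bend-and-break it to produce a rational curve through $x$ of $(-K_B)$-degree strictly less than $d$, contradicting minimality. The divisor $D$ causes no real trouble, because a rational curve through $x \in B \setminus D$ automatically contains the point $x \notin D$ and hence is not contained in $D$, so the anticanonical lower bound applies precisely to the curves we are using.
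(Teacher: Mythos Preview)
The paper does not give a proof of this statement; it is quoted from Cho--Miyaoka--Shepherd-Barron \cite{CMSB} and used as a black box in the subsequent argument. Your sketch is a faithful outline of the actual CMSB strategy (minimal family of rational curves, Kebekus's finiteness of the tangent map, the VMRT filling $\mathbb{P}(T_xB)$, and the resulting characterization of $\mathbb{P}^n$), so there is no ``paper's proof'' to compare it against.

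One point in your sketch deserves more care. Your argument that the minimal dominating family $\mathcal{K}$ is unsplit (``any degeneration would contain a component of strictly smaller anticanonical degree that still dominates $B$'') tacitly assumes that every component of a degeneration has positive $(-K_B)$-degree. But the hypothesis only controls rational curves \emph{not} contained in $D$; a component of the limit cycle lying inside $D$ could a priori have $(-K_B)$-degree $\le 0$, and then the component through $x$ need not have strictly smaller degree than $d$. In the paper's intended application $B$ is Fano, so $-K_B$ is ample and this issue disappears. In the generality stated, the clean move is to fix a very general $x\notin D$, so that every rational curve through $x$ automatically satisfies the degree bound, and work with the family of minimal-degree curves through $x$; the local properness of $\mathcal{K}_x$ and finiteness of $\tau_x$ are then obtained by the more delicate bend-and-break analysis in CMSB and Kebekus that you allude to at the end, which is doing more work than the one-line minimality claim suggests.
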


If, in our setting, for all rational curves $\nu:\mathbb{P}^1\to B$ not contained in $D$ we have $\ell=0$, i.e. $a_i>0$ for all $i$, then since $a_1\geq 2$ it would follow that $-K_{B}\cdot \nu(\mathbb{P}^1)=\sum_{i=1}^na_i\geq n+1$ and so $B$ would be isomorphic to $\mathbb{P}^n$. In other words, if $B\not\cong\mathbb{P}^n$ then there exists a rational curve $\nu_0:\mathbb{P}^1\to B$ not contained in $D$ which has $\ell\geq 1$, i.e. there are some trivial factors $\mathcal{O}^{\oplus\ell}$ in the splitting \eqref{h1}. We may also assume that the anticanonical degree $q:=-K_{B}\cdot \nu_0(\mathbb{P}^1)$ is as small as possible among all rational curves not contained in $D$ (and satisfies $2\leq q\leq n$), and we will call these {\em minimal degree rational curves}, which is consistent with the standard terminology, e.g. in \cite{Hw3}. By Lemma \ref{nonn}, this rational curve $\nu_0$ is free and so it deforms to cover a Zariski dense subset of $B$. Let $\mathcal{K}$ be the irreducible component of the space of rational curves in $B$ (see \cite[\S II.2]{Kol2}) which contains $\nu_0$, which we fix once and for all. From Mori Theory (see \cite{Kol2} and \cite[\S 3]{Hw3}) we have that $\mathcal{K}$ is a quasiprojective variety equipped with a universal $\mathbb{P}^1$-bundle $\rho:\mathcal{U}\to\mathcal{K}$ and an evaluation map $\mu:\mathcal{U}\to B$. For any $t\in\mathcal{K}$ we will also write
$$\mathcal{U}_t:=\rho^{-1}(t)\subset\mathcal{U},$$
so $\mathcal{U}_t\cong\mathbb{P}^1$ is the rational curve corresponding to $t$, and
$$\nu_t:=\mu|_{\mathcal{U}_t}:\mathcal{U}_t\to B,$$
will denote the morphism to $B$.

Furthermore, the generic rational curve in $\mathcal{K}$ is free and not contained in $D$,  $\mathcal{K}$ is smooth at such curves, and the integers $a_i,\ell$ in the decomposition \eqref{h5} are the same for all generic such curves. Given $x\in B^\circ$ there is some minimal degree rational curve $\nu$ in $\mathcal{K}$ that passes through $x$ and is smooth at $x$. Thanks to \cite[Prop. II.3.7]{Kol2}, we can also assume that $\nu(\mathbb{P}^1)$ intersects $D$ only at the regular points of $D$ (since the singularities of $D$ have codimension at least $2$ in $B$), and that these intersections are transverse.
The evaluation morphism $\mu:\mathcal{U}\to B$ is a submersion over a Zariski open subset of $B$, which up to enlarging $D$ we may assume equals $B^\circ$. Thus, if we define
$\mathcal{U}^\circ:=\mu^{-1}(B^\circ),$ then $\mathcal{U}^\circ$ is smooth and $\mu:\mathcal{U}^\circ\to B^\circ$ is a submersion.
The metric $g_{\rm SK}$ on $TB^\circ$ induces by pullback a metric $\mu^*TB^\circ$ over $\mathcal{U}^\circ$, which we will denote by the same symbol, and similarly for the connections $\nabla$ and $\nabla^{\rm SK}$, which induce pullback connections denoted in the same way.

\begin{lemma}
There is a locally free sheaf $\mathcal{V}^\sharp$ on $\mathcal{U}$ such that for every $t\in\mathcal{K}$, the restriction $\mathcal{V}^\sharp\big|_{\mathcal{U}_t}$ of $\mathcal{V}^\sharp$ to the rational curve $\mathcal{U}_t$ equals the factor $\mathcal{O}^{\oplus\ell}$  in the splitting \eqref{h7} for $\nu_t^*\Omega^1_B$.
\end{lemma}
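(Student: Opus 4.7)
The plan is to construct $\mathcal{V}^\sharp$ as a relative push-forward; concretely, it will be the subbundle of $\mu^*\Omega^1_B$ globally generated along each fiber by the sections of $\nu_t^*\Omega^1_B$. First I would restrict to the Zariski open locus $\mathcal{K}^\circ\subset\mathcal{K}$ of free curves whose splitting type agrees with \eqref{h7}; since the paper's setup already permits shrinking $\mathcal{K}$, I may assume $\mathcal{K}=\mathcal{K}^\circ$ and replace $\mathcal{U}_{\mathcal{K}}$ by $\rho^{-1}(\mathcal{K}^\circ)$. On this locus, the two functions
$$t\mapsto h^0(\mathbb{P}^1,\nu_t^*\Omega^1_B)=\ell,\qquad t\mapsto h^1(\mathbb{P}^1,\nu_t^*\Omega^1_B)=\sum_{i=1}^{n-\ell}(a_i-1)$$
are locally constant, because every $a_i\geq 1$ makes the individual summand cohomologies constant along $\mathcal{K}$.

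By Grauert's base-change theorem, the constancy of these dimensions implies that $\mathcal{F}:=\rho_*\mu^*\Omega^1_B$ is a locally free sheaf of rank $\ell$ on $\mathcal{K}$, whose formation commutes with restriction to any point: the fiber of $\mathcal{F}$ at $t$ is canonically $H^0(\mathbb{P}^1,\nu_t^*\Omega^1_B)$. I would then define
$$\mathcal{V}^\sharp:=\rho^*\mathcal{F},$$
a rank $\ell$ holomorphic vector bundle on $\mathcal{U}_{\mathcal{K}}$ whose restriction to each fiber $\rho^{-1}(t)\cong\mathbb{P}^1$ is canonically trivial of rank $\ell$.

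To finally identify $\mathcal{V}^\sharp$ with the trivial factor in \eqref{h7}, I would invoke the counit of the $(\rho^*,\rho_*)$-adjunction
$$\rho^*\rho_*\mu^*\Omega^1_B\longrightarrow \mu^*\Omega^1_B,$$
whose restriction to a fiber $\rho^{-1}(t)$ is the fiberwise evaluation map
$$H^0(\mathbb{P}^1,\nu_t^*\Omega^1_B)\otimes\mathcal{O}_{\mathbb{P}^1}\longrightarrow \nu_t^*\Omega^1_B.$$
Because each $a_i\geq 1$, the summands $\mathcal{O}(-a_i)$ contribute nothing to $H^0$, so $H^0(\mathbb{P}^1,\nu_t^*\Omega^1_B)$ is exactly the space of constant sections of the trivial summand $\mathcal{O}^{\oplus\ell}$ in \eqref{h7}, and the evaluation map is the inclusion of this summand. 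Hence the adjunction morphism is fiberwise injective, realizing $\mathcal{V}^\sharp$ as a subbundle of $\mu^*\Omega^1_B$ whose restriction to each $\nu_t$ is precisely the desired $\mathcal{O}^{\oplus\ell}$-factor.

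The only real obstacle is the local freeness of $\rho_*\mu^*\Omega^1_B$, which could fail at points of $\mathcal{K}$ where the splitting type jumps and $h^0$ goes up; this is precisely why one must work over the generic locus $\mathcal{K}^\circ$. This restriction is harmless, because the subsequent sections only use $\mathcal{V}^\sharp$ over the open piece $\mathcal{U}\subset\rho^{-1}(\ti{\mathcal{K}})$ on which $\mu:\mathcal{U}\to B^\circ$ is a finite covering, and this $\mathcal{U}$ already lies inside $\rho^{-1}(\mathcal{K}^\circ)$ after the usual enlargement of $D$.
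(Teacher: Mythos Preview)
Your proposal is correct and takes essentially the same approach as the paper: both define $\mathcal{V}^\sharp=\rho^*\rho_*\mu^*\Omega^1_B$ and invoke Grauert's theorem via the constancy of $h^0$ to get local freeness. Your write-up is in fact slightly more careful than the paper's, making explicit the restriction to the generic locus $\mathcal{K}^\circ$ where the splitting type is constant and spelling out the adjunction map $\rho^*\rho_*\mu^*\Omega^1_B\to\mu^*\Omega^1_B$ that realizes $\mathcal{V}^\sharp$ as a subbundle.
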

\begin{proof}
For the sake of clarity, we first define the fiber $\mathcal{V}^\sharp$ at any point on $\mathcal{U}_t\cong\mathbb{P}^1$. For this, we consider $\nu_t^*\Omega^1_B,$ which from the splitting \eqref{h7} is isomorphic to
$\mathcal{O}(-a_1)\oplus\cdots\oplus \mathcal{O}(-a_{n-\ell})\oplus \mathcal{O}^{\oplus\ell}$. Its space of global sections $H^0(\mathcal{U}_t, \nu_t^*\Omega^1_B)$ is then $\ell$-dimensional, and we can find a basis of such sections which are linearly independent at all points of $\mathbb{P}^1$. The fiber of $\mathcal{V}^\sharp$ at any point on $\mathcal{U}_t$ is then defined as the linear span of any given basis of $H^0(\mathcal{U}_t, \nu_t^*\Omega^1_B)$.

To prove that this collection of $\ell$-dimensional vector spaces form a locally free sheaf, consider first the locally free sheaf $\mu^*\Omega^1_B$ on $\mathcal{U}$, and take its
direct image sheaf $\rho_*\mu^*\Omega^1_B$.
Since $h^0(\mathcal{U}_t, \mu^*\Omega^1_B|_{\mathcal{U}_t})=\ell$ is independent of $t$, Grauert's Theorem on direct images \cite[Cor. III.12.9]{Har} shows that $\rho_*\mu^*\Omega^1_B$ is a locally free sheaf on $\mathcal{K}$. We then set $\mathcal{V}^\sharp=\rho^*\rho_*\mu^*\Omega^1_B$, which is a locally free sheaf over $\mathcal{U}$ whose fibers agree with our previous description.
\end{proof}

Our main interest will be with the restriction of $\mathcal{V}^\sharp$ to $\mathcal{U}^\circ$, which will be denoted with the same notation. This is a holomorphic vector bundle over $\mathcal{U}^\circ$, which is naturally a subbundle of $\mu^*\Omega^1_{B^\circ}$.  We then define a holomorphic subbundle  $\mathcal{V}\subset \mu^*TB^\circ$ over $\mathcal{U}^\circ$ as the annihilator of $\mathcal{V}^\sharp$, namely
$$\mathcal{V}=\{v\in \mu^*TB^\circ\ |\ \gamma(v)=0,\ \text{for all }\gamma\in \mathcal{V}^\sharp\}.$$
For any $t\in\mathcal{K}$ we have that the restriction of $\mathcal{V}$ to $\mathcal{U}_t$ equals the factor $\mathcal{O}(a_1)\oplus\cdots\oplus \mathcal{O}(a_{n-\ell})$ in the splitting \eqref{h5} for $\nu_t^*TB$. Observe that since the pullback morphism $\nu_t^*\Omega^1_{B}\to \Omega^1_{\mathbb{P}^1}$ dualizes to a nontrivial morphism $\mathcal{O}(2)\to \nu_t^*TB$, it follows that the tangent direction to the image of $\nu_t$ at any point on this curve (which is a line in $TB^\circ$) when pulled back via $\mu$ lies in the fiber of $\mathcal{V}$ over $\mathcal{U}_t$.

We then define a smooth complex subbundle $\mathcal{N}\subset \mu^*TB^\circ$ over $\mathcal{U}^\circ$ as the $g_{\rm SK}$-orthogonal complement of $\mathcal{V}$, and $\mathcal{N}^\sharp\subset \mu^*\Omega^1_{B^\circ}$ as its annihilator (or equivalently as the  $g_{\rm SK}$-orthogonal complement of $\mathcal{V}^\sharp$), so that over $\mathcal{U}^\circ$ we have the splittings
\begin{equation}\label{ancorasplit}
\mu^*TB^\circ=\mathcal{V}\oplus\mathcal{N}, \quad \mu^*\Omega^1_{B^\circ}=\mathcal{V}^\sharp\oplus\mathcal{N}^\sharp.
\end{equation}
The bundles $\mathcal{N},\mathcal{N}^\sharp$ are not yet known to be holomorphic (we will prove this later on).
Note also that the (complex antilinear) smooth isomorphism
\begin{equation}\label{isow}
\mu^*TB^\circ\to\mu^*\Omega^1_{B^\circ},
\end{equation}
defined by the metric $g_{\rm SK}$ (by ``lowering the index'' and conjugating) maps $\mathcal{N}$ isomorphically onto $\mathcal{V}^\sharp$.

\subsection{The rigidity theorem}We have the following rigidity statement:
\begin{theorem}\label{rigid}
Given a rational curve $\mathcal{U}_t$ for some $t\in \mathcal{K}$, with morphism $\nu_t:\mathbb{P}^1\to B$, and given a section $u\in H^0(\mathbb{P}^1,\mathcal{V}^\sharp\big|_{\mathcal{U}_t})$, let $\nu_t^*h_{\rm SK}$ be the smooth metric on $\nu_t^*\Omega^1_B$ over $\mathbb{P}^1\backslash\nu_t^{-1}(D)$ induced by $g_{\rm SK}$, and let $R_{\nu_t^*h_{\rm SK}}$ be its curvature. Then we have:
\begin{itemize}
\item[(a)] On $\mathbb{P}^1\backslash\nu_t^{-1}(D)$ we have
\begin{equation}\label{new1}
\langle R_{\nu_t^*h_{\rm SK}}(u),u\rangle_{\nu_t^*h_{\rm SK}}=0.
\end{equation}
\item[(b)] Let $\zeta$ be the smooth section of $\mathcal{N}\big|_{\mathcal{U}_t}$ over $\mathbb{P}^1\backslash\nu_t^{-1}(D)$ which corresponds to $u$ under \eqref{isow}, and let $\alpha$ be a tangent vector to $\nu_t(\mathbb{P}^1)$. Then at any point on $\nu_t(\mathbb{P}^1)\cap B^\circ$ the curvature tensor of $g_{\rm SK}$ satisfies
\begin{equation}\label{crux}
R_{\alpha\ov{\alpha}\zeta\ov{\zeta}}=0,
\end{equation}
and hence
\begin{equation}\label{crux2}
\Xi(\alpha,\zeta,\beta)=0,\quad\text{for all }\beta\in TB^\circ.
\end{equation}
\item[(c)] For $\zeta$ as in (b), and for any section $v\in H^0(\mathbb{P}^1,\mathcal{V}\big|_{\mathcal{U}_t})$, at any point on $\nu_t(\mathbb{P}^1)\cap B^\circ$ we have
\begin{equation}\label{cruxa}
R_{v\ov{v}\zeta\ov{\zeta}}=0,
\end{equation}
as well as
\begin{equation}\label{cruxa2}
\Xi(v,\zeta,\beta)=0,\quad\text{for all }\beta\in TB^\circ.
\end{equation}
\item[(d)] Every section $u\in H^0(\mathbb{P}^1,\mathcal{V}^\sharp\big|_{\mathcal{U}_t})$ is parallel on $\mathbb{P}^1\backslash\nu_t^{-1}(D)$ with respect to the Chern connection $\nabla$ induced by $\omega_{\rm SK}$.
\item[(e)] The splitting $\nu_t^*\Omega^1_B=\mathcal{V}^\sharp\big|_{\mathcal{U}_t}\oplus \mathcal{N}^\sharp\big|_{\mathcal{U}_t}$ is preserved by $\nabla$.
\end{itemize}
\end{theorem}
\begin{proof}
(a) Equip $\mathcal{V}^\sharp\big|_{\mathcal{U}_t}$ with the smooth metric $h$ on $\mathbb{P}^1\backslash\nu_t^{-1}(D)$ induced by $\omega_{\rm SK}$ via $\mathcal{V}^\sharp\big|_{\mathcal{U}_t}\hookrightarrow\nu_t^*\Omega^1_B\to \Omega^1_B$.
Since $\omega_{\rm SK}$ has nonnegative bisectional curvature, the induced metric on $\Omega^1_B$ (and hence also the one on $\nu_t^*\Omega^1_B$) is Griffiths nonpositively curved, and since curvature decreases in subbundles, the metric $h$ is also Griffiths nonpositively curved.

As in \eqref{xyp}, on $\mathbb{P}^1\backslash \nu_t^{-1}(D)$ we have
\begin{equation}\label{xjp}\begin{split}
\ddbar\log|u|^2_h&=\frac{|\nabla u|^2_h}{|u|^2_h}-\frac{|\langle\nabla u,u\rangle_h|^2}{|u|^4_h}-\frac{\langle R_{h}(u),u\rangle_h}{|u|^2_h}\\
&\geq -\frac{\langle R_{h}(u),u\rangle_h}{|u|^2_h}\\
&\geq 0.
\end{split}\end{equation}
Thus $\log|u|^2_h$ is psh on $\mathbb{P}^1\backslash \nu_t^{-1}(D)$, and again using \eqref{sl} we see that
$$\sup_{\mathbb{P}^1\backslash \nu_t^{-1}(D)}\log|u|^2_h\leq C+\sup_{\mathbb{P}^1\backslash \nu_t^{-1}(D)}\log|u|^2_{\nu_t^*h_B}<\infty,$$
where $h_B$ is the smooth metric on $\Omega^1_B$ induced by $\omega_B$. Thus $\log|u|^2_h$ is bounded above, and by the Grauert-Remmert extension theorem \cite{GR} it extends to a global psh function on $\mathbb{P}^1$, which is therefore constant.

Thus $|u|^2_h$ is a nonzero constant, and from \eqref{xjp} we deduce that \begin{equation}\langle R_{h}(u),u\rangle_h=0,\end{equation} on $\mathbb{P}^1\backslash \nu_t^{-1}(D)$.
But using again the curvature decreasing property, we have
$$0=\langle R_{h}(u),u\rangle_h\leq \langle R_{\nu_t^*h_{\rm SK}}(u),u\rangle_{\nu_t^*h_{\rm SK}}\leq 0,$$
and so
\begin{equation}\langle R_{\nu_t^*h_{\rm SK}}(u),u\rangle_{\nu_t^*h_{\rm SK}}=0,\end{equation}
 on $\mathbb{P}^1\backslash \nu_t^{-1}(D)$, which proves \eqref{new1}.

(b) Since $\alpha\in TB^\circ$ is a tangent vector to $\nu_t(\mathbb{P}^1)$ and since $u$ is equal to the image of $\zeta$ under \eqref{isow}, we have
$$0=\langle R_{\nu_t^*h_{\rm SK}}(u),u\rangle_{\nu_t^*h_{\rm SK}}=-R_{\alpha\ov{\alpha}\zeta\ov{\zeta}},$$
which proves \eqref{crux}. The identity \eqref{crux2} is then a consequence of \eqref{curv}.

(c) Given $v\in H^0(\mathbb{P}^1,\mathcal{V}\big|_{\mathcal{U}_t})$ and a point $x\in \nu_t(\mathbb{P}^1)\cap B^\circ$, we can find a holomorphic family $\{\nu_s\}_{s\in\Delta}$ of rational curves in $\mathcal{K}$ that pass through $x$, with tangent vectors $\alpha_s$ at $x$ (with $\Delta\subset\mathcal{K}$ a small disc in some chart centered at our original point $t\in\mathcal{K}$), and such that $\frac{d}{ds}\big|_{s=t}\alpha_s=v(x).$ Let $w=\frac{d}{ds}\nu_s$ be the first-order deformation (holomorphic) vector field on this family. When restricted to each $\mathcal{U}_s$, $w$ is a section of
$$\nu_s^*TB^\circ=\mathcal{V}\big|_{\mathcal{U}_s}\oplus \mathcal{N}\big|_{\mathcal{U}_s}\cong\bigoplus_i \mathcal{O}(a_i)\oplus\mathcal{O}^{\oplus\ell},$$
and since $w(x)=0$, it must be a section of the $\bigoplus_i \mathcal{O}(a_i)$ factors, namely a section of $\mathcal{V}\big|_{\mathcal{U}_s}$. Pick a smooth family $U$ of $1$-forms on this family, i.e. a $C^\infty$ section of the relative cotangent bundle, with $u_s:=U|_{\mathcal{U}_s}\in H^0(\mathbb{P}^1,\mathcal{V}^\sharp\big|_{\mathcal{U}_s})$, and with $u_t=u$. Then by definition along $\nu_s$ we have
$$\iota_{w}U\big|_{\mathcal{U}_s}\equiv 0,$$
for all $s\in\Delta$,
and so along $\nu_t$ we have
$$L_wU\big|_{\mathcal{U}_t}=(d\iota_{w}U)\big|_{\mathcal{U}_t}+(\iota_w dU)\big|_{\mathcal{U}_t}=(\iota_w dU)\big|_{\mathcal{U}_t},$$
which vanishes at $x$ since $w(x)=0$.

We now use this to prove \eqref{cruxa2}, which by \eqref{curv} implies \eqref{cruxa}. For this, let $\zeta_s,s\in\Delta,$ be the smooth section of $\mathcal{N}\big|_{\mathcal{U}_s}$ over $\mathbb{P}^1\backslash\nu_s^{-1}(D)$ which maps to $u_s$ under \eqref{isow}, and recall that from \eqref{crux2} at $x$ we have
$$\Xi_x(\alpha_s,\zeta_s,\beta)=0,$$
for all $s\in\Delta$. Taking $\frac{d}{ds}\big|_{s=t}$ of this, we get
\begin{equation}\label{altern}
0=\Xi_x(v,\zeta,\beta)+\Xi_x(\alpha,L_w\zeta,\beta).
\end{equation}
Now at $x$ we have that $L_w\zeta$ is the vector that maps to $L_wU$ under \eqref{isow}, since at $x$ the metric $g_{\rm SK}$ does not get differentiated as it does not depend on $s$. Since we have shown that $(L_wU)(x)=0$, we deduce that $(L_w\zeta)(x)=0$,
and so \eqref{cruxa2} follows from \eqref{altern}.

(d) Given a section $u\in H^0(\mathbb{P}^1,\nu_t^*\mathcal{V}^\sharp)$, an analogous computation as in (a) gives
\begin{equation}\label{xjp2}\begin{split}
0=\ddbar|u|^2_h&=|\nabla u|^2_h-\langle R_{h}(u),u\rangle_h=|\nabla u|^2_h,
\end{split}\end{equation}
and so we conclude that $\nabla u=0$ on $\mathbb{P}^1\backslash\nu_t^{-1}(D)$.

(e) This is a direct consequence of part (d) and \cite[Prop. 1.4.18]{Koba}.
\end{proof}

Given $x\in\mathcal{U}^\circ$ and $v\in\mathcal{V}_x, \zeta\in\mathcal{N}_x$, recall from \eqref{ancorasplit} that
\begin{equation}\label{ankorasplit}
\mathcal{V}_x\oplus \mathcal{N}_x=T_{\mu(x)}B^\circ,
\end{equation}
so we can view $v$ and $\zeta$ also as tangent vectors in $B^\circ$. With this in mind, we have the following useful corollary:

\begin{corollary}\label{koro}
Let $x\in\mathcal{U}^\circ$, and let $v\in\mathcal{V}_x, \zeta\in\mathcal{N}_x$. Then at $\mu(x)\in B^\circ$ the curvature of the metric $g_{\rm SK}$ satisfies
\begin{equation}\label{cruxb}
R_{v\ov{v}\zeta\ov{\zeta}}=0,
\end{equation}
as well as
\begin{equation}\label{cruxb2}
\Xi(v,\zeta,\beta)=0,\quad\text{for all }\beta\in T_{\mu(x)}B^\circ.
\end{equation}
\end{corollary}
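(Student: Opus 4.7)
The plan is to deduce Corollary \ref{koro} from parts (b) and (c) of Theorem \ref{rigid} by observing that any $v \in \mathcal{V}_x$ and any $\zeta \in \mathcal{N}_x$ can be realized as the value at $\mu(x)$ of a global section along a suitable rational curve in $\mathcal{K}$ passing through $\mu(x)$. The hard analytic content is already packaged into Theorem \ref{rigid}; the role of this corollary is just to upgrade statements about sections to pointwise statements about fiber vectors, which is a matter of global generation on $\mathbb{P}^1$.

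Concretely, I would fix $x\in\mathcal{U}$, set $y=\mu(x)\in B^\circ$, and pick $\nu\in\ti{\mathcal{K}}$ with $x\in\rho^{-1}(\nu)$; by the construction of $\mathcal{U}$ and $\ti{\mathcal{K}}$, the curve $\nu(\mathbb{P}^1)$ is not contained in $D$ and passes through $y\in B^\circ$. Identifying $\rho^{-1}(\nu)\cong\mathbb{P}^1$, the splittings \eqref{h5}--\eqref{h7} yield
$$\mathcal{V}\big|_{\rho^{-1}(\nu)}\cong \bigoplus_{i=1}^{n-\ell}\mathcal{O}(a_i),\qquad \mathcal{V}^\sharp\big|_{\rho^{-1}(\nu)}\cong\mathcal{O}^{\oplus \ell},$$
with all $a_i\geq 1$, so both bundles are globally generated on $\mathbb{P}^1$. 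Hence the given $v$ extends to some $\ti v\in H^0(\mathbb{P}^1,\mathcal{V}|_{\rho^{-1}(\nu)})$ with $\ti v(y)=v$, and the vector $u_0\in\mathcal{V}^\sharp_x$ assigned to $\zeta$ by the antilinear isomorphism \eqref{isow} extends to some $\ti u\in H^0(\mathbb{P}^1,\mathcal{V}^\sharp|_{\rho^{-1}(\nu)})$. Applying the pointwise inverse of \eqref{isow} to $\ti u$ over $\mathbb{P}^1\backslash\nu^{-1}(D)$ produces a smooth section $\ti\zeta$ of $\mathcal{N}|_{\rho^{-1}(\nu)}$ with $\ti\zeta(y)=\zeta$, which is precisely of the form required by Theorem \ref{rigid}(c).

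Applying that theorem to the pair $(\ti v,\ti\zeta)$ at $y\in\nu(\mathbb{P}^1)\cap B^\circ$ gives
$$R_{\ti v\,\ov{\ti v}\,\ti\zeta\,\ov{\ti\zeta}}(y)=0,\qquad \Xi_y(\ti v,\ti\zeta,\beta)=0 \quad\text{for all } \beta\in T_yB.$$
Since $\mu:\mathcal{U}\to B^\circ$ is a local biholomorphism carrying $\mu^*g_{\rm SK}$ to $g_{\rm SK}$, and $d\mu:T_x\mathcal{U}\to T_yB^\circ$ is an isomorphism under which $v,\zeta,\beta$ correspond to $\ti v(y),\ti\zeta(y),d\mu(\beta)$, these identities transport verbatim to \eqref{cruxb} and \eqref{cruxb2} at $x$. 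There is no real obstacle here: the only thing to verify is the bookkeeping in realizing fiber vectors at $y$ as values of global sections on $\mathbb{P}^1$, which follows immediately from the positivity of the Grothendieck exponents of $\mathcal{V}|_{\rho^{-1}(\nu)}$ and the triviality of $\mathcal{V}^\sharp|_{\rho^{-1}(\nu)}$.
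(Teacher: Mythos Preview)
Your proposal is correct and follows essentially the same approach as the paper: both arguments exploit the global generation of $\mathcal{V}|_{\nu}\cong\bigoplus_i\mathcal{O}(a_i)$ and the triviality of $\mathcal{V}^\sharp|_{\nu}\cong\mathcal{O}^{\oplus\ell}$ to extend the given fiber vectors $v$ and $\zeta$ (via its image under \eqref{isow}) to global sections along the rational curve through $x$, and then invoke Theorem~\ref{rigid}(c). Your extra care in distinguishing $x\in\mathcal{U}$ from $y=\mu(x)\in B^\circ$ is fine but not essential, since the paper has already identified $T_x\mathcal{U}\cong(\mu^*TB^\circ)_x$ just before stating the corollary.
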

\begin{proof}
Let $t\in \mathcal{K}$ be such that the corresponding rational curve $\mathcal{U}_t$ contains $x$, and as usual denoted by $\nu_t:\mathbb{P}^1\to B$ the corresponding morphism. Since $\mathcal{V}\big|_{\mathcal{U}_t}\cong\oplus_i \mathcal{O}(a_i),$ $a_i>0,$ is a globally generated vector bundle, we can find a global section $V\in H^0(\mathbb{P}^1,\mathcal{V}\big|_{\mathcal{U}_t})$ such that $V(x)=v$. Let then $u\in\mathcal{V}^\sharp_x$ be the covector which is the image of $\zeta$ under \eqref{isow}. Since  $\mathcal{V}^\sharp\big|_{\mathcal{U}_t}\cong\mathcal{O}^{\oplus\ell}$ is a trivial vector bundle, we can find a global section $U\in H^0(\mathbb{P}^1,\mathcal{V}^\sharp\big|_{\mathcal{U}_t})$ such that $U(x)=u$. Then Theorem \ref{rigid} (c) applies to $U$ and $V$, and \eqref{cruxb}, \eqref{cruxb2} follow from \eqref{cruxa}, \eqref{cruxa2}.
\end{proof}

\section{The Ricci curvature in the direction of \texorpdfstring{$\mathcal{N}$}{N}}\label{se5}
Given $x\in \mathcal{U}^\circ$ and vectors $v\in\mathcal{V}_x, \zeta\in\mathcal{N}_x$ (which we can also view as tangent vectors in $T_{\mu(x)}B^\circ$ using \eqref{ankorasplit}), Corollary \ref{koro} shows that at $\mu(x)$ the Riemann curvature tensor of $g_{\rm SK}$ satisfies
$$R_{v\ov{v}\zeta\ov{\zeta}}=0.$$
As customary, we define the ``rough Laplacian'' of the Riemann curvature tensor of $g_{\rm SK}$, evaluated on $v,\zeta$ by
$$\Delta R_{v\ov{v}\zeta\ov{\zeta}}=\frac{1}{2}\left(\sum_i\nabla_i \nabla_{\ov{i}}R_{v\ov{v}\zeta\ov{\zeta}}+\sum_i \nabla_{\ov{i}}\nabla_iR_{v\ov{v}\zeta\ov{\zeta}}\right),$$
where $\{e_i\}$ is a local unitary frame.

The following is the main result of this section:

\begin{theorem}\label{sudore}
Given  $x\in \mathcal{U}^\circ$ and  $v\in\mathcal{V}_x, \zeta\in\mathcal{N}_x$, then at $\mu(x)$ we have
\begin{equation}\label{rattus}
\Delta R_{v\ov{v}\zeta\ov{\zeta}}=0,\quad R_{v\ov{\zeta}\beta\ov{\gamma}}=0,\quad \text{for all }\beta,\gamma\in T_{\mu(x)}B^\circ.
\end{equation}
\end{theorem}

Let $t\in \mathcal{K}$ be such that the corresponding rational curve $\mathcal{U}_t$ contains $x$, and as usual denoted by $\nu_t:\mathbb{P}^1\to B$ the corresponding morphism. As in the proof of Corollary \ref{koro}, we can extend $v$ to a section $v\in H^0(\mathbb{P}^1,\mathcal{V}\big|_{\mathcal{U}_t})$ and we can find a section $u\in H^0(\mathbb{P}^1,\mathcal{V}^\sharp\big|_{\mathcal{U}_t})$ such that the image of $u$ under \eqref{isow} is a smooth section $\zeta\in \mathcal{N}|_{\mathcal{U}_t}$ over $\mathbb{P}^1\backslash\nu_t^{-1}(D)$ which extends the given vector $\zeta$. The Ricci curvature $R_{\zeta\ov{\zeta}}$ along this curve and evaluated at $\zeta$ will also be denoted by $\Ric_{g_{\rm SK}}(u,\ov{u})$, which is a smooth function on $\mathbb{P}^1\backslash\nu_t^{-1}(D)$.

We wish to show that $\Ric_{g_{\rm SK}}(u,\ov{u})$ is a constant function on $\mathbb{P}^1\backslash\nu^{-1}(D)$. We will proceed in steps.

\subsection{Subharmonicity of \texorpdfstring{$\Ric_{g_{\rm SK}}(u,\ov{u})$}{Ric} }To start, we prove the following:
\begin{proposition}\label{subh}
The function $\Ric_{g_{\rm SK}}(u,\ov{u})$ on $\mathbb{P}^1\backslash\nu_t^{-1}(D)$ is subharmonic.
\end{proposition}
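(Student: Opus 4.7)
We work along $\nu$ and show that $\partial_{\bar\alpha}\partial_\alpha R_{\zeta\bar\zeta}\geq 0$, which is equivalent to subharmonicity of the function $\nu^*R_{\zeta\bar\zeta}$ on $\mathbb{P}^1\setminus\nu^{-1}(D)$. The key input from earlier in the paper is Theorem \ref{rigid}(d), giving $\nabla u=0$ along $\nu$ for the Chern connection. Since the metric isomorphism \eqref{isow} is defined through the Chern-parallel metric $g_{\rm SK}$ and hence commutes with $\nabla$, we also have $\nabla_\alpha\zeta=0=\nabla_{\bar\alpha}\zeta$ along $\nu$ (and similarly for $\bar\zeta$). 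Writing the Ricci contraction $R_{\zeta\bar\zeta}=R_{i\bar j}\zeta^i\bar\zeta^j$, the parallelism of $\zeta$ at once gives
$$\partial_{\bar\alpha}\partial_\alpha R_{\zeta\bar\zeta}=(\nabla_{\bar\alpha}\nabla_\alpha R_{i\bar j})\zeta^i\bar\zeta^j.$$

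Next, substitute the curvature formula \eqref{curv}, $R_{i\bar j}=g^{k\bar\ell}g^{p\bar q}\Xi_{ikp}\overline{\Xi_{j\ell q}}$, and use $\nabla g=0$ together with the holomorphicity of $\Xi$ (so $\nabla_{\bar\alpha}\Xi=0$ and $\nabla_\alpha\overline{\Xi}=0$) to obtain
$$\nabla_{\bar\alpha}\nabla_\alpha R_{i\bar j}=g^{k\bar\ell}g^{p\bar q}\bigl\{([\nabla_{\bar\alpha},\nabla_\alpha]\Xi_{ikp})\overline{\Xi_{j\ell q}}+(\nabla_\alpha\Xi_{ikp})\overline{\nabla_\alpha\Xi_{j\ell q}}\bigr\}.$$
Contracted with $\zeta^i\bar\zeta^j$, the second summand becomes $\sum_{k,p}|(\nabla_\alpha\Xi)(\zeta,e_k,e_p)|^2\geq 0$ in any local unitary frame $\{e_k\}$; this is the ``good'' term that is manifestly nonnegative.

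The main work is to show the commutator summand vanishes. Expand $[\nabla_{\bar\alpha},\nabla_\alpha]\Xi_{ikp}$ as the Chern curvature on $\mathrm{Sym}^3\Omega^1_B$ acting on each of the three $1$-form slots of $\Xi$, producing three contributions. The first, acting on the $i$-slot, vanishes after contraction with $\zeta^i$: by rigidity \eqref{crux} we have $R_{\alpha\bar\alpha\zeta\bar\zeta}=0$, and since $v\mapsto R_{\alpha\bar\alpha v\bar v}$ is a nonnegative Hermitian form (nonnegative bisectional curvature), Cauchy-Schwarz forces $R_{\alpha\bar\alpha i\bar k}\zeta^i=0$ for all $k$, and thus $R_{\alpha\bar\alpha i}{}^m\zeta^i=0$. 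For the remaining two contributions (equal by symmetry of $\Xi$), the very same vanishing shows that the Hermitian endomorphism $R_{\alpha\bar\alpha}$ of $TB^\circ$ has kernel containing $\mathcal{N}$, hence image contained in $\mathcal{V}=\mathcal{N}^\perp$. So in a unitary frame adapted to the splitting $\mathcal{V}\oplus\mathcal{N}$, the entries $R_{\alpha\bar\alpha k}{}^m$ can be nonzero only when both $e_k,e_m\in\mathcal{V}$. But the rigidity identity \eqref{cruxa2} tells us that $\Xi_{imp}\zeta^i=\Xi(\zeta,e_m,e_p)$ vanishes unless $e_m,e_p\in\mathcal{N}$. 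These two conditions on the summation index $m$ are mutually exclusive, so the whole contribution is zero.

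Combining everything, $\partial_{\bar\alpha}\partial_\alpha R_{\zeta\bar\zeta}=\sum_{k,p}|(\nabla_\alpha\Xi)(\zeta,e_k,e_p)|^2\geq 0$, which is the desired subharmonicity. The main obstacle is the argument in the previous paragraph: one must carefully translate the rigidity of the bisectional curvature in the $(\alpha,\zeta)$-direction into the structural statement that the Hermitian operator $R_{\alpha\bar\alpha}$ preserves the $\mathcal{V}\oplus\mathcal{N}$ splitting in a very specific way, and then match this with the support conditions on $\Xi(\zeta,\cdot,\cdot)$ imposed by the other parts of the rigidity theorem, so that the three curvature contributions all conspire to vanish.
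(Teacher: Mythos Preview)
Your proof is correct and takes a genuinely different, more direct route than the paper.

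The paper argues via a Mok-style evolution equation: it deforms the metric by $g_t=g_{\rm SK}-t\Ric_{g_{\rm SK}}$, invokes the identity $\frac{\partial}{\partial t}\big|_{t=0}R(g_t)_{v\bar v\zeta\bar\zeta}=\Delta R_{v\bar v\zeta\bar\zeta}+F(R)_{v\bar v\zeta\bar\zeta}$, and shows both summands are nonnegative (Lemma~\ref{zat}). It then differentiates the subbundle curvature inequality $\ddbar\log|u|^2_{h_t}+\langle R_{h_t}(u),u\rangle_{h_t}/|u|^2_{h_t}\geq 0$ at $t=0$ and combines these to obtain $\ddbar\Ric_{g_{\rm SK}}(u,\bar u)\geq 0$.

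Your approach bypasses the deformation entirely: from $\nabla\zeta=0$ along $\nu$ (Theorem~\ref{rigid}(d)) and holomorphicity of $\alpha$, you reduce to $(\nabla_{\bar\alpha}\nabla_\alpha R_{i\bar j})\zeta^i\bar\zeta^j$, expand via \eqref{curv} and holomorphicity of $\Xi$, and kill the commutator terms directly using that $R_{\alpha\bar\alpha\cdot\bar\cdot}$ annihilates $\mathcal{N}$ (Corollary~\ref{koro}) while $\Xi(\zeta,\cdot,\cdot)$ is supported on $\mathcal{N}\otimes\mathcal{N}$ (\eqref{cruxb2}). This is cleaner and more elementary for the proposition at hand. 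One minor remark: you cite \eqref{cruxa2}, but the pointwise statement you actually use is \eqref{cruxb2} from Corollary~\ref{koro}.

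What the paper's indirect route buys is that the same computation, once $\Ric_{g_{\rm SK}}(u,\bar u)$ is shown to be constant (Proposition~\ref{bdd}), immediately forces $\Delta R_{v\bar v\zeta\bar\zeta}=0$ and $F(R)_{v\bar v\zeta\bar\zeta}=0$, which is Theorem~\ref{sudore}. Your identity $\partial_{\bar\alpha}\partial_\alpha R_{\zeta\bar\zeta}=\sum_{k,p}|(\nabla_\alpha\Xi)(\zeta,e_k,e_p)|^2$ only yields $\nabla_\alpha\Xi(\zeta,\cdot,\cdot)=0$ in the single direction $\alpha$ tangent to the curve, so you would need additional work to recover the full rough-Laplacian vanishing needed downstream.
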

\begin{proof}
On $B^\circ$  define for $0\leq s\ll 1$
$$g_s = g_{\rm SK} - s \Ric_{g_{\rm SK}}.$$
It is clear that given any compact $K\Subset B^\circ$ there is some $0<s_K\ll 1$ such that $g_s$ is a K\"ahler metric on $K$ for $0\leq s\leq s_K$.

Standard direct computations (cf. \cite[P.185]{Mok}) show that given any $x\in B^\circ$ and two nonzero $(1,0)$ tangent vectors $v,\zeta$ at $x$,
we have the evolution equation at $x$ and $s=0$ for the bisectional curvature of $g_s$ evaluated along $v$ and $\zeta$
\begin{equation}\label{evolviti}
\frac{\de}{\de s}\bigg|_{s=0}R(g_s)_{v\ov{v}\zeta\ov{\zeta}}=\Delta R_{v\ov{v}\zeta\ov{\zeta}}+F(R)_{v\ov{v}\zeta\ov{\zeta}},
\end{equation}
where, as in Mok \cite{Mok}, we define
$$F(R)_{v\ov{v}\zeta\ov{\zeta}}=\sum_{\mu,\nu}R_{v\ov{v}\mu\ov{\nu}}R_{\zeta\ov{\zeta}\nu\ov{\mu}}-\sum_{\mu,\nu}|R_{v\ov{\mu}\zeta\ov{\nu}}|^2
+\sum_{\mu,\nu}|R_{v\ov{\zeta}\mu\ov{\nu}}|^2-\mathrm{Re}\left(R_{v\ov{\mu}}R_{\mu\ov{v}\zeta\ov{\zeta}}+R_{\zeta\ov{\mu}}R_{v\ov{v}\mu\ov{\zeta}}\right).$$
Equation \eqref{evolviti} is identical to the corresponding evolution of the bisectional curvature in the directions $v,\zeta$ along the K\"ahler-Ricci flow, see \cite{Mok}. Thanks to the crucial Lemma \ref{zat} below, we see that
\begin{equation}\label{final}
\frac{\de}{\de s}\bigg|_{s=0}R(g_s)_{v\ov{v}\zeta\ov{\zeta}}(x)\geq 0.
\end{equation}
Equip $\nu_t^*\Omega^1_B$ over the compact set $\nu_t^{-1}(K)$ with the Hermitian metric $h_s$ induced by $g_s$. At any point $y\in \nu_t^{-1}(K)$ for $0\leq s\leq s_K$, using the argument in \eqref{xjp}, we have
\begin{equation}\label{sat}
\ddbar\log|u|^2_{h_s}+\frac{\langle R_{h_s}(u),u\rangle_{h_s}}{|u|^2_{h_s}}\geq 0.
\end{equation}
We know from Theorem \ref{rigid} (d), that $u$ is parallel with respect to $h_0$ (the metric induced by $g_{\rm SK}$), hence (assuming without loss that $u$ is nontrivial) we can scale and assume without loss that $|u|^2_{h_0}\equiv 1$ on $\mathbb{P}^1\backslash\nu_t^{-1}(D)$. On the other hand, from Theorem \ref{rigid} (a), we know that  \eqref{new1} holds, and so $$\langle R_{h_0}(u),u\rangle_{h_0}=0.$$ Thus the LHS of \eqref{sat} vanishes at $y$ for $s=0$ and is nonnegative for $0\leq s\leq s_K$, hence at $y$ we have
\begin{equation}\label{zatt}\begin{split}
0&\leq \frac{\de}{\de s}\bigg|_{s=0}\left(\ddbar\log|u|^2_{h_s}+\frac{\langle R_{h_s}(u),u\rangle_{h_s}}{|u|^2_{h_s}}\right)\\
&=\ddbar\left(\frac{\de}{\de s}\bigg|_{s=0}|u|^2_{h_s}\right)+\frac{\de}{\de t}\bigg|_{t=0}\langle R_{h_s}(u),u\rangle_{h_s},
\end{split}\end{equation}
and writing $u=u_j dz^j$ and $|u|^2_{h_s}=u_i \ov{u_{j}}g_s^{i\ov{j}}$, observe that
$$\frac{\de}{\de s}\bigg|_{s=0}\left(u_i \ov{u_{j}}g_s^{i\ov{j}}\right)=-u_i \ov{u_{j}}g_{\rm SK}^{i\ov{s}}g_{\rm SK}^{r\ov{j}}\frac{\de}{\de s}\bigg|_{s=0}g_{s,r\ov{s}}
=u_i \ov{u_{j}}g_{\rm SK}^{i\ov{s}}g_{\rm SK}^{r\ov{j}}R_{r\ov{s}}=\Ric_{g_{\rm SK}}(u,\ov{u}).$$
Furthermore, we can write
$$\langle R_{h_s}(u),u\rangle_{h_s}=-R(g_s)_{v\ov{v}i\ov{j}}g_s^{i\ov{q}} g_s^{p\ov{j}}u_p\ov{u_q},$$
so
$$\frac{\de}{\de s}\bigg|_{s=0}\langle R_{h_s}(u),u\rangle_{h_s}=-\frac{\de}{\de s}\bigg|_{s=0}R(g_s)_{v\ov{v}\zeta\ov{\zeta}}-
R_{v\ov{v}i\ov{\zeta}}R_{\zeta\ov{i}}-R_{v\ov{v}\zeta\ov{i}}R_{\ov{\zeta}i},$$
but the last two terms vanish since using \eqref{curv} and \eqref{cruxa2}, we can write
$$R_{v\ov{v}i\ov{\zeta}}=\Xi_{v i q}\ov{\Xi_{v\zeta q}}=0, \quad R_{v\ov{v}\zeta\ov{i}}=\Xi_{v\zeta q}\ov{\Xi_{v i q}}=0,$$
and putting these all together gives
\begin{equation}\label{zatt2}\begin{split}
0&\leq\ddbar\left(\Ric_{g_{\rm SK}}(u,\ov{u})\right)-\frac{\de}{\de s}\bigg|_{s=0}R(g_s)_{v\ov{v}\zeta\ov{\zeta}}\\
&\leq \ddbar\left(\Ric_{g_{\rm SK}}(u,\ov{u})\right),
\end{split}\end{equation}
using \eqref{final}. Since  $K\Subset B^\circ$ is arbitrary, this shows that the function $\Ric_{g_{\rm SK}}(u,\ov{u})$ is subharmonic on $\mathbb{P}^1\backslash \nu_t^{-1}(D)$.
\end{proof}

We used the following lemma, which is the analog of ``condition $(\sharp)$'' in Mok, but the proof here is substantially easier:

\begin{lemma}\label{zat} In the setting of Theorem \ref{sudore}, at $\mu(x)$ we have
$$\Delta R_{v\ov{v}\zeta\ov{\zeta}}\geq 0,\quad F(R)_{v\ov{v}\zeta\ov{\zeta}}\geq 0.$$
\end{lemma}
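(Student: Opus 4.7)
The plan is to treat the two inequalities by separate methods. The nonnegativity of $F(R)_{v\ov{v}\zeta\ov{\zeta}}$ reduces to a purely algebraic unpacking using Freed's formula \eqref{curv} together with \eqref{cruxb2}, whereas the nonnegativity of $\Delta R_{v\ov{v}\zeta\ov{\zeta}}$ follows from a minimum-principle argument combined with a normal-coordinate computation.

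\textbf{First inequality.} I would work in a $g_{\rm SK}$-unitary frame $\{e_p\}$ at $x$ and substitute $R_{i\ov{j}k\ov{\ell}}=\sum_p\Xi_{ikp}\ov{\Xi_{j\ell p}}$ from \eqref{curv} into each of the four summands of $F(R)_{v\ov{v}\zeta\ov{\zeta}}$. The identity $\Xi_{v\zeta\beta}=0$ for all $\beta$ from \eqref{cruxb2} immediately kills $R_{v\ov{\mu}\zeta\ov{\nu}}=\sum_p\Xi_{v\zeta p}\ov{\Xi_{\mu\nu p}}=0$, so the second summand $-\sum_{\mu,\nu}|R_{v\ov{\mu}\zeta\ov{\nu}}|^2$ vanishes; the same reason kills both $R_{\mu\ov{v}\zeta\ov{\zeta}}$ and $R_{v\ov{v}\mu\ov{\zeta}}$ in the final bracket. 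The term $+\sum_{\mu,\nu}|R_{v\ov{\zeta}\mu\ov{\nu}}|^2$ is trivially nonnegative, while the first term rearranges as
$$\sum_{\mu,\nu}R_{v\ov{v}\mu\ov{\nu}}R_{\zeta\ov{\zeta}\nu\ov{\mu}}=\sum_{p,q}\left|\sum_\mu\Xi_{v\mu p}\ov{\Xi_{\zeta\mu q}}\right|^2\geq 0.$$

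\textbf{Second inequality.} I would fix K\"ahler normal coordinates for $g_{\rm SK}$ centered at $x$ and extend $v,\zeta$ to vector fields $V=v^i\frac{\de}{\de z_i},Z=\zeta^i\frac{\de}{\de z_i}$ with constant coefficients. The scalar function
$$f:=R_{i\ov{j}k\ov{\ell}}v^i\ov{v^j}\zeta^k\ov{\zeta^\ell}$$
is a bisectional curvature of $g_{\rm SK}$, hence $f\geq 0$ on $\mathcal{U}$, while Corollary \ref{koro} yields $f(x)=0$. Therefore $f$ attains a local minimum at $x$, and $\sum_p\partial_p\partial_{\ov{p}}f(x)\geq 0$. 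It remains to identify this quantity with $\Delta R_{v\ov{v}\zeta\ov{\zeta}}(x)$. A direct computation in K\"ahler normal coordinates, using the identities $\partial_p\Gamma^{\ov{s}}_{\ov{q}\ov{j}}(x)=-R_{s\ov{j}p\ov{q}}(x)$ and $\partial_{\ov{q}}\Gamma^s_{pi}(x)=-R_{i\ov{s}p\ov{q}}(x)$, expands $\tfrac{1}{2}(\nabla_p\nabla_{\ov{q}}+\nabla_{\ov{q}}\nabla_p)R_{i\ov{j}k\ov{\ell}}(x)$ as $\partial_p\partial_{\ov{q}}R_{i\ov{j}k\ov{\ell}}(x)$ plus a correction quadratic in the Riemann tensor. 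When contracted with $g_{\rm SK}^{p\ov{q}}v^i\ov{v^j}\zeta^k\ov{\zeta^\ell}$, each term of this correction contains exactly one of the factors $R_{v\ov{s}\zeta\ov{\zeta}}$, $R_{s\ov{v}\zeta\ov{\zeta}}$, $R_{v\ov{v}\zeta\ov{s}}$, $R_{v\ov{v}s\ov{\zeta}}$, and every one of these vanishes by \eqref{curv} and \eqref{cruxb2} since the associated $\Xi$-factor involves $\Xi_{v\zeta\cdot}$. Hence $\Delta R_{v\ov{v}\zeta\ov{\zeta}}(x)=\sum_p\partial_p\partial_{\ov{p}}f(x)\geq 0$.

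\textbf{Main difficulty.} The first step is algebra and poses no obstacle. The real bookkeeping is in the second step, where one must verify that every quadratic curvature correction produced by converting the rough Laplacian of $R$ into the scalar Laplacian of $f$ vanishes via \eqref{cruxb2}. This plays the role of Mok's condition $(\sharp)$ in \cite{Mok}, but is considerably cleaner in our special K\"ahler setting, since the single vanishing $\Xi_{v\zeta\cdot}=0$ forces a large family of vanishings at the level of the Riemann tensor.
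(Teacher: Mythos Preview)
Your proposal is correct, but both halves are argued differently from the paper.

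For $F(R)_{v\ov{v}\zeta\ov{\zeta}}$, you and the paper agree on the first reduction: $\Xi_{v\zeta\cdot}=0$ kills $R_{v\ov{\mu}\zeta\ov{\nu}}$ and the two Ricci terms, leaving only $\sum_{\mu,\nu}R_{v\ov{v}\mu\ov{\nu}}R_{\zeta\ov{\zeta}\nu\ov{\mu}}+\sum_{\mu,\nu}|R_{v\ov{\zeta}\mu\ov{\nu}}|^2$. For the first of these the paper follows Mok and diagonalizes the Hermitian form $H_v(\mu,\nu)=R_{v\ov{v}\mu\ov{\nu}}$, reducing to a sum of products of nonnegative bisectional curvatures. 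Your substitution of \eqref{curv} to write it directly as $\sum_{p,q}\bigl|\sum_\mu\Xi_{v\mu p}\ov{\Xi_{\zeta\mu q}}\bigr|^2$ is a neater, specifically special-K\"ahler shortcut that bypasses the eigenbasis step.

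For $\Delta R_{v\ov{v}\zeta\ov{\zeta}}$, your minimum-principle route is valid: the constant-coefficient extensions $V,Z$ remain $(1,0)$ vectors, so $f\geq 0$ with $f(x)=0$ forces $\sum_p\partial_p\partial_{\ov{p}}f(x)\geq 0$, and the quadratic curvature corrections arising from $\partial_{\ov{q}}\Gamma^s_{pi}$, $\partial_p\Gamma^{\ov{s}}_{\ov{q}\ov{j}}$ indeed all contain a factor of the form $R_{v\ov{s}\zeta\ov{\zeta}}$, $R_{s\ov{v}\zeta\ov{\zeta}}$, $R_{v\ov{v}s\ov{\zeta}}$, or $R_{v\ov{v}\zeta\ov{s}}$, each of which vanishes via \eqref{curv} and $\Xi_{v\zeta\cdot}=0$. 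The paper instead differentiates \eqref{curv} directly, using that $\Xi$ is holomorphic, to obtain the closed formula
\[
\Delta R_{v\ov{v}\zeta\ov{\zeta}}=\sum_{i,p}|\nabla_i\Xi_{v\zeta p}|^2+\sum_{i,p}\mathrm{Re}\bigl(\ov{\Xi_{v\zeta p}}\,\nabla_{\ov{i}}\nabla_i\Xi_{v\zeta p}\bigr)=\sum_{i,p}|\nabla_i\Xi_{v\zeta p}|^2\geq 0,
\]
the cross term dying because $\Xi_{v\zeta p}(x)=0$. This is shorter, avoids all normal-coordinate bookkeeping, and yields an explicit sum-of-squares expression rather than a bare inequality; your argument, on the other hand, is the one that would survive in a setting without the $\Xi$-representation (e.g.\ general nonnegative bisectional curvature plus condition $(\sharp)$, as in Mok).
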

\begin{proof}
Recall from \eqref{curv} that
$$R_{i\ov{j}k\ov{\ell}}=g_{\rm SK}^{p\ov{q}}\Xi_{ikp}\ov{\Xi_{j\ell q}}.$$
From \eqref{cruxb} we then see that at $\mu(x)$ we have
$$0=R_{v\ov{v}\zeta\ov{\zeta}}=\sum_\beta|\Xi_{v\zeta\beta}|^2,$$
and so $\Xi_{v\zeta\beta}(x)=0$ for all $\beta\in T_{\mu(x)}B^\circ$, and furthermore for all $\mu,\nu\in T_{\mu(x)}B^\circ$
$$0=\sum_\beta \Xi_{v\zeta\beta}\ov{\Xi_{\mu\nu\beta}}=R_{v\ov{\mu}\zeta\ov{\nu}}.$$

Now take the definition of $\Delta R$ and use \eqref{curv} and the fact that $\Xi$ is holomorphic to get
$$\Delta R_{v\ov{v}\zeta\ov{\zeta}}=\sum_{i,p}|\nabla_i \Xi_{v\zeta p}|^2+\sum_{i,p}\mathrm{Re}(\ov{\Xi_{v\zeta p}}\nabla_{\ov{i}}\nabla_i\Xi_{v\zeta p})=\sum_{i,p}|\nabla_i \Xi_{v\zeta p}|^2\geq 0,$$
since $\Xi_{v\zeta p}(x)=0$.
For the $F(R)$ term, from its definition we see that at $\mu(x)$ we have
$$F(R)_{v\ov{v}\zeta\ov{\zeta}}=\sum_{\mu,\nu}R_{v\ov{v}\mu\ov{\nu}}R_{\zeta\ov{\zeta}\nu\ov{\mu}}
+\sum_{\mu,\nu}|R_{v\ov{\zeta}\mu\ov{\nu}}|^2.$$
As in Mok \cite[(7)]{Mok}, if we pick $\{e_\mu\}$ a unitary basis of eigenvectors of the Hermitian form $H_v(\mu,\nu)=R_{v\ov{v}\mu\ov{\nu}}$, then in this basis we see that
\begin{equation}\label{musmusculus}
F(R)_{v\ov{v}\zeta\ov{\zeta}}=\sum_{\mu}R_{v\ov{v}\mu\ov{\mu}}R_{\zeta\ov{\zeta}\mu\ov{\mu}}
+\sum_{\mu,\nu}|R_{v\ov{\zeta}\mu\ov{\nu}}|^2\geq 0.
\end{equation}
\end{proof}

\subsection{Constancy of \texorpdfstring{$\Ric_{g_{\rm SK}}(u,\ov{u})$}{Ric} }The next step is the following:

\begin{proposition}\label{bdd}
The function $\Ric_{g_{\rm SK}}(u,\ov{u})$ on $\mathbb{P}^1\backslash\nu_t^{-1}(D)$ is constant.
\end{proposition}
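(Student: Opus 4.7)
The plan is to extend $F := \Ric_{g_{\rm SK}}(u,\ov{u})$, which is subharmonic on $\mathbb{P}^1\setminus \nu^{-1}(D)$ by Proposition \ref{subh} (and nonnegative since $\Ric_{g_{\rm SK}}\ge 0$), to a subharmonic function $\widetilde F$ on all of $\mathbb{P}^1$. Once such an extension is in hand, pairing $\widetilde F$ against $1$ and integrating by parts on the closed Riemann surface $\mathbb{P}^1$ shows that the positive measure $\Delta \widetilde F$ has total mass zero, and therefore vanishes identically. Hence $\widetilde F$ is harmonic on $\mathbb{P}^1$, and so constant by the maximum principle, giving the claim.

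The technical heart of the proof is to show that $F$ is locally bounded above near each $p\in\nu^{-1}(D)$, which is what the classical Grauert--Remmert removable singularity theorem for subharmonic functions requires to produce $\widetilde F$. Near such a $p$, the curve $\nu$ meets the smooth part of $D$ transversally, so in local holomorphic coordinates $(z_1,\ldots,z_n)$ on $B$ near $\nu(p)$ with $D=\{z_1=0\}$, Proposition \ref{riccicurb} gives $R_{i\ov{i}}\le C$ for $i\ge 2$ and $R_{1\ov{1}}\le C/|z_1|^2$, while Cauchy--Schwarz applied to the nonnegative Hermitian form $\Ric_{g_{\rm SK}}$ controls the mixed components $|R_{i\ov{j}}|\le \sqrt{R_{i\ov{i}}R_{j\ov{j}}}$. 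Writing $\zeta$ for the $g_{\rm SK}$-dual of $u$, so that $F = R_{i\ov{j}}\zeta^i\ov{\zeta^j}$, the problem reduces to controlling the decay of $\zeta^1$ as $z_1\to 0$.

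Two structural facts drive this decay. First, $|\zeta|^2_{g_{\rm SK}}=|u|^2_{h_{\rm SK}}$ is a positive constant along $\mathbb{P}^1\setminus\nu^{-1}(D)$ by Theorem \ref{rigid}(a); second, $\zeta\in\mathcal{N}$ is $g_{\rm SK}$-orthogonal to the tangent $\alpha$ of $\nu$ (since $\alpha\in\mathcal{V}$), and $\alpha^1\neq 0$ near $p$ by transversality. Combining this orthogonality with the growth of $g_{\rm SK,1\ov{1}}$ in the normal direction to $D$---forced by the period matrix asymptotics $\mathrm{Im}\,Z_{jk}(t)\sim -(b_{jk}/2\pi)\log|t_1|$ from Schmid's Nilpotent Orbit Theorem that was already exploited in the proof of Proposition \ref{riccicurb}---the identity $g_{\rm SK}(\zeta,\ov{\alpha})=0$ forces $\zeta^1\to 0$, and this vanishing is exactly what is needed to cancel the blowup of $R_{1\ov{1}}$ and the mixed terms.

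The main obstacle is the sharpness of this last estimate. A naive orthogonality argument yields $|\zeta^1|=O(1/|\log|z_1||)$, which combined with $R_{1\ov{1}}\le C/|z_1|^2$ gives only $F=O(1/(|z_1|^2\log^2|z_1|))$ along $\nu$, locally integrable but not locally bounded above. Obtaining genuine local boundedness will likely require invoking the finer asymptotic description of the special K\"ahler metric near $D$ from \cite{TZ} (in particular the expected sharp bound $R_{1\ov{1}}\le C/(|z_1|^2\log^2|z_1|)$ mentioned in the remark after Proposition \ref{riccicurb}), or replacing the classical removable singularity theorem by a more delicate $L^1_{\rm loc}$ extension argument that exploits both $F\ge 0$ and the refined inequality $\ddbar F \ge \Delta R_{v\ov{v}\zeta\ov{\zeta}}+F(R)_{v\ov{v}\zeta\ov{\zeta}}\ge 0$ from the proof of Proposition \ref{subh} to rule out negative Dirac contributions to the distributional Laplacian at the singular points.
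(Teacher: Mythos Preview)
Your overall framework is correct: reduce to boundedness near $\nu^{-1}(D)$, then extend and apply Liouville. You also correctly identify that the issue is controlling the transversal component $\zeta^1$ against the blowup of $R_{1\ov{1}}$. However, the proposal has a genuine gap that you yourself flag: the metric orthogonality $g_{\rm SK}(\zeta,\ov{\alpha})=0$ only yields $|\zeta^1|=O(1/|\log|z_1||)$, and neither the conjectural sharp bound on $R_{1\ov{1}}$ nor an $L^1$ extension trick is what closes it.

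The missing idea is to replace metric orthogonality by \emph{Ricci orthogonality}. From the structural vanishing $\Xi(v,\zeta,\beta)=0$ for $v\in\mathcal{V},\ \zeta\in\mathcal{N}$ (Corollary \ref{koro}), one computes directly that
\[
R_{v\ov{\zeta}}=\sum_{p,q}\Xi_{vpq}\ov{\Xi_{\zeta pq}}=0,
\]
by splitting the unitary frame along $\mathcal{V}\oplus\mathcal{N}$. Since the tangent direction $\de_1$ to $\nu$ lies in $\mathcal{V}$, this gives $R_{1\ov{\zeta}}=0$, i.e.
\[
\ov{\zeta^1}\,R_{1\ov{1}}=-R_{1\ov{\zeta_D}}.
\]
Now apply Cauchy--Schwarz to the nonnegative form $\Ric_{g_{\rm SK}}$ exactly as you suggested, but to this identity rather than to $g_{\rm SK}$: using $R_{\zeta_D\ov{\zeta_D}}\leq C$ (from \eqref{ricci1} and the smooth boundedness of $\zeta_D$ via \eqref{sl}) one gets
\[
|\zeta^1|\,R_{1\ov{1}}\leq R_{1\ov{1}}^{1/2}R_{\zeta_D\ov{\zeta_D}}^{1/2}\leq C\,R_{1\ov{1}}^{1/2},
\]
hence $|\zeta^1|^2R_{1\ov{1}}\leq C$. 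This bounds $R_{\zeta\ov{\zeta}}\leq C|\zeta^1|^2R_{1\ov{1}}+CR_{\zeta_D\ov{\zeta_D}}\leq C$ outright, using only the crude estimate \eqref{ricci2}. No refinement of the Ricci asymptotics and no distributional extension argument is needed.
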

\begin{proof}
Since the function $\Ric_{g_{\rm SK}}(u,\ov{u})$ on $\mathbb{P}^1\backslash\nu_t^{-1}(D)$ is subharmonic by Proposition \ref{subh}, it suffices to show that it is bounded.

Recall that, using \eqref{ankorasplit}, our sections $v,\zeta$ can be viewed as vector fields along $\nu_t(\mathbb{P}^1)\cap B^\circ$.
Our first claim is that for every $y\in\nu_t(\mathbb{P}^1)\cap B^\circ$ and local sections $v$ of $\mathcal{V}$ and $\zeta$ of $\mathcal{N}$ near $y$, we have
\begin{equation}\label{perp}
R_{v\ov{\zeta}}=0.
\end{equation}
Indeed, recall from \eqref{curv} that
$$R_{v\ov{\zeta}}=\sum_{p,q} \Xi_{v pq}\ov{\Xi_{\zeta pq}},$$
where $\{e_p\}$ is a $g_{\rm SK}$-unitary frame at our point $y$. Since $\mu^*TB^\circ=\mathcal{V}\oplus\mathcal{N}$, we may choose the frame so that
$e_j\in\mathcal{V}$ for $1\leq j\leq n-\ell,$ and $e_j\in\mathcal{N}$ for $n-\ell+1\leq j\leq n$. Recalling from \eqref{cruxb2} that $\Xi_{uvw}=0$ whenever $u\in\mathcal{V}$ and $v\in\mathcal{N}$, we see that $\Xi_{v pq}=0$ except possibly when $1\leq p,q\leq n-\ell$, so that
$$R_{v\ov{\zeta}}=\sum_{p,q=1}^{n-\ell} \Xi_{v pq}\ov{\Xi_{\zeta pq}}=0,$$
since $\Xi_{\zeta pq}=0$ when  $1\leq p,q\leq n-\ell$, proving our claim.

Recall that, as explained earlier, we may assume that $\nu_t(\mathbb{P}^1)$ intersects $D$ only at regular points of $D$ and that these intersections are transverse.
To prove the boundedness of $\Ric_{g_{\rm SK}}(u,\ov{u})$ it suffices to prove near any of the finitely many points in $\nu_t^{-1}(D)$. Let $y$ be such a point, and choose an open neighborhood $U$ of $z=\nu_t(y)$ in $B$ with local holomorphic coordinates centered at $z$ such that $D\cap U=\{z_1=0\}$ and $\nu_t(\mathbb{P}^1)\cap U=\{z_2=\cdots=z_n=0\}$, so that $\de_1$ is tangent to the rational curve while $\de_2,\dots,\de_n$ are tangent to $D$. We will work on $\nu_t^{-1}(U\cap\{z_1\neq 0\})$ which in our chart is identified with $\{z_1\neq 0, z_2=\cdots=z_n=0\}=:V$.

Thanks to Proposition \ref{riccicurb} we know that on $V$ we have
\begin{equation}\label{ricci3}
0\leq R_{i\ov{i}}\leq C,\quad 2\leq i\leq n,
\end{equation}
\begin{equation}\label{ricci4}
0\leq R_{1\ov{1}}\leq \frac{C}{|z_1|^2}.
\end{equation}

Using \eqref{sl}, together with the fact that $u$ is a holomorphic section on all of $\mathbb{P}^1$, we see that
\begin{equation}\label{l1}
\sup_{V}|\zeta|^2_{\nu_t^*g_B}\leq C\sup_{V}|u|^2_{\nu_t^*g_B}<\infty.
\end{equation}
In our coordinates we can write
$$\zeta=\zeta^1\de_1+\sum_{j\geq 2}\zeta^j\de_j=:\zeta^1\de_1+\zeta_D,$$
and the function $\zeta^1$ is equal to $\langle dz_1,u\rangle_{g_{\rm SK}}$. From \eqref{l1} we see that
\begin{equation}\label{l2}
\sup_{V}|\zeta_D|^2_{\nu_t^*g_{B}}<\infty,
\end{equation}
and from this and \eqref{ricci3} we see that on $V$ we have
\begin{equation}\label{l3}
0\leq  R_{\zeta_D\ov{\zeta_D}}\leq C.
\end{equation}

Since $\de_1$ is the tangent vector to $\nu_t(\mathbb{P}^1)$, it belongs to $\mathcal{V}\big|_{\mathcal{U}_t}$. On the other hand $\zeta$ belongs to $\mathcal{N}\big|_{\mathcal{U}_t}$, hence \eqref{perp} (restricted to $V$) gives
$$0=R_{1\ov{\zeta}}(z_1)=\ov{\zeta^1(z_1)}R_{1\ov{1}}(z_1)+R_{1\ov{\zeta_D}}(z_1),$$
and since $\Ric_{g_{\rm SK}}\geq 0$ on $\{z_1\neq 0\}$, Cauchy-Schwarz together with \eqref{ricci3} and \eqref{l3} give
$$|\zeta^1(z_1)| R_{1\ov{1}}(z_1)=|R_{1\ov{\zeta_D}}(z_1)|\leq R_{1\ov{1}}(z_1)^{\frac{1}{2}} R_{\zeta_D\ov{\zeta_D}}(z_1)^{\frac{1}{2}}\leq CR_{1\ov{1}}(z_1)^{\frac{1}{2}},$$
i.e.
$$|\zeta^1(z_1)|  R_{1\ov{1}}(z_1)^{\frac{1}{2}}\leq C,$$
and using again that  $\Ric_{g_{\rm SK}}\geq 0$, together with \eqref{l3} we can estimate
\[\begin{split}
0\leq R_{\zeta\ov{\zeta}}(z_1)&\leq C|\zeta^1(z_1)|^2 R_{1\ov{1}}(z_1)+C R_{\zeta_D\ov{\zeta_D}}(z_1)\leq C,
\end{split}\]
as desired.
\end{proof}

We can now conclude the proof of Theorem \ref{sudore}, by showing that at $\mu(x)$ we have
\begin{equation}\label{zazz}\Delta R_{v\ov{v}\zeta\ov{\zeta}}=0,\quad F(R)_{v\ov{v}\zeta\ov{\zeta}}= 0.
\end{equation}
Indeed, Proposition \ref{bdd} shows that the function $\Ric_{g_{\rm SK}}(u,\ov{u})$ on $\mathbb{P}^1\backslash\nu_t^{-1}(D)$ is constant, hence going back to \eqref{zatt2} and recalling \eqref{final} and \eqref{evolviti} shows that
$$0=\frac{\de}{\de s}\bigg|_{s=0}R(g_s)_{v\ov{v}\zeta\ov{\zeta}}=\Delta R_{v\ov{v}\zeta\ov{\zeta}}+F(R)_{v\ov{v}\zeta\ov{\zeta}}.$$
Recalling Lemma \ref{zat}, we see that \eqref{zazz} holds. To finally deduce from \eqref{zazz} that the last equality in \eqref{rattus} holds, it suffices to plug in the fact that $F(R)_{v\ov{v}\zeta\ov{\zeta}}= 0$ into \eqref{musmusculus}, and see that
$$\sum_{\mu,\nu}|R_{v\ov{\zeta}\mu\ov{\nu}}|^2=0,$$
as desired.

\section{Constructing a parallel subbundle of \texorpdfstring{$\mu^*TB^\circ$}{TB}}\label{se6}
Recall that above we have constructed a decomposition $\mu^*TB^\circ=\mathcal{V}\oplus \mathcal{N}$ over $\mathcal{U}^\circ$, where $\mathcal{V}$ is a nontrivial holomorphic subbundle, which is not equal to $\mu^*TB^\circ$ whenever $B\not\cong\mathbb{P}^n$.

The following is then our main theorem (Theorem \ref{mainn}):
\begin{theorem}\label{parall}
The holomorphic subbundle $\mathcal{V}\subset \mu^*TB^\circ$ over $\mathcal{U}^\circ$ is preserved by $\nabla$, the pullback of the Levi-Civita connection of $\omega_{\rm SK}$.
\end{theorem}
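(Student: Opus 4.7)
The plan is to reduce the statement to showing that the second fundamental form of $\mathcal{V}\subset\mu^*TB^\circ$ vanishes. Since $\mathcal{V}$ is a holomorphic subbundle of a holomorphic bundle, this second fundamental form is automatically of type $(1,0)$ with values in $\mathrm{Hom}(\mathcal{V},\mathcal{N})$, and by metric compatibility $\mathcal{V}$ parallel forces $\mathcal{N}$ parallel as well. I work throughout in the maximal-variation case, where $\Ric_{g_{\rm SK}}>0$ on $B^\circ$; the isotrivial case, where $\Xi\equiv 0$ and the arguments below degenerate, is handled separately using the flatness of $\omega_{\rm SK}$.

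The first ingredient I would extract from Section \ref{se5}, beyond the pointwise identity $\Xi(v,\zeta,\cdot)=0$ of Corollary \ref{koro}, is its first-order refinement: the computation in the proof of Lemma \ref{zat} gives $\Delta R_{v\bar v\zeta\bar\zeta}=\sum_{i,p}|\nabla_i\Xi_{v\zeta p}|^2$ at any $x\in\mathcal{U}$, and combined with the vanishing $\Delta R_{v\bar v\zeta\bar\zeta}=0$ from Theorem \ref{sudore} this forces $(\nabla_Z\Xi)(v,\zeta,Y)|_x=0$ whenever $v\in\mathcal{V}_x$, $\zeta\in\mathcal{N}_x$ and $Z,Y\in T^{1,0}_x\mathcal{U}$. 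I would then extend $v$ to a local holomorphic section of $\mathcal{V}$ and $\zeta$ to a smooth local section of $\mathcal{N}$, so that $\Xi(v,\zeta,Y)\equiv 0$ identically near $x$ for any vector field $Y$, and differentiate along $X\in T^{1,0}_x\mathcal{U}$. The Leibniz rule produces four terms: $(\nabla_X\Xi)(v,\zeta,Y)$ vanishes by the first-order rigidity just established, and $\Xi(v,\zeta,\nabla_X Y)$ vanishes pointwise by Corollary \ref{koro}, so
\begin{equation*}
\Xi(\nabla_X v,\zeta,Y)+\Xi(v,\nabla_X\zeta,Y)=0
\end{equation*}
at $x$. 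Splitting the covariant derivatives along $\mathcal{V}\oplus\mathcal{N}$ and repeatedly using $\Xi(\mathcal{V},\mathcal{N},\cdot)=0$ collapses this to $\Xi((\nabla_X v)^{\mathcal{N}},\zeta,Y)+\Xi(v,(\nabla_X\zeta)^{\mathcal{V}},Y)=0$, and restricting to $Y\in\mathcal{N}_x$ annihilates the second term, leaving
\begin{equation*}
\Xi_{\mathcal{N}}\bigl((\nabla_X v)^{\mathcal{N}},\zeta,Y\bigr)=0\quad\text{for all }\zeta,Y\in\mathcal{N}_x,
\end{equation*}
where $\Xi_{\mathcal{N}}$ denotes the restriction of $\Xi$ to $\mathcal{N}^{\otimes 3}$.

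To finish, I would convert this algebraic vanishing into $(\nabla_X v)^{\mathcal{N}}=0$ using Ricci positivity. In a unitary frame adapted to $\mathcal{V}\oplus\mathcal{N}$, the curvature formula \eqref{curv} together with $\Xi(\mathcal{V},\mathcal{N},\cdot)=0$ yields
\begin{equation*}
\Ric_{g_{\rm SK}}(\eta,\bar\eta)=\sum_{i,p\in\mathcal{N}}|\Xi_{\mathcal{N}}(\eta,e_i,e_p)|^2
\end{equation*}
for $\eta\in\mathcal{N}$, so the strict positivity of $\Ric_{g_{\rm SK}}$ on $B^\circ$ makes the contraction $\eta\mapsto\Xi_{\mathcal{N}}(\eta,\cdot,\cdot)\in\mathrm{Sym}^2\mathcal{N}^*$ injective; applied to $\eta=(\nabla_X v)^{\mathcal{N}}$ this forces the latter to vanish at $x$. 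The main conceptual obstacle is the recognition that Section \ref{se5} delivers not only the pointwise rigidity $R_{v\bar v\zeta\bar\zeta}=0$ but also the covariant-derivative identity $\nabla\Xi|_{\mathcal{V}\times\mathcal{N}\times TB}=0$; once that sharpening is in hand, the remainder is linear algebra, driven by the fact that $\Ric_{g_{\rm SK}}$ is precisely the quadratic form whose kernel detects degeneracies of the contraction with $\Xi_{\mathcal{N}}$ exploited above.
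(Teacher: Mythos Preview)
Your treatment of the maximal variation case is correct and takes a genuinely different route from the paper. The paper adapts Mok's parallel-transport expansion: it parallel-transports $v$ and $\zeta'$ along a curve, expands $R_{\beta(t)\ov{\beta(t)}\chi(t)\ov{\chi(t)}}$ to second order, and combines the inequality $\nabla^2_{ww}R_{v\ov{v}\zeta'\ov{\zeta'}}\geq 0$ with the averaged vanishing $\Delta R_{v\ov{v}\zeta'\ov{\zeta'}}=0$ from Theorem~\ref{sudore} to force $R_{\zeta\ov{\zeta}\zeta'\ov{\zeta'}}=0$ for $\zeta=-(\nabla_\eta v)^{\mathcal{N}}$; only then does it pass to $\Xi$ and invoke Ricci positivity. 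You instead read off from the equality case of Lemma~\ref{zat} that $(\nabla_Z\Xi)(v,\zeta,Y)=0$, differentiate the pointwise identity $\Xi(v,\zeta,Y)\equiv 0$ directly, and land on $\Xi_{\mathcal{N}}((\nabla_X v)^{\mathcal{N}},\cdot,\cdot)=0$ without any second-order curvature expansion. This is cleaner and exploits the special K\"ahler factorization $R=\Xi\ov{\Xi}$ more sharply; the paper's version stays closer to Mok's original framework and would generalize more readily to settings where no such factorization is available.

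The isotrivial case, however, is a genuine gap in your proposal. You say it ``is handled separately using the flatness of $\omega_{\rm SK}$,'' but flatness alone does not make $\mathcal{V}$ parallel: when $\Xi\equiv 0$ your entire mechanism collapses (both the first-order rigidity and the Ricci injectivity vanish), and there is no algebraic constraint left to force $(\nabla_X v)^{\mathcal{N}}=0$. The paper devotes all of \S 6.2 to this case, and the argument is not short: one must produce $\nabla$-parallel frames of $\mathcal{V}^\sharp$ along each rational curve in a one-parameter family, show (using flatness to commute covariant derivatives) that the $\mathcal{N}^\sharp$-component of $\nabla_V u_i$ is itself parallel along the central curve, and then prove a nontrivial extension result (Proposition~6.4) showing that parallel sections of $\nu^*\Omega^1_B$ extend holomorphically across $\nu^{-1}(D)$, using the volume asymptotics \eqref{f2} of $\omega_{\rm SK}$ near $D$. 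Only after extension does the Grothendieck splitting force this component to vanish. None of this is suggested by ``using the flatness,'' so your sketch as written does not cover the isotrivial case.
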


Recall that by Theorem \ref{prolisso2} $f$ is either of maximal variation or isotrivial. The proof of Theorem \ref{parall} will be quite different in these two cases.

Observe that after Theorem \ref{parall} is proved, it follows that the orthogonal complement $\mathcal{N}\subset \mu^*TB^\circ$ is also a holomorphic subbundle, preserved by $\nabla$, see e.g. \cite[Prop. 1.4.18]{Koba}, and the same holds for their duals $\mathcal{V}^\sharp,\mathcal{N}^\sharp\subset\mu^*\Omega^1_{B^\circ}$.

\subsection{Maximal Variation Case}
In this section we give the proof of Theorem \ref{parall} in the case when $f$ has maximal variation. Recall from Corollary \ref{prolisso} that in this case $g_{\rm SK}$ has positive Ricci curvature on $B^\circ$.

We work at a point $x\in \mathcal{U}^\circ$. Let $v$ be a local holomorphic section of $\mathcal{V}$ near $x$,
and let $\gamma:(-\ve,\ve)\to \mathcal{U}^\circ$ be a smooth curve with $\gamma(0)=x,\dot{\gamma}(0)=\eta\neq 0$. The goal of Theorem \ref{parall} is then to show that $\nabla_\eta v\in\mathcal{V}$. Using the decomposition $\mu^*TB^\circ=\mathcal{V}\oplus\mathcal{N}$, we can write
$$\nabla_\eta v=-\xi-\zeta,\quad \xi\in \mathcal{V}_x, \zeta\in\mathcal{N}_x,$$
(the minus sign is only to match the notation in Mok \cite{Mok}), so we wish to show that $\zeta=0$.
The following argument is a modification of a result of Mok \cite[Proposition 3.1']{Mok}, specifically of equation (21) on p.211:
\begin{proposition}
At $\mu(x)$ we have
\begin{equation}\label{vanish}
R_{\zeta\ov{\zeta}\zeta'\ov{\zeta'}}=0,
\end{equation}
for all $\zeta'\in\mathcal{N}_x$.
\end{proposition}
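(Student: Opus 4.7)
The plan is to differentiate the pointwise identity $\Xi(v,\zeta',\beta)\equiv 0$ from Corollary \ref{koro} \eqref{cruxb2} along the direction $\eta$ at $x$, and then use Theorem \ref{sudore} to kill the resulting $(\nabla_\eta\Xi)$-term. The remaining first-order identity, combined with a further application of \eqref{cruxb2}, will force $\Xi(\zeta,\zeta',\beta)=0$ for every $\beta\in T_x\mathcal{U}$, and then the curvature formula \eqref{curv} gives $R_{\zeta\bar\zeta\zeta'\bar{\zeta'}}(x)=\sum_p|\Xi(\zeta,\zeta',e_p)|^2=0$ at once.

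Concretely, fix $\zeta'\in\mathcal{N}_x$ and $\beta\in T_x\mathcal{U}$. Extend $v$ to a local holomorphic section of $\mathcal{V}$, extend $\zeta'$ to a smooth local section of $\mathcal{N}$, and extend $\beta$ to any smooth vector field. Because $\Xi(v(y),\zeta'(y),\beta(y))\equiv 0$ in a neighborhood of $x$, the Leibniz expansion of its $\nabla_\eta$-derivative at $x$ reads
\[
0 = (\nabla_\eta\Xi)(v,\zeta',\beta) + \Xi(\nabla_\eta v,\zeta',\beta) + \Xi(v,\nabla_\eta\zeta',\beta) + \Xi(v,\zeta',\nabla_\eta\beta).
\]
Write $\nabla_\eta v = -\xi - \zeta$ with $\xi\in\mathcal{V}_x$ and $\zeta\in\mathcal{N}_x$ the vector in the statement, and $\nabla_\eta\zeta' = \xi' + w'$ with $\xi'\in\mathcal{V}_x, w'\in\mathcal{N}_x$. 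Three of the four terms drop out at $x$: the contributions $\Xi(\xi,\zeta',\beta)$, $\Xi(v,w',\beta)$, and $\Xi(v,\zeta',\nabla_\eta\beta)$ all vanish by \eqref{cruxb2} (each has one argument in $\mathcal{V}$ and another in $\mathcal{N}$), while $(\nabla_\eta\Xi)(v,\zeta',\beta)$ vanishes as a tensorial consequence of Theorem \ref{sudore} --- whose proof via Lemma \ref{zat} shows $\Delta R_{v\bar v\zeta''\bar{\zeta''}} = \sum_{i,p}|\nabla_i\Xi_{v\zeta''p}|^2$ at $x$, so that $\Delta R=0$ forces each $\nabla_i\Xi_{v\zeta''p}(x)$ to vanish. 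What survives is
\[
\Xi(\zeta,\zeta',\beta) = \Xi(v,\xi',\beta) \qquad\text{at } x.
\]
Now split $\beta = \beta_\mathcal{V} + \beta_\mathcal{N}$: when $\beta_\mathcal{V}\in\mathcal{V}_x$ the left side vanishes by \eqref{cruxb2} (since $\zeta\in\mathcal{N}, \beta_\mathcal{V}\in\mathcal{V}$), and when $\beta_\mathcal{N}\in\mathcal{N}_x$ the right side vanishes by \eqref{cruxb2} (since $v\in\mathcal{V}, \beta_\mathcal{N}\in\mathcal{N}$). Thus $\Xi(\zeta,\zeta',\beta)(x)=0$ for every $\beta\in T_x\mathcal{U}$, and the claim follows from \eqref{curv}.

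The key technical point --- besides Theorem \ref{sudore} itself --- is the upgrade from the scalar statement $\Delta R_{v\bar v\zeta''\bar{\zeta''}}(x)=0$ to the tensorial vanishing $(\nabla_\alpha\Xi)(v,\zeta'',\beta)(x)=0$ used here. This upgrade is essentially free: the pointwise vanishing of $\Xi_{v\zeta''p}$ at $x$ from Corollary \ref{koro} eliminates the cross terms in the expansion of $\Delta R$ via \eqref{curv}, collapsing the Laplacian into a sum of moduli-squared so that each summand vanishes independently. Everything else is bookkeeping: the rigidity \eqref{cruxb2} eliminates all but one of the Leibniz contributions, and the residual identity has exactly the right $\mathcal{V}$-versus-$\mathcal{N}$ structure for a second application of \eqref{cruxb2} (in either slot of $\beta$) to close the argument.
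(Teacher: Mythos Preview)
Your proof is correct and takes a genuinely different route from the paper's. The paper follows Mok's second-variation template: it parallel-transports $v$ and $\zeta'$ along $\gamma$, Taylor-expands $R_{\beta(t)\overline{\beta(t)}\chi(t)\overline{\chi(t)}}$ to second order in $t$, and uses the nonnegativity of bisectional curvature together with the vanishing $R_{v\bar v\zeta'\bar{\zeta'}}=0$ to obtain the inequality $\nabla^2_{\eta\eta}R_{v\bar v\zeta'\bar{\zeta'}}\geq R_{\zeta\bar\zeta\zeta'\bar{\zeta'}}$. Then, since $\nabla^2_{ww}R_{v\bar v\zeta'\bar{\zeta'}}\geq 0$ for every real $w$ and its trace $\Delta R_{v\bar v\zeta'\bar{\zeta'}}$ vanishes by Theorem~\ref{sudore}, each Hessian term vanishes, giving $R_{\zeta\bar\zeta\zeta'\bar{\zeta'}}\leq 0$ and hence $=0$.

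Your approach is more algebraic and more specific to the special K\"ahler setting: you exploit that the computation in Lemma~\ref{zat} actually upgrades $\Delta R_{v\bar v\zeta'\bar{\zeta'}}=0$ to the pointwise vanishing $(\nabla_\alpha\Xi)(v,\zeta',\beta)=0$ for all $(1,0)$ directions $\alpha$ (and hence for the real direction $\eta$, since $\Xi$ is holomorphic so $\nabla^{0,1}\Xi=0$). Differentiating the identically-zero function $\Xi(v,\zeta',\beta)$ then yields $\Xi(\zeta,\zeta',\beta)=\Xi(v,\xi',\beta)$, and a final split of $\beta$ along $\mathcal{V}\oplus\mathcal{N}$ kills both sides. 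This bypasses the parallel-transport expansion and the convexity/averaging step entirely; the price is that it leans squarely on the factorization $R=\Xi\cdot\bar\Xi$, which is available here but not in Mok's general setting. The paper's argument, by contrast, would survive with only the weaker scalar information $\Delta R_{v\bar v\zeta'\bar{\zeta'}}=0$ and nonnegative bisectional curvature, without extracting the first-order tensorial vanishing of $\nabla\Xi$.
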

Here and in the following we are again using \eqref{ankorasplit} to view $\zeta,\zeta'$ also as tangent vectors in $T_{\mu(x)}B^\circ$.
Also, since $\nabla$ is the pullback connection, when taking $\nabla_v$ for some $v\in T\mathcal{U}^\circ$ it is really only $\mu_*(v)\in TB^\circ$ that enters.
\begin{proof}
For $t\in (-\ve,\ve)$, let $\beta(t)$ be the parallel transport of $v(x)$ along $\gamma$, let $v(t)=v|_{\gamma(t)}$, and define $\xi(t),\zeta(t)$ by
$$\beta(t)=v(t)+t\xi(t)+t\zeta(t),\quad \xi(t)\in \mathcal{V}_{\gamma(t)}, \zeta(t)\in\mathcal{N}_{\gamma(t)},$$
so that
$$0=\nabla_\eta\beta(0)=\nabla_\eta v+\xi(0)+\zeta(0),$$
and so we see that $\xi(0)=\xi,\zeta(0)=\zeta$. Given an arbitrary $\zeta'\in\mathcal{N}_x$, let $\chi(t)$ be the parallel transport of $\zeta'$ along $\gamma$, so that $\chi(0)=\zeta'$ and $\nabla_{\dot{\gamma}(t)}\chi(t)=0$. We can also write
$$\chi(t)=\zeta'(t)+t\theta(t),\quad \zeta'(t)\in \mathcal{N}_{\gamma(t)}, \theta(t)\in\mathcal{V}_{\gamma(t)},$$
and $\zeta'(0)=\zeta'$. We can expand at the point $\gamma(t)$
\[\begin{split}
R_{\beta(t)\ov{\beta(t)}\chi(t)\ov{\chi(t)}}&=R_{v\ov{v}\zeta'\ov{\zeta'}}+t\left(2\mathrm{Re}R_{v\ov{v}\zeta'\ov{\theta}}+2\mathrm{Re}R_{v\ov{\xi}\zeta'\ov{\zeta'}}+
2\mathrm{Re}R_{v\ov{\zeta}\zeta'\ov{\zeta'}}\right)\\
&+t^2\Big(R_{v\ov{v}\theta\ov{\theta}}+2\mathrm{Re}R_{v\ov{\xi}\zeta'\ov{\theta}}+2\mathrm{Re}R_{v\ov{\zeta}\zeta'\ov{\theta}}
+2\mathrm{Re}R_{v\ov{\xi}\theta\ov{\zeta'}}+2\mathrm{Re}R_{v\ov{\zeta}\theta\ov{\zeta'}}\\
&\quad\quad+R_{\xi\ov{\xi}\zeta'\ov{\zeta'}}+R_{\zeta\ov{\zeta}\zeta'\ov{\zeta'}}+2\mathrm{Re}R_{\xi\ov{\zeta}\zeta'\ov{\zeta'}}\Big)+O(t^3),
\end{split}\]
where $O(t^3)$ denotes a vector-valued function of length bounded above by $Ct^3$. Recalling \eqref{curv}, we can express the curvature tensor in terms of $\Xi$, and since $\Xi(v,\zeta',\beta)=\Xi(\xi,\zeta',\beta)=0$ for all $\zeta'\in\mathcal{N}_x$ and all $\beta\in T_{\mu(x)}B^\circ$ (by Corollary \ref{koro}), many terms in this expansion vanish. Using furthermore that $R_{v\ov{\zeta}\beta\ov{\delta}}=0$ for all $\beta,\delta\in T_{\mu(x)}B^\circ$ (by Theorem \ref{sudore}), the expression finally reduces to
\[\begin{split}
R_{\beta(t)\ov{\beta(t)}\chi(t)\ov{\chi(t)}}&=t^2\Big(R_{v\ov{v}\theta\ov{\theta}}+R_{\zeta\ov{\zeta}\zeta'\ov{\zeta'}}\Big)+O(t^3).
\end{split}\]
Defining (similarly to Mok)
$$A=R_{v\ov{v}\theta\ov{\theta}}+R_{\zeta\ov{\zeta}\zeta'\ov{\zeta'}},$$
and since the bisectional curvature is nonnegative, we have $R_{v\ov{v}\theta\ov{\theta}}\geq 0$, and so
\begin{equation}\label{ena}
A\geq R_{\zeta\ov{\zeta}\zeta'\ov{\zeta'}}.
\end{equation}
At this point notice that
\begin{equation}\label{dva}
A=\frac{1}{2}\frac{d^2}{dt^2}\bigg|_{t=0}R_{\beta(t)\ov{\beta(t)}\chi(t)\ov{\chi(t)}}=\nabla^2_{\eta\eta}R_{v\ov{v}\zeta'\ov{\zeta'}},
\end{equation}
using that $\beta(t),\chi(t)$ are parallel along $\gamma$ and that $\nabla$ is a pullback connection. On the other hand we claim that at $x$ we have
$$\nabla^2_{ww}R_{v\ov{v}\zeta'\ov{\zeta'}}\geq 0,$$ for all real tangent vectors $w$ at $x$. Indeed, pick a curve in $\mathcal{U}^\circ$ passing through $x$ and tangent to $w$, and let $\ti{v}(t),\ti{\zeta}'(t)$ be the parallel transport of $v,\zeta'$ along this curve, then
$R_{\ti{v}(t)\ov{\ti{v}(t)}\ti{\zeta}'(t)\ov{\ti{\zeta}'(t)}}\geq 0,$ and $R_{v\ov{v}\zeta'\ov{\zeta'}}=0$ by Corollary \ref{koro}, and so
$$0\leq \frac{d^2}{dt^2}\bigg|_{t=0}R_{\ti{v}(t)\ov{\ti{v}(t)}\ti{\zeta}'(t)\ov{\ti{\zeta}'(t)}}=\nabla^2_{ww}R_{v\ov{v}\zeta'\ov{\zeta'}},$$
as claimed. But recall that Theorem \ref{sudore} showed that $\Delta R_{v\ov{v}\zeta'\zeta'}=0$, and since this is an average of terms of the form  $\nabla^2_{ww}R_{v\ov{v}\zeta'\ov{\zeta'}}$ as $\mu_*(w)$ varies among all $g_{\rm SK}$-unit tangent vectors at $\mu(x)$, we see that necessarily $\nabla^2_{ww}R_{v\ov{v}\zeta'\ov{\zeta'}}=0$ for all $w$. Using \eqref{ena} and \eqref{dva} we get
$$0=\nabla^2_{\eta\eta}R_{v\ov{v}\zeta'\ov{\zeta'}}=2A\geq R_{\zeta\ov{\zeta}\zeta'\ov{\zeta'}}\geq 0,$$
which proves \eqref{vanish}.
\end{proof}

Now that \eqref{vanish} is established, we can show that $\zeta=0$ as follows: combining \eqref{vanish} with \eqref{curv} gives
$$\Xi(\zeta,\zeta',\beta)=0,$$
for all $\beta\in T_{\mu(x)}B^\circ$ and all $\zeta'\in\mathcal{N}_x$. But thanks to Corollary \ref{koro} we also have
$$\Xi(\zeta,\mu,\beta)=0,$$
for all $\beta\in T_{\mu(x)}B^\circ$ and all $\mu\in\mathcal{V}_x,$ and since $T_{\mu(x)}B^\circ\cong \mathcal{V}_x\oplus\mathcal{N}_x$, it follows that
$$\Xi(\zeta,\mu,\beta)=0,$$
for all $\mu, \beta\in T_{\mu(x)}B^\circ$. From the formula for the curvature tensor,
$$\Ric_{g_{\rm SK}}(\zeta,\ov{\zeta})=\sum_{p,q}|\Xi(\zeta,e_p,e_q)|^2=0.$$
Since we assume $f$ of maximal variation, $\Ric_{g_{\rm SK}}>0$ on $B^\circ$, and so $\zeta=0$. This concludes the proof of Theorem \ref{parall} when $f$ has maximal variation.

\subsection{Isotrivial Case}
In this section we give the proof of Theorem \ref{parall} in the case when $f$ is isotrivial, and so $\omega_{\rm SK}$ is flat by Corollary \ref{prolisso}. We wish to show that the subbundle $\mathcal{V}\subset \mu^*TB^\circ$ is parallel under $\nabla$, and by duality this is equivalent to showing that $\mathcal{V}^\sharp\subset \mu^*\Omega^1_{B^\circ}$ is parallel under $\nabla$. Recall that $\rho:\mathcal{U}\to\mathcal{K}$ is a $\mathbb{P}^1$-bundle. Thus, given $x\in \mathcal{U}^\circ$ and $v\in T_x\mathcal{U}^\circ$, we can decompose $T_x\mathcal{U}^\circ$ as the direct sum of the tangent line to the vertical $\mathbb{P}^1$ direction and a complementary subspace, and thus write
$v=v_1+v_2,$ where $v_1$ is tangent to a rational curve $\mathcal{U}_t$ (for some $t\in\mathcal{K}$) that contains $x$ (which on $\mathcal{U}_t$ corresponds to a point $y\in\mathbb{P}^1$) and $v_2$ is transverse to $\mathcal{U}_t$. The rational curve morphism will be as usual denoted by $\nu_t:\mathbb{P}^1\to B$. We may also assume that $\nu_t(\mathbb{P}^1)$ intersects $D$ only at regular points of $D$.
Since $\nu_t$ is free, we can deform it in a $1$-parameter family $\pi:\mathbb{P}^1\times\Delta\to B$,  with $s\in\Delta$ (where $\Delta\subset\mathcal{K}$ is a small disc in some chart centered at $t\in\mathcal{K}$), such that $\nu_s:=\pi(\cdot, s):\mathbb{P}^1\to B$ are rational curves in $\mathcal{K}$ which are also not contained in $D$ and such that the first order deformation vector $\frac{\de}{\de s}\big|_{s=t}\nu_s\in H^0(\mathbb{P}^1,\nu_t^*TB)$ agrees with $v_2$ at $x$. Up to shrinking $\Delta$, we have a natural inclusion $\sigma:\mathbb{P}^1\times \Delta\hookrightarrow \mathcal{U}$ such that $\mu\circ\sigma=\pi$. The intersection $\sigma(\mathbb{P}^1\times \Delta)\cap \mathcal{U}^\circ$ is Zariski open in $\sigma(\mathbb{P}^1\times \Delta)$ and contains the point $(y,0)$.

We then choose a smooth $(1,0)$ vector field $V$ on $\mathbb{P}^1\times\Delta$ which restricted to $\mathbb{P}^1\times\{0\}$ is the first order deformation vector, and so it satisfies
$d\sigma_{(y,0)}(V)=v_2$. To prove that $\mathcal{V}^\sharp$ is preserved by $\nabla_v$ at $x$, it will suffice to construct a smooth frame $u_1,\dots, u_\ell$ for $\sigma^*\mathcal{V}^\sharp$ over $\mathbb{P}^1\times\Delta$ such that
\begin{equation}\label{desire}
(\nabla_V u_i)(y,0)\in\sigma^*\mathcal{V}^\sharp_x,\quad 1\leq i\leq \ell,
\end{equation}
where $\nabla$ also denotes the pullback connection, since by Theorem \ref{rigid} (d) we have that along $\nu_t$
$$(\nabla_{v_1} u_i)(x)=0.$$

For every $s\in\Delta$, $\sigma^*\mathcal{V}^\sharp\big|_{\mathbb{P}^1\times \{s\}}$ is a trivial vector bundle of rank $\ell$ over $\nu_s$, which over $\mathbb{P}^1\backslash\nu_s^{-1}(D)$ is equipped with the metric induced by $\omega_{\rm SK}$. For each $s\in\Delta$ we can then choose a global holomorphic frame $u_1(s),\dots, u_\ell(s)\in H^0(\mathbb{P}^1, \sigma^*\mathcal{V}^\sharp\big|_{\mathbb{P}^1\times \{s\}})$, smoothly dependent on $s\in\Delta$. Thanks to Theorem \ref{rigid} (d), each $u_i(s)$ is parallel (with respect to the connection induced by $\omega_{\rm SK}$) over $\mathbb{P}^1\backslash\nu_s^{-1}(D)$. Varying $s$, these sections define a smooth frame $u_1,\dots,u_\ell$ of $\sigma^*\mathcal{V}^\sharp$ over $\mathbb{P}^1\times\Delta$, which is parallel when restricted to each $(\mathbb{P}^1\times\{s\})\cap\pi^{-1}(B^\circ)$.
Fix now any $1\leq i\leq \ell$,
and recall that
\begin{equation}\label{splitting}
\pi^*\Omega^1_{B^\circ}=\sigma^*\mathcal{V}^\sharp\oplus\sigma^*\mathcal{N}^\sharp,
\end{equation}
where $\mathcal{N}^\sharp$ is the annihilator of $\mathcal{N}\subset \mu^*TB^\circ$. Let $\mathcal{P}$ be the $g_{\rm SK}$-orthogonal projection onto the $\sigma^*\mathcal{N}^\sharp$ factor, which is defined on $\pi^{-1}(B^\circ)$ and consider
$$\mathcal{P}(\nabla_V u_i),$$
a smooth section of $\sigma^*\mathcal{N}^\sharp\subset\pi^*\Omega^1_{B^\circ}$ over $\pi^{-1}(B^\circ)$. Let also $\iota:\mathbb{P}^1\hookrightarrow \mathbb{P}^1\times \Delta$ be the embedding $z\mapsto (z,0)$, so $\pi\circ\iota=\nu_t$.

\begin{lemma}The pullback $\iota^*(\mathcal{P}(\nabla_V u_i))$ to $\mathbb{P}^1\backslash\nu_t^{-1}(D)$ is a parallel section of $\mathcal{N}^\sharp\big|_{\mathcal{U}_t}$ over $\mathbb{P}^1\backslash\nu_t^{-1}(D)$.
\end{lemma}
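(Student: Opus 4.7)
The plan is to exploit the flatness of $\omega_{\rm SK}$ in the isotrivial case (Theorem \ref{prolisso}) to commute a pair of covariant derivatives, combined with the slicewise parallelism of the $u_i$ from Theorem \ref{rigid}(d).

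First I would work in product holomorphic coordinates $(z,s)$ on $\mathbb{P}^1\times\Delta$ and, without loss of generality, take the auxiliary vector field $V$ to be $\frac{\de}{\de s}$: since the family $\{\nu_s\}$ was already chosen so that $\frac{d}{ds}\big|_{s=0}\nu_s(y)=v_2$, this $V$ meets the required normalization $d\sigma_{(y,0)}(V)=v_2$. Setting $\alpha=\frac{\de}{\de z}$, we then have $[\alpha,V]\equiv 0$ on $\mathbb{P}^1\times\Delta$. Since $f$ is isotrivial, Theorem \ref{prolisso} ensures that $\omega_{\rm SK}$ is flat on $B^\circ$, so the Chern connection $\nabla$ has vanishing curvature, and hence the induced connection on $\pi^*\Omega^1_{B^\circ}$ is flat over $\pi^{-1}(B^\circ)$. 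Combined with $[\alpha,V]=0$, the standard commutation identity for a flat connection yields
$$\nabla_\alpha \nabla_V u_i = \nabla_V \nabla_\alpha u_i$$
as sections of $\pi^*\Omega^1_{B^\circ}$ over $\pi^{-1}(B^\circ)$.

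Next, by the very construction of the frame $u_1,\dots,u_\ell$, each $u_i(s)$ is holomorphic and, by Theorem \ref{rigid}(d) applied to the rational curve $\nu_s$, parallel along the slice $(\mathbb{P}^1\times\{s\})\cap\pi^{-1}(B^\circ)$; varying $s$, this reads as $\nabla_\alpha u_i\equiv 0$ on $\pi^{-1}(B^\circ)$. Therefore $\nabla_V\nabla_\alpha u_i=0$ there, and the commutation identity from the previous paragraph gives $\nabla_\alpha \nabla_V u_i = 0$ on $\pi^{-1}(B^\circ)$. Restricting to $\iota(\mathbb{P}^1\backslash\nu^{-1}(D))$, we see that $\iota^*(\nabla_V u_i)$ is already parallel along $\nu$ as a full section of $\nu^*\Omega^1_B$, not merely its $\mathcal{N}^\sharp$-component.

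Finally, by Theorem \ref{rigid}(e) the splitting $\nu^*\Omega^1_B=\mathcal{V}^\sharp|_\nu\oplus\mathcal{N}^\sharp|_\nu$ is preserved by $\nabla$ on $\mathbb{P}^1\backslash\nu^{-1}(D)$, so the orthogonal projection $\iota^*\mathcal{P}$ onto $\mathcal{N}^\sharp|_\nu$ commutes with $\iota^*\nabla_\alpha$ there. Consequently
$$\iota^*\nabla_\alpha\bigl(\iota^*\mathcal{P}(\nabla_V u_i)\bigr)=\iota^*\mathcal{P}\bigl(\iota^*\nabla_\alpha \nabla_V u_i\bigr)=0,$$
which is the asserted parallelism. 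The plan contains no serious obstacle: it is a direct combination of the flatness of $\omega_{\rm SK}$, the rigidity results of Theorem \ref{rigid}, and the commutativity $[\alpha,V]=0$ ensured by the choice $V=\frac{\de}{\de s}$.
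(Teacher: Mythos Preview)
Your proof is correct and follows essentially the same approach as the paper: use flatness of $\omega_{\rm SK}$ to commute covariant derivatives, invoke Theorem \ref{rigid}(d) for slicewise parallelism of $u_i$, and use Theorem \ref{rigid}(e) to pass through the projection $\mathcal{P}$. The only cosmetic differences are that you specialize $V=\partial_s$ so that $[\alpha,V]=0$ outright (the paper keeps $V$ general and observes $[W,V]$ is still tangent to the $\mathbb{P}^1$ factor, hence annihilates $u_i$), and you commute derivatives before projecting rather than after.
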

\begin{proof}We work at an arbitrary point in $\mathbb{P}^1\backslash\nu_t^{-1}(D)$, let $W$ be any local holomorphic vector field near our point which is tangent to the $\mathbb{P}^1$ factor.
Since the splitting $$\nu_s^*\Omega^1_{B^\circ}=\mathcal{V}^\sharp\big|_{\nu_s}\oplus\mathcal{N}^\sharp\big|_{\nu_s}$$ is preserved by $\nabla$ (by Theorem \ref{rigid} (e)), and since $g_{\rm SK}$ is flat, we have
\[\begin{split}
\nabla_{W}(\iota^*(\mathcal{P}(\nabla_V u_i)))&=\iota^*(\nabla_{W}(\mathcal{P}(\nabla_V u_i)))=
\iota^*(\mathcal{P}(\nabla_W\nabla_V u_i))\\
&=\iota^*(\mathcal{P}(\nabla_V \nabla_W u_i+\nabla_{[W,V]} u_i)).
\end{split}\]
Now, since $u_i$ is parallel along the rational curve $\nu_s(\mathbb{P}^1)\backslash D$ for all $s\in\mathbb{C}$, we have that $\nabla_W u_i$ vanishes identically on $U\times \Delta$ and so
$$\nabla_V \nabla_W u_i=0.$$
Furthermore,
$[W,V]=-L_VW$ is also tangent to $\nu_t(\mathbb{P}^1)$, so $\nabla_{[W,V]}u_i=0$ too.
\end{proof}

Since  $\iota^*(\mathcal{P}(\nabla_V u_i))$ is parallel, it is in particular holomorphic  over $\mathbb{P}^1\backslash\nu_t^{-1}(D)$. The following Lemma then implies that  $\iota^*(\mathcal{P}(\nabla_V u_i))$ extends to a holomorphic section of $\nu_t^*\Omega^1_B$ over $\mathbb{P}^1$:

\begin{proposition}
Let $w\in H^0(\mathbb{P}^1\backslash \nu_t^{-1}(D), \nu_t^*\Omega^1_B)$ be a holomorphic section which is parallel with respect to $\nabla$ (the Chern connection induced by $\omega_{\rm SK}$). Then $w$ extends to a holomorphic section of $\nu_t^*\Omega^1_B$ over all of $\mathbb{P}^1$.
\end{proposition}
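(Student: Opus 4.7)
My strategy is to use the $\nabla$-invariant splitting $\nu^*\Omega^1_B = \mathcal{V}^\sharp|_\nu \oplus \mathcal{N}^\sharp|_\nu$ over $\mathbb{P}^1 \backslash \nu^{-1}(D)$ from Theorem \ref{rigid}(e) to decompose $w = w_V + w_N$ into $\nabla$-parallel holomorphic summands and treat each separately. Since $\nabla w = 0$ forces $|w|^2_{\nu^*h_{\rm SK}}$ to be constant on $\mathbb{P}^1 \backslash \nu^{-1}(D)$, and the splitting is $h_{\rm SK}$-orthogonal, the individual norms $|w_V|^2_{h_{\rm SK}}$ and $|w_N|^2_{h_{\rm SK}}$ are also constants.

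For the $w_V$ piece: the bundle $\mathcal{V}^\sharp|_\nu \cong \mathcal{O}^{\oplus\ell}$ is globally defined on all of $\mathbb{P}^1$, and by Theorem \ref{rigid}(d) each of its $\ell$ global holomorphic sections is $\nabla$-parallel. These sections span every fiber, so they provide a flat trivialization of $\mathcal{V}^\sharp|_\nu$ with trivial monodromy over $\mathbb{P}^1 \backslash \nu^{-1}(D)$, and hence the $\ell$-dimensional space of $\nabla$-parallel sections on the punctured base coincides with $H^0(\mathbb{P}^1, \mathcal{V}^\sharp|_\nu)$. In particular, $w_V$ extends holomorphically to $\mathbb{P}^1$.

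For the $w_N$ piece, the goal is to show $w_N \equiv 0$. The inclusion $\mathcal{V}^\sharp|_\nu \subset \nu^*\Omega^1_B$ is defined globally on $\mathbb{P}^1$, so projecting $w_N$ to the globally defined holomorphic quotient $\mathcal{Q} := \nu^*\Omega^1_B / \mathcal{V}^\sharp|_\nu \cong \bigoplus_{i=1}^{n-\ell}\mathcal{O}(-a_i)$ gives a holomorphic section $\bar w_N \in H^0(\mathbb{P}^1 \backslash \nu^{-1}(D), \mathcal{Q})$. Since $a_i \geq 1$ for all $i$, we have $H^0(\mathbb{P}^1, \mathcal{Q}) = 0$, so if $\bar w_N$ extends to a holomorphic section of $\mathcal{Q}$ over $\mathbb{P}^1$, it must vanish; this forces $w_N \in \mathcal{V}^\sharp|_\nu \cap \mathcal{N}^\sharp|_\nu = 0$.

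The hard part is thus extending $\bar w_N$ across each puncture $y \in \nu^{-1}(D)$. I expect to exploit the regular-singular nature of the Chern connection $\nabla$ at $y$: on the branched cover used in the proof of Proposition \ref{riccicurb}, the period matrix admits the explicit expansion $\mathrm{Im}\,Z(t) = \mathrm{Im}\,Q(t) - \frac{b}{2\pi}\log|t_1|$ with $Q$ holomorphic and $b$ nilpotent (by the Monodromy Theorem), and together with the bound $(\mathrm{Im}\,Z(t))^{-1} \leq C\,\mathrm{Id}$ from Lemma 4.3 of \cite{TZ} this should force the Chern connection form of $g_{\rm SK}$ to have a $1/t_1$-type singularity with nilpotent residue. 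Frobenius theory for regular-singular ODEs with nilpotent residue then yields $\nabla$-parallel sections which grow at most polynomially in $|\log|\zeta||$ near $y$, and a holomorphic section on a punctured disc with such log-polynomial growth has vanishing negative Laurent coefficients and therefore extends holomorphically across $y$. Extracting the nilpotent-residue normal form of the Chern connection (rather than the Gauss-Manin connection) from $g_{\rm SK}$ is the technical heart of the argument.
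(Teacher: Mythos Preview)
Your route is quite different from the paper's and has a genuine gap in the $w_N$ step. The paper does not decompose $w$ at all: since $w$ is $\nabla$-parallel, the pointwise norm $|w|^2_{\nu^*g_{\rm SK}}$ is a constant, and the whole argument is to compare $g_{\rm SK}$ with a smooth reference metric $g_B$ near each puncture. From \eqref{sl} and the fiber-integral volume asymptotics one gets on a chart $\{z_1\neq 0\}$ the crude two-sided bound $C^{-1}\omega_B\leq \omega_{\rm SK}\leq C|z_1|^{-2(1-\gamma)}(-\log|z_1|)^C\omega_B$ for some $\gamma\in(0,1]$; dualizing gives $|w|^2_{\nu^*g_B}\leq C|z_1|^{-2(1-\gamma)}(-\log|z_1|)^C$, which is $L^1$ on the punctured disc. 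Hence in a holomorphic trivialization of $\nu^*\Omega^1_B$ each component of $w$ is a holomorphic function on the punctured disc that lies in $L^2$, and the Riemann/$L^2$-extension theorem yields the holomorphic extension across the puncture. No splitting, no ODE analysis.

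Your $w_V$ argument is fine, but the $w_N$ step does not go through as written. The assertion that the Chern connection of $h_{\rm SK}$ has a $1/t_1$-type singularity with \emph{nilpotent} residue conflates the Gauss--Manin picture with the metric picture. In the special frame the connection form on $\Omega^1_B$ is $-(\mathrm{Im}\,Z)^{-1}\partial(\mathrm{Im}\,Z)$; plugging in $\mathrm{Im}\,Z(t)=\mathrm{Im}\,Q(t)-\tfrac{b}{2\pi}\log|t_1|$ gives a $\tfrac{dt_1}{t_1}$-coefficient proportional to $(\mathrm{Im}\,Z(t))^{-1}b$. Two things go wrong: (i) this coefficient still depends on $\log|t_1|$ through $(\mathrm{Im}\,Z)^{-1}$, so the connection is \emph{not} meromorphic and standard Frobenius theory for regular-singular ODEs does not apply; (ii) the matrix $b$ arising from the nilpotent orbit theorem is real symmetric (indeed positive semidefinite, since $\mathrm{Im}\,Z>0$ as $t_1\to 0$), so a symmetric ``residue'' cannot be nilpotent unless it is zero. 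The nilpotency you are thinking of is that of $N=\log T$ acting on $H^1$ of the fiber via Gauss--Manin; it does not transfer to the Chern connection of the Hodge/special K\"ahler metric in the way you suggest. So the ``log-polynomial growth'' conclusion is not justified, and without a growth bound you cannot extend $\bar w_N$. The clean fix is exactly the paper's: convert the constant $g_{\rm SK}$-norm into an $L^2$ bound with respect to $g_B$ via the metric estimates and extend.
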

\begin{proof}
Since $w$ is parallel, its pointwise length $|w|^2_{\nu_t^*g_{\rm SK}}$ is constant on $\mathbb{P}^1\backslash \nu_t^{-1}(D)$.
Recall that from \eqref{sl} we have that
\begin{equation}\label{f1}
\omega_{\rm SK}\geq C^{-1}\omega_B,
\end{equation}
on $B^\circ$. Since $\nu_t^{-1}(D)$ is a finite subset of $\mathbb{P}^1$, we consider the extension problem of $w$ across each of these points, so let $y\in\nu_t^{-1}(D)$ be one of them.
Recall that $D$ is regular at the point $x=\nu_t(y)$, and we can choose local holomorphic coordinates $z_1,\dots z_n$ on a chart $U$ centered at $x$ such that $D\cap U=\{z_1=0\}$.
The volume form $\omega_{\rm SK}^n$ is given by a fiber integration as in \eqref{fibber}, and its asymptotic behavior near $D$ is studied in \cite[Theorem 2.1]{GTZ3} (see also \cite{BJ, GTZ2} for the case when $\dim B=1$  and \cite{Ki}, \cite{Taka2} for $\dim B$ arbitrary) where it is shown that
\begin{equation}\label{f2}
\omega_{\rm SK}^n\leq \frac{C}{|z_1|^{2(1-\gamma)}}(-\log|z_1|)^C \omega_B^n,
\end{equation}
on $U\cap \{z_1\neq 0\}$, for some $C>0$ and $\gamma\in (0,1]$. Combining \eqref{f1} and \eqref{f2} gives the crude bound
\begin{equation}\label{f3}
\omega_{\rm SK}\leq \frac{C}{|z_1|^{2(1-\gamma)}}(-\log|z_1|)^C \omega_B,
\end{equation}
see also \cite[(2.1) and Theorem 3.4]{TZ} and \cite[Theorem 1.1]{GTZ3} for sharper and more general such bounds.
Passing to the dual metric on $\Omega^1_B$ and pulling back via $\nu_t$, \eqref{f3} implies that on the punctured neighborhood $\nu_t^{-1}(U\cap\{z_1\neq 0\})$ of $y$ in $\mathbb{P}^1$ we have
$$|w|^2_{\nu_t^*g_{B}}\leq \frac{C}{|z_1|^{2(1-\gamma)}}(-\log|z_1|)^C|w|^2_{\nu_t^*g_{\rm SK}}=\frac{C'}{|z_1|^{2(1-\gamma)}}(-\log|z_1|)^C,$$
and from this we see that $|w|^2_{\nu_t^*g_{B}}$ is $L^1$ in $\nu_t^{-1}(U\cap\{z_1\neq 0\})$. Since $\nu_t^*\Omega^1_B$ is a trivial bundle over $\nu_t^{-1}(U)$, we can represent $w$ locally as an $n$-tuple of holomorphic functions on $\nu_t^{-1}(U\cap\{z_1\neq 0\})$, and since these functions are in $L^2$, they extend holomorphically across the point $y$ (see e.g. \cite[Proposition 1.14]{Oh}), which gives us the desired extension of $w$.
\end{proof}

At this point we have shown that  $\iota^*(\mathcal{P}(\nabla_V u_i))$ gives a holomorphic section $w\in H^0(\mathbb{P}^1,\nu_t^*\Omega^1_B)$. Recalling the splitting
\eqref{h6}, we see that $w$ must be a section of the factor $\mathcal{O}^{\oplus \ell}$, i.e. a section of $\mathcal{V}^\sharp\big|_{\mathcal{U}_t}$. Since it is also a section of $\mathcal{N}^\sharp\big|_{\mathcal{U}_t}$, it must be identically zero. This shows that $\iota^*(\mathcal{P}(\nabla_V u_i))=0$, and so $\iota^*(\nabla_V u_i)\in \mathcal{V}^\sharp\big|_{\mathcal{U}_t}$, and so \eqref{desire} is established.
This concludes the proof of Theorem \ref{parall} when $f$ is isotrivial.

\section{Obtaining a parallel \texorpdfstring{$(1,1)$}{(1,1)}-form and Hwang's Theorem}\label{se7}
In this section we show how to combine our main theorem \ref{mainn} with results of Voisin \cite{Vo}, Hwang \cite{Hw2,Hw} and Bakker-Schnell \cite{BS} to deduce Theorem \ref{main}. The key step is the following:
\begin{theorem}\label{form}
Suppose that $B\not\cong\mathbb{P}^n$. Then there is a nontrivial real $(1,1)$-form $\psi$ on $B^\circ$ with $\nabla^{\rm SK}\psi=0$ and $\psi$ not proportional to $\omega_{\rm SK}$.
\end{theorem}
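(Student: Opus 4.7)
The plan is to combine the $\nabla$-parallel decomposition $\mu^*TB^\circ=\mathcal{V}\oplus\mathcal{N}$ on $\mathcal{U}$ from Theorem \ref{parall} with the cubic-form identities of Corollary \ref{koro}, first to produce a $\nabla^{\rm SK}$-parallel real $(1,1)$-form on $\mathcal{U}$, and then to Galois-average down to $B^\circ$. Using the $g_{\rm SK}$-orthogonal splitting, I would define on $\mathcal{U}$ the real $(1,1)$-form $\omega_\mathcal{V}$ by $\omega_\mathcal{V}(X,Y) := \omega_{\rm SK}(X_\mathcal{V},Y_\mathcal{V})$, where $X_\mathcal{V},Y_\mathcal{V}$ denote the projections onto the $J$-invariant real subbundle underlying $\mathcal{V}\oplus\overline{\mathcal{V}}$. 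Since both $\mathcal{V}$ and $\mathcal{N}$ are $\nabla$-parallel (by Theorem \ref{parall} and the following remark) and $g_{\rm SK}$ is $\nabla$-parallel, the projection $\pi_\mathcal{V}$ is $\nabla$-parallel, hence so is $\omega_\mathcal{V}$; at every point of $\mathcal{U}$ it has complex rank $n-\ell<n$, and is therefore not proportional to $\mu^*\omega_{\rm SK}$.

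The crux of the argument is the upgrade from $\nabla$-parallelism to $\nabla^{\rm SK}$-parallelism. Freed's formula \eqref{freed} expresses the difference $A:=\nabla^{\rm SK}-\nabla$ as a tensor built from the cubic form $\Xi$: for $v\in T^{1,0}$, $A(v)$ is a $T^{0,1}$-valued $1$-form whose components are schematically of the form $g_{\rm SK}^{-1}\cdot\Xi(v,\cdot,\cdot)$ (after lowering indices and conjugating). Consequently, showing that $\mathcal{V}\oplus\overline{\mathcal{V}}$ is preserved by $\nabla^{\rm SK}$ reduces to the algebraic identity $\Xi(v,X,\zeta)=0$ for every $v\in\mathcal{V}$, $\zeta\in\mathcal{N}$, and $X\in T^{1,0}\mathcal{U}$. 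Decomposing $X=X_\mathcal{V}+X_\mathcal{N}$ and exploiting the full symmetry of $\Xi$, each resulting term is $\Xi$ evaluated with arguments in distinct summands $\mathcal{V},\mathcal{N}$, and so vanishes by Corollary \ref{koro}. The same reasoning preserves $\mathcal{N}\oplus\overline{\mathcal{N}}$, so the projection $\pi_\mathcal{V}$ is $\nabla^{\rm SK}$-parallel, and combined with $\nabla^{\rm SK}\omega_{\rm SK}=0$ this yields $\nabla^{\rm SK}\omega_\mathcal{V}=0$ on $\mathcal{U}$.

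Finally, passing to the Galois closure of $\mu:\mathcal{U}\to B^\circ$ if necessary, let $G$ denote the resulting finite deck group; since $\mu^*\nabla^{\rm SK}$ is pulled back from $B^\circ$, every $g\in G$ preserves it, so $\widetilde\psi := \tfrac{1}{|G|}\sum_{g\in G} g^*\omega_\mathcal{V}$ is $G$-invariant and $\nabla^{\rm SK}$-parallel, hence descends to a real $(1,1)$-form $\psi$ on $B^\circ$ with $\nabla^{\rm SK}\psi=0$. For non-proportionality, the pairing identity $\tfrac{n\,\psi\wedge\omega_{\rm SK}^{n-1}}{\omega_{\rm SK}^n}=n-\ell$ shows that $\psi=c\,\omega_{\rm SK}$ would force $c=(n-\ell)/n$ and the averaged projection to equal $\tfrac{n-\ell}{n}\,\mathrm{Id}$ at every point; I would rule this out using that each summand projection has an eigenvalue $1$ along the tangent direction to the corresponding minimal rational curve, while the family $\mathcal{K}$ supplies an $(n-1)$-parameter variation of such directions, inconsistent with a pointwise scalar identity. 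The main obstacle is the $\nabla^{\rm SK}$-parallelism step: carrying out Freed's formula precisely and confirming that every correction term in $(\nabla^{\rm SK}-\nabla)\omega_\mathcal{V}$ reduces to $\Xi$ evaluated on mixed $\mathcal{V}$--$\mathcal{N}$ arguments, which is exactly where Corollary \ref{koro} becomes decisive; the non-proportionality is a second delicate point, since Galois averaging of pointwise non-scalar projections can in principle symmetrize away the rank defect, and ruling this out genuinely uses the geometric content of the family of minimal rational curves rather than purely formal considerations.
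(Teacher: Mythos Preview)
Your construction of the form on $\mathcal{U}$ and the upgrade from $\nabla$-parallelism to $\nabla^{\rm SK}$-parallelism via Freed's formula \eqref{freed} and Corollary \ref{koro} is correct and matches the paper's argument (the paper projects onto $\mathcal{N}_{\mathbb{R}}$ rather than onto the real part of $\mathcal{V}$, but since the two forms sum to $\omega_{\rm SK}$ this is immaterial).

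The descent step, however, is where your proposal diverges from the paper and where there is a genuine gap. Your Galois-averaging argument does produce a $\nabla^{\rm SK}$-parallel form $\psi$ on $B^\circ$, but the non-proportionality claim is not established. The danger you flag is real: if at a point $x\in B^\circ$ the preimages in $\mathcal{U}$ give subspaces $\mathcal{V}_1,\dots,\mathcal{V}_d\subset T_xB^\circ$, then $\tfrac{1}{d}\sum_i\pi_{\mathcal{V}_i}$ can perfectly well equal $\tfrac{n-\ell}{n}\,\mathrm{Id}$. For instance with $n=2,\ \ell=1,\ d=2$ and $\mathcal{V}_1\perp\mathcal{V}_2$ one gets exactly $\tfrac12\,\mathrm{Id}$. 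Your proposed remedy, that each $\pi_{\mathcal{V}_i}$ has eigenvalue $1$ along the tangent direction $\alpha_i$ to the corresponding curve, only yields the constraint $\sum_{j\ne i}\pi_{\mathcal{V}_j}(\alpha_i)=(cd-1)\alpha_i$, which is not a contradiction; and appealing to ``an $(n-1)$-parameter variation'' in $\mathcal{K}$ does not help, since the construction has already restricted to the $(n-1)$-dimensional subfamily $\tilde{\mathcal{K}}$, through a generic point of which only finitely many curves pass.

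The paper avoids averaging altogether and instead proves that $\tilde\psi$ itself descends to $B^\circ$. The mechanism is topological: by Lemma \ref{kampen}, $\pi_1(B^\circ)$ is normally generated by loops $\gamma_j$ around the irreducible components $D_j$ of $D$. Since $b_2(B)=1$, every minimal rational curve $\nu$ meets each $D_j$, so each $\gamma_j$ can be realized \emph{inside} $\nu(\mathbb{P}^1)$. Along $\nu$, Theorem \ref{rigid}(d) gives a global $\nabla$-parallel orthonormal frame $u_1,\dots,u_\ell$ of $\mathcal{V}^\sharp|_\nu$ (these are global sections of a trivial bundle on $\mathbb{P}^1$), and $\tilde\psi|_\nu=i\sum_k u_k\wedge\overline{u_k}$ is therefore single-valued around $\gamma_j$. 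This yields $\sigma^*q^*\tilde\psi=q^*\tilde\psi$ for all $\sigma\in\pi_1(B^\circ)$ without any averaging, so the rank of $\psi$ stays equal to $\ell<n$ and non-proportionality is immediate. In effect the paper is showing that the subbundle $\mathcal{V}$ itself descends to $B^\circ$; this is strictly stronger than what averaging can deliver, and it is exactly what is needed.
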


First, we show that Theorem \ref{main} follows from this (we do not need to assume that $X$ is projective):
\begin{proof}[Proof of Theorem \ref{main}]
Suppose for a contradiction that $B\not\cong\mathbb{P}^n$. Then by Theorem \ref{form} the $2$-forms $\omega_{\rm SK}$ and $\psi$ on $B^\circ$ are both $\nabla^{\rm SK}$-parallel and not proportional, and thus they give us a $2$-dimensional space of global sections of the local system $R^2f_*\mathbb{R}_{X^\circ}$ over $B^\circ$.
However, as observed by Voisin \cite[Lemma 5.5]{Vo}, a result of Matsushita \cite{Ma5} together with Deligne's invariant cycles theorem show that this space of sections is always $1$-dimensional, a contradiction.
\end{proof}

Since $B\not\cong\mathbb{P}^n$, we know that $\mathcal{V}\subset \mu^*TB^\circ$ is a nontrivial proper holomorphic subbundle over $\mathcal{U}^\circ$, which by Theorem \ref{parall} is preserved by $\nabla$. As mentioned after Theorem \ref{parall}, the $g_{\rm SK}$-orthogonal complement $\mathcal{N}\subset \mu^*TB^\circ$ of $\mathcal{V}$ is also a nontrivial proper holomorphic subbundle over $\mathcal{U}^\circ$ which is preserved by $\nabla$.
Define real subbundles $\mathcal{V}_{\mathbb{R}},\mathcal{N}_{\mathbb{R}}$ of $\mu^*T^{\mathbb{R}}B^\circ$ over $\mathcal{U}^\circ$ by
$$\mathcal{V}_{\mathbb{R}}=\{v+\ov{v}\ |\ v\in\mathcal{V}\}\subset \mu^*T^{\mathbb{R}}B^\circ,$$
and analogously for $\mathcal{N}_{\mathbb{R}}$. The bundle $\mathcal{V}_{\mathbb{R}}$ is isomorphic to $\mathcal{V}$ via the usual inverse map $T^{\mathbb{R}}B\to TB$ given by $u\mapsto \frac{u-iJ(u)}{2}$ (and similarly for $\mathcal{N}_{\mathbb{R}}$), and on $\mathcal{U}^\circ$ we have a splitting
\begin{equation}\label{splitta}
\mu^*T^{\mathbb{R}}B^\circ=\mathcal{V}_{\mathbb{R}}\oplus \mathcal{N}_{\mathbb{R}}.
\end{equation}

Consider now the Stein factorization of $\mu:\mathcal{U}\to B$, given by
$$\mathcal{U}\to Z\overset{p}{\to} B,$$
where $\mathcal{U}\to Z$ has connected fibers and $p:Z\to B$ is finite. Define also $Z^\circ:=p^{-1}(B^\circ)$. To complete the proof of Theorem \ref{form}, we will then need the following theorem which is implicit in the work of Hwang \cite{Hw}, and also appears in the recent work of Bakker-Schnell (\cite[Proposition 3.2 and proof of Theorem 1.1]{BS}) relying on ideas of Hwang \cite{Hw2,Hw}:

\begin{theorem}\label{bs} Suppose the splitting \eqref{splitta} is preserved by $\nabla^{\rm SK}$, then $p:Z\to B$ is an isomorphism.
\end{theorem}

We can now give the proof of Theorem \ref{form}:

\begin{proof}[Proof of Theorem \ref{form}]
Since $B\not\cong\mathbb{P}^n$, we have the nontrivial splitting \eqref{splitta}. By definition, $\mathcal{V}_{\mathbb{R}}$ is preserved by $J$, and since $\mathcal{V}$ is preserved by $\nabla$ (and $\nabla J=0$), it follows that $\mathcal{V}_{\mathbb{R}}$ is also preserved by $\nabla$.

We claim that $\mathcal{V}_{\mathbb{R}}$ is preserved by $\nabla^{\rm SK}$. To see this, recall that Freed shows in \cite[(1.29)]{Fr} that the special K\"ahler connection on $T^{\mathbb{R}}B$ is given by
\begin{equation}\label{freed}
\nabla^{\rm SK}=\nabla+A+\ov{A},
\end{equation}
where as usual $\nabla$ is the Levi-Civita connection of $g_{\rm SK}$ and
$A\in \Lambda^{1,0}\mathrm{Hom}(TB^\circ,\ov{TB^\circ})$ is given by
\begin{equation}\label{indiana}
A_{ij}^{\ov{\ell}}=\sqrt{-1}g_{\rm SK}^{k\ov{\ell}}\Xi_{ijk},\end{equation}
and the same holds for the pullback connection on $\mathcal{U}^\circ$.
Given a local section $\alpha$ of $\mathcal{V}$ and a local $(1,0)$ vector field $v\in T\mathcal{U}$, we wish to show that
$$\nabla^{\rm SK}_{v+\ov{v}} (\alpha+\ov{\alpha})\in\mathcal{V}_{\mathbb{R}}.$$
Since we know that $\nabla_{v+\ov{v}} (\alpha+\ov{\alpha})\in\mathcal{V}_{\mathbb{R}}$, it suffices to check that
$$(A+\ov{A})_{v+\ov{v}}(\alpha+\ov{\alpha})=A_v(\alpha)+\ov{A_v(\alpha)}\in\mathcal{V}_{\mathbb{R}},$$
and so it suffices to see that
$$A_v(\alpha)\in \ov{\mathcal{V}},$$
or equivalently that $g_{\rm SK}(A_v(\alpha), \ov{\zeta})=0$ for all local sections $\zeta$ of $\mathcal{N}$. But from \eqref{indiana} we see that
$$g_{\rm SK}(A_v(\alpha),\ov{\zeta})=\sqrt{-1} \Xi(v,\alpha,\zeta),$$
which vanishes by Theorem \ref{rigid} (c). This concludes the proof that $\mathcal{V}_{\mathbb{R}}$ is preserved by $\nabla^{\rm SK}$.  An analogous argument shows that
$\mathcal{N}_{\mathbb{R}}$ is also preserved by $\nabla^{\rm SK}$, and so the splitting \eqref{splitta} is preserved by $\nabla^{\rm SK}$. Applying Theorem \ref{bs} we see that $p:Z^\circ\to B^\circ$ is an isomorphism, so we may assume that $\mu:\mathcal{U}^\circ\to B^\circ$ has connected fibers. The vector bundle $\mu^*T^{\mathbb{R}}B^\circ$ is trivial when restricted to these fibers, and its subbundles $\mathcal{V}_{\mathbb{R}},\mathcal{N}_{\mathbb{R}}$ restricted to a fiber are preserved by the pullback connection $\nabla^{\rm SK}$ (which when restricted to the fiber is a trivial connection), and so $\mathcal{V}_{\mathbb{R}}$ and $\mathcal{N}_{\mathbb{R}}$ are pullbacks of vector bundles on $B^\circ$ (denoted by the same notation), which are subbundles of $T^{\mathbb{R}}B^\circ$ and are still preserved by $\nabla^{\rm SK}$.

We then define a $(1,1)$-form $\psi$ on $B^\circ$ by projecting $\omega_{\rm SK}$ onto $\mathcal{V}_{\mathbb{R}}$. Since $\nabla^{\rm SK}\omega_{\rm SK}=0$ and $\mathcal{V}_{\mathbb{R}}$ is preserved by $\nabla^{\rm SK}$, it follows that $\nabla^{\rm SK}\psi=0$ (and also $\nabla\psi=0$ for the same reason), and since $\mathcal{V}_{\mathbb{R}}$ is a nontrivial proper subbundle of $T^{\mathbb{R}}B^\circ$, we see that $\psi$ is nonzero and not proportional to $\omega_{\rm SK}$, and we are done.
\end{proof}

\section{Comments about the case when \texorpdfstring{$B$}{B} is singular}\label{se8}
It is tempting to ask whether our method can be used to prove that $B\cong\mathbb{P}^n$ even when $B$ is singular. As mentioned in the Introduction, this is currently known only for $n\leq 2$ \cite{BK,HX,Ou}.
In general, it is known that $B$ is a normal projective variety, with at worst klt singularities, which is Fano with Picard number one. The natural generalization of our approach (following \cite{Si}, who generalized Mori's Theorem \cite{Mor} to the singular setting) would be to consider a functorial resolution of singularities $\pi:\ti{B}\to B$ and to show that we must have $\ti{B}\cong\mathbb{P}^n$, which forces $B\cong\mathbb{P}^n$ as well. In this setting, $\ti{B}$ is a uniruled projective manifold and $\ti{D}=\pi^{-1}(D)$ is a divisor, so many of our arguments above can be repeated on $\ti{B}^\circ:=\ti{B}\backslash\ti{D}$, which carries a special K\"ahler metric $\omega_{\rm SK}$. The fact that $\pi$ is functorial gives us a morphism $\mu:\pi^*TB\to T\ti{B}$ which is an isomorphism on $\ti{B}^\circ$, where $TB=\mathrm{Hom}(\Omega^1_B,\mathcal{O}_B)$ is the reflexive tangent sheaf.
Given a rational curve $\nu:\mathbb{P}^1\to\ti{B}$, which is not contained in $\ti{D}$, pulling back $\mu$ via $\nu$ we obtain a sheaf injection $$\mathcal{A}:=(\pi\circ\nu)^{[*]}TB\to \nu^* T\ti{B},$$
between these vector bundles on $\mathbb{P}^1$ (which both split as a direct sum of line bundles which should have nonnegative degrees). Here we use the standard reflexive pullback notation $(\pi\circ\nu)^{[*]}TB:=(\nu^*\pi^*TB)^{**}$. Using Theorem \ref{unir}, if $\ti{B}\not\cong\mathbb{P}^n$ then
$\nu^* T\ti{B}$ contains a nontrivial $\mathcal{O}$ factor, hence so does $\mathcal{A}$. To implement our strategy, one would need a rigidity statement like in Theorem \ref{rigid} for either one of these trivial summands, and a crucial ingredient of the proof of the rigidity statement is that sections of the dual of the relevant bundle should have bounded norm (with respect to the pullback of $\omega_{\rm SK}$). The first fundamental issue is that  it is not clear to us how to show that sections of  $\nu^*\Omega^1_{\ti{B}}$ or of $\mathcal{A}^*$ have bounded norm. The key ingredient for this when $B$ is smooth was the estimate \eqref{sl}, but when $B$ is singular this by itself is not sufficient to prove boundedness.

What can be shown using results in \cite{GKP} is rather that sections of the reflexive pullback $(\pi\circ\nu)^{[*]}\Omega^{[1]}_B$ have bounded norm, but in general the Grothendieck decomposition of this vector bundle is different from those of $\nu^*\Omega^1_{\ti{B}}$ and $\mathcal{A}^*$, and it may happen that these have some nontrivial $\mathcal{O}$ factor but $(\pi\circ\nu)^{[*]}\Omega^{[1]}_B$ does not, which invalidates our approach. This undesirable phenomenon can only happen when the generic rational curve (of the type that we are considering) when projected down to $B$ always passes through some singular point of $B$. This however seems unavoidable in general, as finding low-degree rational curves in normal Fano varieties that can be deformed to avoid the singularities is a very delicate problem in algebraic geometry, see e.g. \cite{KM,Kol,Xu}.

\end{document}